\documentclass[11pt, reqno]{amsart}

\usepackage{cite}
\usepackage[english]{babel}
\usepackage[T1]{fontenc}
\usepackage{csquotes}
\usepackage{amsmath, amssymb, amsthm}
\usepackage{enumitem}
\usepackage{aligned-overset}
\usepackage{xifthen}
\usepackage{color}
\definecolor{MyLinkColor}{rgb}{0,0,0.4}

\newlength{\heightofnormal}
\newlength{\heightofinput}
\settoheight{\heightofnormal}{$\pi_j^p$}

\newcommand{\set}[1]{\left\{#1\right\}}

\newcommand{\p}[1]{
    \settoheight{\heightofinput}{$#1$}
    \ifdim\heightofinput > \heightofnormal
        \left(#1\right)
    \else
        (#1)
    \fi
}
\newcommand{\sqp}[1]{
    \settoheight{\heightofinput}{$#1$}
    \ifdim\heightofinput > \heightofnormal
        \left[#1\right]
    \else
        [#1]
    \fi
}
\newcommand{\norm}[1]{
	\settoheight{\heightofinput}{$#1$}
    \ifdim\heightofinput > \heightofnormal
        \left\lVert #1\right\rVert
    \else
        \lVert #1\rVert
    \fi
}
\newcommand{\abs}[1]{
	\settoheight{\heightofinput}{$#1$}
    \ifdim\heightofinput > \heightofnormal
        \left\lvert #1\right\rvert
    \else
        |#1|
    \fi
}

\newcommand{\dx}[1]{\,\mathrm{d}#1}
\newcommand{\dfx}[2]{\frac{\mathrm{d}#1}{\mathrm{d}#2}}
\newcommand{\Lp}[2]{\mathrm{L}^{#1}(#2)}
\newcommand{\Hp}[2]{\mathrm{H}^{#1}(#2)}
\newcommand{\Wp}[2]{\mathrm{W}^{#1,2}(#2)}
\newcommand{\Wpp}[3]{\mathrm{W}^{#1,#2}(#3)}
\newcommand{\nWpp}[2]{\mathrm{W}^{#1,#2}}
\newcommand{\nWp}[1]{\mathrm{W}^{#1,2}}

\newcommand{\nHp}[1]{\mathrm{H}^{#1}}
\newcommand{\hHp}[2]{\widehat{\mathrm{H}}\vphantom{\mathrm{H}}^{#1}(#2)}

\newcommand{\PV}{\mathrm{PV}\!}
\newcommand{\Rel}{\mathrm{Re}\,}

\newcommand{\Aa}{\mathbb{A}}
\newcommand{\ve}{\varepsilon}
\newcommand{\RR}{\mathbb{R}}
\newcommand{\CC}{\mathbb{C}}
\newcommand{\NN}{\mathbb{N}}
\newcommand{\ZZ}{\mathbb{Z}}

\newcommand{\Ss}{\mathbb{S}}

\newcommand{\bfa}{\mathbf{a}}
\newcommand{\bfb}{\mathbf{b}}
\newcommand{\bfc}{\mathbf{c}}
\newcommand{\bfd}{\mathbf{d}}

\newcommand{\mcD}{\mathcal{D}}

\newcommand{\mcH}{\mathcal{H}}
\newcommand{\mcL}{\mathcal{L}}

\newcommand{\mcP}{\mathcal{P}}

\newcommand{\mcU}{\mathcal{U}}

\newcommand{\rmC}{\mathrm{C}}

\newcommand{\sfv}{\mathsf{v}}
\newcommand{\sfq}{\mathsf{q}}
\newcommand{\wt}{\widetilde}
\newcommand{\tnu}{{\tilde\nu}}
\newcommand{\tkappa}{{\tilde\kappa}}

\newcommand{\ts}[1]{t_{[#1]}}
\newcommand{\tss}{\ts{s}}
\newcommand{\T}[1]{T_{[#1]}}
\newcommand{\tx}{\ts{x_1}}
\newcommand{\Tx}{\T{x_2}}
\newcommand{\Tf}[2]{T_{[#1]}#2}
\newcommand{\dg}[2]{\delta_{[#1]}#2}
\newcommand{\dxsf}{\dg{\xi,s}{f}}
\newcommand{\Txsf}{\Tf{\xi,s}{f}}
\newcommand{\id}{\mathrm{id}}

\DeclareMathOperator*{\supp}{supp}
\DeclareMathOperator*{\vdiv}{div}

\newcommand{\zd}{.\kern-\nulldelimiterspace}

\textwidth15.25cm
\oddsidemargin+0.7cm
\evensidemargin+0.7cm

\newtheorem{Theorem}{Theorem}[section]
\newtheorem{Proposition}[Theorem]{Proposition}
\newtheorem{Lemma}[Theorem]{Lemma}
\newtheorem{Corollary}[Theorem]{Corollary}
\theoremstyle{remark} 
\newtheorem{Remark}[Theorem]{Remark}

\numberwithin{equation}{section}
\usepackage[colorlinks=true,linkcolor=MyLinkColor,citecolor=MyLinkColor]{hyperref}

\begin{document}
\title[Well-posedness and stability for the two-phase periodic quasistationary Stokes flow]{Well-posedness and stability for the two-phase periodic quasistationary Stokes flow}
\thanks{Partially supported by DFG Research Training Group~2339 ``Interfaces, Complex Structures, and Singular Limits in Continuum Mechanics - Analysis and Numerics''}
\author{Daniel B\"ohme}
\author{Bogdan-Vasile Matioc}
\address{Fakult\"at f\"ur Mathematik, Universit\"at Regensburg \\ D--93040 Regensburg, Deutschland}
\email{daniel.boehme@ur.de}
\email{bogdan.matioc@ur.de}

\begin{abstract}
 The two-phase horizontally periodic quasistationary Stokes flow in $\mathbb{R}^2$, describing the motion of two immiscible fluids with equal viscosities that are separated 
by a sharp interface, which is parameterized as the graph of a function $f=f(t)$,  is considered in the general case when both gravity and surface tension effects are included.
Using potential theory, the moving boundary problem is formulated as a fully nonlinear and nonlocal parabolic problem for the function~$f$.  
Based on abstract parabolic theory, it is proven that the problem is well-posed in all subcritical spaces~$\mathrm{H}^r(\mathbb{S})$, $r\in(3/2,2)$.
Moreover, the stability properties of the flat equilibria are analyzed in dependence on the physical properties of the fluids.
\end{abstract}

\subjclass[2020]{31A10; 35B35; 35B65; 35K55; 76D07}
\keywords{Periodic Stokes flow; Well-posedness; Stability; Gravity; Surface tension}

\maketitle

\pagestyle{myheadings}
\markboth{\sc{D. B\"ohme \& B.-V.~Matioc}}{\sc{Well-posedness and stability for the two-phase periodic quasistationary Stokes flow}}

\section{Introduction}\label{Sec:Introduction}
		We consider the two-phase horizontally periodic quasistationary Stokes flow driven by surface tension and gravity effects, which is modeled by the system
	\begin{subequations}\label{eq:STOKES}
	\begin{equation}\label{eq:Stokes}
		\left.
		\arraycolsep=1.4pt
		\begin{array}{rclll}
		\mu\Delta v^\pm-\nabla q^\pm&=&0&\mbox{in $\Omega^\pm(t)$,}\\
		\vdiv v^\pm&=&0&\mbox{in $\Omega^\pm(t)$,}\\{}
		[v]&=&0&\mbox{on $\Gamma(t)$,}\\{}
		[T_\mu (v,q)]\tnu&=&(\Theta x_2-\sigma\tkappa) \tnu&\mbox{on $\Gamma(t)$,}\\
		(v^\pm,q^\pm)(x)&\to&(\pm 	c_{1,\Gamma},0,\pm 	c_{2,\Gamma})&\mbox{for $x_2\to\pm\infty$,}\\
		V_n&=&v\cdot\tnu&\mbox{on $\Gamma(t)$}
		\end{array}\right\}
	\end{equation}
	for $t>0$.
	We assume that $\Gamma(t)$ is the graph of a function $f(t)$ that separates the   two fluid domains  
	\begin{equation*}
		\Omega^\pm (t):= \{x=(x_1 ,x_2)\in\Ss\times\RR: x_2 \gtrless f(t,x_1)\},\qquad t>0.
	\end{equation*}
	We denote by $\Ss:=\RR/2\pi\ZZ$ the unit circle, functions that depend on the real variable $\xi\in\Ss$ being $2\pi$-periodic with respect to it. 
	In particular, the unknown $(f,v^\pm, q^\pm)$ is assumed to be~$2\pi$-periodic with respect to the horizontal variable $x_{1}$.
	 At time $t=0$ we impose the initial condition
	\begin{equation}\label{eq:ic}
		f(0)=f_0.
	\end{equation}	
Moreover, the constants $	c_{1,\Gamma}$ and $	c_{2,\Gamma}$ evolve over time and are identified by $f=f(t)$ and the  other constants in \eqref{eq:Stokes}  according to 
	\begin{equation}\label{eq:c_inf}
	c_{1,\Gamma}:=-\frac{\sigma}{2\mu}\Big\langle\frac{f'}{(1+f'^2)^{1/2}}\Big\rangle\qquad\text{and}\qquad 	c_{2,\Gamma}:=-\frac{\Theta}{2}\langle f\rangle,\qquad t>0,
	\end{equation}
	\end{subequations}
	 where 
	\[
	\langle h \rangle :=\frac{1}{2\pi}\int_{-\pi}^\pi h(s) \dx{s}
	\]
	 denotes the integral mean of  an  integrable function $h:\Ss\to\RR$ and 
	\begin{equation}\label{eq:defTheta}
	\Theta:=g(\rho^--\rho^+)\in\RR.
	\end{equation}
	 The equation set \eqref{eq:STOKES} describes the dynamics of two incompressible and immiscible Newtonian fluids with equal viscosities,
	 the positive constants $\sigma$ and $\mu$ representing the surface tension coefficient at the interface and the viscosity of the fluids, respectively. 
	 The constant~$g{\geq 0}$ is the   Earth's gravity and $\rho^\pm$ stands for the density of the fluid occupying $\Omega^\pm$. 
	Moreover,~$\tnu$ is the unit exterior normal to $\partial\Omega^-$ and  $\tkappa$ is the curvature of the moving interface.
We further denote by~${v^\pm =v^\pm (t):\Omega^\pm (t)\to\RR^2}$  the velocity field  in the fluid domain~$\Omega^\pm (t)$  and~$q^\pm =q^\pm (t): \Omega^\pm(t)\to \RR$ is defined by
\[
q^\pm(t,x)=p^\pm(t,x)+g\rho^\pm x_2,\qquad x=(x_1,x_2)\in \Omega^\pm (t),
\] 
where $p^\pm=p^\pm(t)$ is the pressure in $\Omega^\pm (t)$.
The stress tensor $T_{\mu}(v^\pm,q^\pm)$  is given by
	\begin{equation}\label{eq:stress}
		T_{\mu}(v^\pm,q^\pm):= -q^\pm I_2 +\mu \big(\nabla v^\pm +(\nabla v^\pm)^\top\big),\qquad (\nabla v^\pm)_{ij} := \partial_j v^\pm_i,\quad i,\,j=1,\, 2,
	\end{equation}
	while $[v]$ and  $[T_{\mu}(v,q)]$ denote the jump of the velocity and of the stress tensor across the moving interface, respectively, as defined in \eqref{eq:defjump} below. 
	Finally, $V_n$ is the normal velocity of the interface $\Gamma(t)$, $x\cdot y$ denotes the Euclidean scalar product of two vectors $x,\,y\in\RR^n$, and~$I_2\in\RR^{2\times 2}$ is the identity matrix.

First studies of the quasistationary Stokes flow investigated the setting of  a single fluid phase which occupies a sufficiently smooth domain~$\Omega(t)\subset\RR^d$, $d\geq 2$. 
In \cite{PG97}, the authors have established the well-posedness of the problem for  initial data which are close to a smooth and strictly star-shaped domain, together with the exponential stability of balls.
Subsequently, the exponential stability of balls has been proven in \cite{Fr02a, Fr02b} in space dimension~${d\in\{2,\, 3\}}$ by a different power series argument.
Furthermore, in  the confined  setting, the  quasistationary Stokes flow is the singular limit of the  Navier-Stokes problem  when the Reynolds number vanishes, cf. \cite{S99, Solo99}. 

The two-phase  quasistationary Stokes flow in a bounded geometry, with one of the fluid phases surrounding the other one and  with possible phase transitions, has been considered in arbitrary  space dimension 
 $d\geq2$ in the monograph~\cite{PS16} on the basis of maximal regularity in weighted $\mathrm{L}^p$-spaces.

In the absence of gravity effects,   the  nonperiodic version of the  quasistationary Stokes flow~\eqref{eq:STOKES} -- with equal and general viscosities -- has been investigated   quite recently in~\cite{Matioc.2021, MP2022},
 the local well-posedness  property being provided  in $\Hp{r}{\RR}$, with~${r\in(3/2,2)}$  arbitrarily close to the critical exponent~${r=3/2}$, cf. \cite[Remark~1.2]{MP2022}.
 Moreover, as shown in \cite{MP2023}, the unconfined one-phase flow is the singular limit of the two-phase problem when the viscosity of one of the fluids vanishes.

In the absence of surface tension effects, that is when $\sigma=0$, the problem \eqref{eq:STOKES}  has been  analyzed in \cite{GGS23x, GGS24x}.
If the interface between the fluids is parameterized as an arbitrary curve,  the problem can be reformulated as an ODE and local well-posedness is established by using Picard's theorem \cite{GGS23x}.
In \cite{GGS24x}, the authors   showed  that the problem is actually globally well-posed.
Global existence results of solutions in the graph case are provided in \cite{GGS23x} for initial data that are small in $\Hp{3}{\Ss}$ or in certain Wiener algebras in the stable regime when~$\Theta>0$.
In fact, the solutions to \eqref{eq:STOKES} in the case $\sigma=0$ also  solve the   transport Stokes system, see  \cite{H18, Me19, MS22, Leb21x, Gr23}, 
which is a model for the  settling process of a cluster of rigid particles in a viscous fluid. 

Lastly, we mention  the related Peskin problem  which models the evolution of an  elastic  string  (or membrane) immersed in a viscous fluid. 
In this context, the equations governing the motion in the fluid match those in equation \eqref{eq:STOKES} (with~$g=0$), while the dynamics of the elastic string are 
described using Lagrangian coordinates, see the  very recent research in~\cite{LT19, MRS19, CS24, CN23, GMS23, GGS23, ES23xx, EKMS23xx}.

In this paper, we consider the horizontally periodic quasistationary Stokes flow \eqref{eq:STOKES} with both gravity (neglected if $g=0$) and surface tension effects incorporated -- 
which could not be achieved in the nonperiodic case~\cite{Matioc.2021, MP2022} and was neither investigated in \cite{GGS23x, GGS24x} -- in Sobolev spaces
 $\Hp{r}{\Ss}$ with~${r\in(3/2,2)}$ arbitrary (again $r=3/2$ is the critical exponent). 
	A striking difference to the nonperiodic case  ~\cite{Matioc.2021, MP2022} is the far field boundary condition~\eqref{eq:Stokes}$_5$ for~$(v,q)$.
	While in the nonperiodic case~$(v,q) $ vanishes at infinity,  under the periodicity assumption~$(v,q)$ converges, at each fixed time $t$,
	 towards a constant vector   explicitly determined by~$f(t)$, cf.~\eqref{eq:c_inf}. 
	In particular,  for $x_2\to\pm\infty$, the velocity is asymptotically  horizontal, but the asymptotic profiles at $\pm\infty$ are opposite to each other. 
	Moreover, for~$x_2\to\pm \infty $, the pressure may deviate from the hydrostatic pressure by some constant which is determined by $f(t)$ and which has opposite sign at $\pm\infty$.
	
Our strategy to solve \eqref{eq:STOKES} is to  prove that $(v^\pm,q^\pm)$ is determined at each time instant~$t>0$ by~$f(t)$,  provided~$f(t)\in \Hp{3}{\Ss}$, see Remark~\ref{R:FTP} and Theorem~\ref{Thm:FT_unique}.
In this way we may reformulate \eqref{eq:STOKES} as a  fully nonlinear and nonlocal problem for~$f$, see \eqref{eq:ev_eq}, with nonlinearities expressed by (singular) integral operators involving $f$, 
which are well-defined when merely~${f \in \Hp{r}{\Ss}}$ with ${r>3/2}$. 
 The fully nonlinear character of \eqref{eq:ev_eq} is due to the fact that the phase space~$\Hp{r}{\Ss}$ can be chosen arbitrarily close to the critical space, meaning that the curvature has to be interpreted as a distribution. 
The situation is different  in
 \cite{Fr02a, Fr02b, PS16, S99, Solo99, PG97} where the interface is at least of class ${\rm C}^2$ and, due to quasilinearity of the curvature operator, the Stokes problem~\eqref{eq:STOKES}
 may be formulated as a quasilinear evolution problem.
Using results on certain families of singular integral operators   from Appendix~\ref{Sec:A} and Appendix~\ref{Sec:B} -- which might be  of interest also in the context of other evolution problems (such as the Hele-Shaw, Muskat, or Mullins-Sekerka problems) --
 we then prove that the new formulation~\eqref{eq:ev_eq} is of parabolic type and provide its well-posedness by using abstract parabolic theory from~\cite{Lunardi.1995}, cf. Theorem~\ref{Thm:main} below.
 Additionally, we  show  a parabolic smoothing property  for~\eqref{eq:ev_eq}, which justifies the assumption~${f(t)\in \Hp{3}{\Ss}}$  in Theorem~\ref{Thm:FT_unique}.
 	
 \pagebreak
 
Our  first main  result  is the following  theorem.

	\begin{Theorem}\label{Thm:main}
		Let $\Theta\in\RR$, $\mu,\, \sigma\in(0,\infty)$, and $r\in(3/2,2)$.  
        \begin{enumerate}[label=\textup{(\roman*)}]
            \item \textup{(Well-posedness)} Given $f_0\in\Hp{r}{\Ss}$, there exists a unique maximal solution $(f,v^\pm\!,q^\pm)$ to~\eqref{eq:STOKES} such that 
            \begin{align*}
                f=f(\cdot, f_0)\in\rmC([0,T_+),\Hp{r}{\Ss})\cap\rmC^1([0,T_+),\Hp{r-1}{\Ss})
            \end{align*}
            and 
            \begin{equation*} 
              \left.          
            \begin{array}{lll}
            &f(t)\in \Hp{3}{\Ss}, \\
                & v^{\pm}(t)\in \mathrm{C}^{2}(\Omega^\pm(t),\RR^2)\cap \mathrm{C}^1(\overline{\Omega^\pm(t)},\RR^2),\\
                & q^\pm(t) \in \mathrm{C}^1(\Omega^\pm(t))\cap \mathrm{C}(\overline{\Omega^\pm(t)})
            \end{array}
            \right\}\qquad\text{for all $t\in(0,T_+)$,}
            \end{equation*}
            where $T_+=T_+(f_0)\in (0,\infty]$.\\[-2ex]
            \item \textup{(Parabolic smoothing)} We have $[(t,\xi)\mapsto f(t,\xi)]\in \rmC^\infty((0,T_+)\times\Ss,\RR).$\\[-2ex]
            \item \textup{(Global existence)} The solution is global, that is  $T_+(f_0)=\infty,$ if for each $T>0$ we have 
            \begin{equation*}
                \sup_{[0,T]\cap[0,T_+(f_0))}\norm{f(t)}_{\nHp{r}}<\infty.
            \end{equation*} 
        \end{enumerate}
	\end{Theorem}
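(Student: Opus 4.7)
The plan is to lean on the reformulation \eqref{eq:ev_eq} established in the previous sections. Using Theorem~\ref{Thm:FT_unique} and Remark~\ref{R:FTP}, for sufficiently regular $f$ the velocity and pressure $(v^\pm,q^\pm)$ in the bulk are uniquely reconstructed from $f$ via layer potentials, and inserting the trace of $v$ into the kinematic condition $V_n = v\cdot\tnu$ recasts~\eqref{eq:STOKES} as an abstract Cauchy problem
\begin{equation*}
    f'(t) = \Phi(f(t)),\qquad f(0)=f_0,
\end{equation*}
with $\Phi\in \rmC^\omega(\Hp{r}{\Ss},\Hp{r-1}{\Ss})$ built out of the singular integral operators studied in Appendix~\ref{Sec:A} and Appendix~\ref{Sec:B}; the analyticity comes from the smooth dependence of these operators on the shape function. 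With $\Phi$ in hand, assertion (i) reduces to checking the hypotheses of the fully nonlinear parabolic theory of \cite[Theorem~8.1.1]{Lunardi.1995}, the crucial one being that for each $f_0\in \Hp{r}{\Ss}$ the Fr\'echet derivative satisfies $-\partial\Phi(f_0)\in\mcH(\Hp{r}{\Ss},\Hp{r-1}{\Ss})$, i.e.\ generates an analytic semigroup on $\Hp{r-1}{\Ss}$.

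This generation property is the main obstacle. I would split $\partial\Phi(f_0) = A(f_0) + R(f_0)$, where $A(f_0)$ isolates the principal part coming from the linearization of the curvature term (the gravity contribution is at most of first order and enters $R(f_0)$ as a lower-order perturbation). Differentiating the nonlinear singular integrals from the reformulation should show that $A(f_0)$ is, up to relatively bounded remainders, a third-order nonlocal operator whose Fourier symbol behaves like $-\alpha(\xi)\abs{k}^3$ on modes $k\in\ZZ$, with strictly positive coefficient $\alpha(\xi)\sim (\sigma/4\mu)(1+f_0'(\xi)^2)^{-3/2}$. I would then apply a partition-of-unity and freezing-of-coefficients argument on $\Ss$: the commutators produced by this localization are controlled by the mapping properties of the singular integral families established in the appendices, and the resulting constant-coefficient model operators $-\alpha_0\abs{D}^3$ with $\alpha_0>0$ are manifestly sectorial on $\Hp{r-1}{\Ss}$. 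Perturbation combined with interpolation then upgrades the model estimate to the generator property for $\partial\Phi(f_0)$. Well-posedness in (i) follows, and the additional smoothness of $(v^\pm,q^\pm)$ claimed there is obtained \emph{a posteriori} once one knows $f(t)\in\Hp{3}{\Ss}$ for $t>0$, which is part of~(ii).

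For the parabolic smoothing (ii) the strategy is the parameter trick. For $(\lambda,\nu)$ near $(1,0)\in\RR^2$ the shifted-and-rescaled function $f_{\lambda,\nu}(t,\xi):=f(\lambda t,\xi+\nu t)$ satisfies a perturbed fully nonlinear equation with parabolic linearization at $(1,0)$; an implicit function theorem argument in a suitable space of parabolic H\"older type (cf.\ the framework of \cite{Lunardi.1995}) yields $\rmC^1$-dependence of $f_{\lambda,\nu}$ on $(\lambda,\nu)$, which upgrades the regularity of $f$ in both $t$ and $\xi$ on $(0,T_+)\times\Ss$. Iterating this scheme gives $f\in \rmC^\infty((0,T_+)\times\Ss,\RR)$. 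Finally, assertion (iii) is a direct consequence of the maximality statement in~\cite[Ch.~8]{Lunardi.1995}: if $T_+(f_0)<\infty$, then $\norm{f(t)}_{\nHp{r}}$ must blow up as $t\nearrow T_+(f_0)$, which is exactly the contrapositive of the criterion claimed.
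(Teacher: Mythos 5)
Your overall scaffolding matches the paper: reconstruct $(v^\pm,q^\pm)$ from $f$ via the periodic Stokeslet and the fixed-time solvability (Theorem~\ref{Thm:FT_unique}, Remark~\ref{R:FTP}), recast~\eqref{eq:STOKES} as a fully nonlinear abstract Cauchy problem~\eqref{eq:ev_eq} with smooth right-hand side (this is~\eqref{eq:PSI_smooth}), establish the generator property $-\partial\Psi(f_0)\in\mcH(\Hp{r}{\Ss},\Hp{r-1}{\Ss})$ by localization and freezing of coefficients (Proposition~\ref{Prop:loc} and Proposition~\ref{Prop:PSI_Gen}), then invoke \cite[Chapter~8]{Lunardi.1995} for (i) and (iii); the parameter trick for~(ii) is also the standard move referenced to the nonperiodic case.

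However, there is a genuine error in your identification of the principal part. You claim the leading operator is third order with symbol $\sim -\alpha(\xi)|k|^3$ and coefficient $\alpha\sim(\sigma/4\mu)(1+f_0'^2)^{-3/2}$. This is wrong, and in fact inconsistent with the very function-space setting you adopt: the Cauchy problem posed with phase space $\Hp{r}{\Ss}$ and ambient space $\Hp{r-1}{\Ss}$ corresponds to an operator losing exactly one derivative. The paper's local model operators in Proposition~\ref{Prop:loc} are first order,
\begin{equation*}
\Aa_{j,\tau}= -\alpha_{\tau}(x_j^\ve)\p{-\frac{\dx{}^2}{\dx{\xi}^2}}^{1/2}+\beta_\tau (x_j^\ve)\frac{\dx{}}{\dx{\xi}},
\qquad
\alpha_\tau=\frac{\sigma}{4\mu}\,(1+(\tau f_0')^2)^{-1/2},
\end{equation*}
see~\eqref{alphabeta}, with limiting operator $\Phi(0)=-\tfrac{\sigma}{4\mu}(-d^2/d\xi^2)^{1/2}$ at $\tau=0$ in~\eqref{eq:Phi0}. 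The mechanism is that the curvature term, a priori second order, is smoothed by one full order through the layer-potential representation of the velocity (essentially the $B_0$ and $B_{n,m}^{p,q}$ operators in Appendix~\ref{Sec:A}, which map $\Hp{r-1}$ back into $\Hp{r}$ when $p\geq1$), so what survives as the principal symbol is a first-order Fourier multiplier $-\alpha|k|+i\beta k$. Also the exponent in the coefficient is $-1/2$ (from $\omega^{-1}$), not $-3/2$. If you run your freezing argument with a third-order model, the commutator estimates and the interpolation step will not close in $\Hp{r}\to\Hp{r-1}$; you must first compute $\partial\Psi(f_0)$ explicitly and use the algebra of the singular integral operators to extract a first-order principal part before localizing.

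Two minor points: you assert $\Phi\in\rmC^\omega$, but the paper establishes and needs only $\rmC^\infty$ smoothness of $\Psi$ (see~\eqref{eq:PSI_smooth}); real analyticity is neither proved nor required. And the gravity contribution to the linearization is indeed lower order, as you say, but this must be justified via estimates like~\eqref{eq:DPsi_34}, not merely asserted, since $\Theta$ can dominate $\sigma$ in magnitude.
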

	
 Our second main objective is to study the stability properties of the solutions with a flat interface, which are all equilibria to  \eqref{eq:STOKES}. 
	Indeed,  if $(f,v^\pm,q^\pm)$ is a solution to \eqref{eq:STOKES}, then, for each constant $c\in\RR$, the tuple~${(f+c,\tilde v^\pm,\tilde q^\pm)}$ defined by
	\[
\tilde v^\pm (t,x)=	  v^\pm(t,x-(0,c))\quad\text{and}\quad \tilde q^\pm (t,x)=	  q^\pm(t,x-(0,c))\mp\frac{c\Theta}{2},\qquad x_2\neq f(t,x_1)+c,
	\]
	is again a solution to \eqref{eq:STOKES} (with initial data $f_0+c$ and the same maximal existence time).
	Moreover, the integral mean $\langle f\rangle$ is preserved by the flow since, by \eqref{eq:Stokes}$_2$, \eqref{eq:Stokes}$_5$, and~\eqref{eq:Stokes}$_6$,  
	\begin{equation}\label{eq:pres}
	\frac{{\rm d}\langle f\rangle}{{\rm d}t}(t)=\int_{\Gamma(t)}v(t)\cdot\tnu(t)\,\dx{\sigma}=\int_{\Omega^\pm(t)}\vdiv v^\pm(t)\,\dx{x}=0,\qquad t\in[0,T_+).
	\end{equation}
	Since it is straightforward to verify that $(f,v,q)=0$ is a stationary solution to \eqref{eq:STOKES}, it follows that $(c,0,\mp c\Theta/2)$ is a stationary solution to \eqref{eq:STOKES} for each $c\in\RR.$
	 This observation together with our second main result in Theorem~\ref{Thm:stability} shows, on the one hand,  that if 
	 \begin{equation}\label{eq:Stab}
\sigma+\Theta >0,
	 \end{equation}
	 solutions corresponding to   initial data $f_0 \in\Hp{r}{\Ss}$ which are close to a constant  exist globally and $f(t)$ converges exponentially fast  towards the integral mean of $f_0.$
	 On the other hand, if~\eqref{eq:Stab} holds with reverse inequality, that is $\sigma+\Theta <0$, then the constant solutions are (nonlinearly) unstable.

	\begin{Theorem}[Exponential stability/Instability]\label{Thm:stability}\,
	\begin{itemize}
	\item[{\rm (i)}] Assume~\eqref{eq:Stab} and let $\vartheta_0$ denote the positive constant
	\begin{equation}\label{theta0}
\vartheta_0:=
\left\{
\begin{array}{lll}
\cfrac{\sigma+\Theta}{4\mu}&,& \sigma\geq \Theta,\\[2ex]
\cfrac{\sqrt{\sigma\Theta}}{2\mu}&,& \sigma< \Theta.
\end{array}
\right.
	\end{equation}
	Then, given $\vartheta\in [0, \vartheta_0)$, there exist  constants $\delta>0$ and~$M>0$, such that for any~${f_0\in\Hp{r}{\Ss}}$ satisfying 
		\begin{equation*}
			\norm{f_0}_{\nHp{r}}<\delta \qquad \text{and}\qquad \langle f_0\rangle=0,
		\end{equation*}				
		we have $T_+(f_0)=\infty$ and
		\begin{equation*}
			\norm{f(t)}_{\nHp{r}}+\norm{\frac{\dx{f}}{\dx{t}}(t)}_{\nHp{r-1}}\leq M e^{-\vartheta t}\norm{f_0}_{\nHp{r}}\qquad \text{for all  $ t\geq 0$.}
		\end{equation*}
		\item[{\rm (ii)}] If $\sigma+\Theta<0$, then the zero solution is unstable.
	\end{itemize}
	\end{Theorem}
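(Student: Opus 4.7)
The plan is to apply the principle of linearized stability and instability from~\cite{Lunardi.1995} to the fully nonlinear parabolic evolution equation~\eqref{eq:ev_eq}. Since every constant function is an equilibrium and the mean value $\langle f\rangle$ is conserved along the flow (by~\eqref{eq:pres}), the analysis naturally restricts to the closed subspace $\hHp{r}{\Ss}:=\{f\in\Hp{r}{\Ss}:\langle f\rangle=0\}$. The key object is the Fréchet derivative $A:=D\Phi(0)$ of the right-hand side $\Phi$ of~\eqref{eq:ev_eq} at $f\equiv 0$, viewed as a bounded operator $A:\hHp{r}{\Ss}\to\hHp{r-1}{\Ss}$. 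Because~\eqref{eq:ev_eq} is invariant under horizontal translations, $A$ commutes with shifts and is therefore a Fourier multiplier. Linearizing the curvature term, the gravity term $\Theta x_2$, and the layer-potential representation of the Stokes velocity at the interface around the flat configuration, I expect to obtain the classical dispersion relation
\begin{equation*}
\widehat{Af}(k)=\lambda_k\hat f(k),\qquad \lambda_k=-\frac{\Theta+\sigma k^2}{4\mu\abs{k}}\qquad (k\in\ZZ\setminus\{0\}).
\end{equation*}

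For part~(i), under hypothesis~\eqref{eq:Stab} one has $\lambda_k<0$ for every $k\neq 0$, and a one-variable minimization of the convex function $x\mapsto(\Theta+\sigma x^2)/(4\mu x)$ on $(0,\infty)$ shows that
\begin{equation*}
\min_{k\in\ZZ\setminus\{0\}}(-\lambda_k)\geq \vartheta_0,
\end{equation*}
with the minimum attained at $k=\pm 1$ in the regime $\sigma\geq\Theta$ (yielding $(\sigma+\Theta)/(4\mu)$), while in the regime $\sigma<\Theta$ the integer minimum is bounded below by the continuous minimum $\sqrt{\sigma\Theta}/(2\mu)$, attained at $\abs{x}=\sqrt{\Theta/\sigma}>1$. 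Combined with the parabolicity of~\eqref{eq:ev_eq} already used in the proof of Theorem~\ref{Thm:main}, this identifies $-A$ as a sectorial operator on $\hHp{r-1}{\Ss}$ whose spectrum lies in $\{\Rel z\geq\vartheta_0\}$. The principle of linearized stability for fully nonlinear parabolic problems, cf.~\cite[Thm.~9.1.2]{Lunardi.1995}, then yields, for each $\vartheta\in[0,\vartheta_0)$, a neighborhood of $0$ in $\hHp{r}{\Ss}$ from which all solutions exist globally and satisfy $\norm{f(t)}_{\nHp{r}}\leq Me^{-\vartheta t}\norm{f_0}_{\nHp{r}}$; the bound on $\dx{f}/\dx{t}$ then follows from the identity $\dx{f}/\dx{t}=\Phi(f)$ combined with the local Lipschitz continuity of $\Phi:\Hp{r}{\Ss}\to\Hp{r-1}{\Ss}$ near the origin.

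For part~(ii), the hypothesis $\sigma+\Theta<0$ gives $\lambda_{\pm 1}=-(\sigma+\Theta)/(4\mu)>0$, so the spectrum of $A$ enters the right half-plane and the modes $\cos\xi,\sin\xi$ span a two-dimensional unstable eigenspace. Invoking the principle of linearized instability in the fully nonlinear setting (see~\cite[Ch.~9]{Lunardi.1995}) then yields nonlinear instability of the zero equilibrium in $\Hp{r}{\Ss}$.

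The main obstacle will be the explicit computation of the Fourier symbol of $A$: it requires identifying the Fréchet derivative at $f=0$ of each of the (singular) integral operators appearing in~\eqref{eq:ev_eq} and evaluating the resulting convolution kernels on the pure exponentials $e^{ik\xi}$. This is the step where the periodic structure intervenes decisively, since the symbols are produced by the cotangent-type kernels analyzed in Appendices~\ref{Sec:A}--\ref{Sec:B}. Once the symbol is in hand, the minimization yielding~\eqref{theta0} is elementary and the abstract stability and instability theorems apply essentially verbatim.
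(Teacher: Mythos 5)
Your proposal follows the paper's strategy exactly: restrict to the mean-zero subspace $\hHp{r}{\Ss}$, compute the Fourier symbol of the linearization at $0$, verify the spectral gap $\leq-\vartheta_0$ via a one-variable minimization, and invoke \cite[Thm.~9.1.2]{Lunardi.1995} for (i) and \cite[Thm.~9.1.3]{Lunardi.1995} for (ii).

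The one place where you only gesture and the paper actually works is the symbol computation, which you correctly flag as ``the main obstacle'' but then merely assert. The paper's Lemma~\ref{Lem:Psi_0_spec} closes this gap cleanly without any appeal to translation invariance: from~\eqref{der:Psi} and the vanishing of $\phi(0)$, $\Psi_1(0)$, $\Psi_3(0)$ one reads off directly that
\begin{equation*}
\partial\Psi(0)=\frac{\Theta}{4\mu}\,B_0(0)-\frac{\sigma}{4\mu}\Big(-\frac{\dx{}^2}{\dx{\xi}^2}\Big)^{1/2},
\end{equation*}
and then observes that $B_0(0)[\varphi]=\frac{1}{2\pi}\int_{-\pi}^\pi\ln(\sin^2(s/2))\varphi(\cdot-s)\dx{s}$ equals $H\circ S$, with $S$ the (mean-normalized) antiderivative and $H$ the periodic Hilbert transform, hence has symbol $-1/\lvert k\rvert$. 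This factorization is the clean mechanism behind the dispersion relation $\lambda_k=-(\sigma k^2+\Theta)/(4\mu\lvert k\rvert)$ you posited. Two smaller points you should make explicit when finalizing: first, in the case $\Theta<0$ (allowed under $\sigma+\Theta>0$) the function $x\mapsto(\Theta+\sigma x^2)/(4\mu x)$ is monotone increasing on $[1,\infty)$, so the integer minimum is at $k=1$ and equals $(\sigma+\Theta)/(4\mu)$, consistent with the first branch of~\eqref{theta0}; second, for (ii) the instability theorem requires not only that $\sigma_+(\partial\widehat\Psi(0))\neq\emptyset$ but also that $\inf\{\Rel\lambda:\lambda\in\sigma_+(\partial\widehat\Psi(0))\}>0$, which holds here since the positive eigenvalues form a finite set -- the paper records this condition and you should too.
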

	
Outline: In Section~\ref{Sec:FTP} and Appendix~\ref{Sec:C} we  solve the fixed time problem~\eqref{eq:Stokes}$_1$-\eqref{eq:Stokes}$_5$ with a general right-hand side in \eqref{eq:Stokes}$_4$ and \eqref{eq:Stokes}$_5$.
Then, in Section~\ref{Sec:31}, we introduce two classes of (singular) integral operators,   studied in  Appendix~\ref{Sec:A} and Appendix~\ref{Sec:B}, which enable us to reformulate in 
Section~\ref{Sec:32} the flow \eqref{eq:STOKES} as a nonlinear and nonlocal evolution problem for~$f$.
In Section~\ref{Sec:33}-Section~\ref{Sec:34} it is shown that the problem is of parabolic type,  the main results being established in Section~\ref{Sec:35}.

\section{The fixed time problem}\label{Sec:FTP}

	In this section we address the solvability of the  boundary value problem~\eqref{eq:Stokes}$_1$-\eqref{eq:Stokes}$_5$  at a fixed time instant~${t>0}$, under the assumption that $f=f(t)$ is sufficiently regular
	 and with a general right-hand side in \eqref{eq:Stokes}$_4$ and \eqref{eq:Stokes}$_5$.
	More precisely, we fix $f\in\Hp{3}{\Ss}$ and consider the boundary value problem
	\begin{equation}\label{eq:refStokes}
		\left.
		\begin{array}{rclll}
		\mu\Delta v^\pm-\nabla q^\pm&=&0&\mbox{in $\Omega^\pm$,}\\
		\vdiv v^\pm&=&0&\mbox{in $\Omega^\pm$,}\\{}
		[v]&=&0&\mbox{on $\Gamma$,}\\{}
		[T_{\mu}(v,q)]\tnu&=&(\omega^{-1} G)\circ\Xi^{-1}&\mbox{on $\Gamma$,}\\
		(v^\pm,q^\pm)(x)&\to&(\pm c_1,0,\pm c_2)&\mbox{for $x_2\to\pm\infty$,}\\
		\end{array}\right\}
	\end{equation}	
	where  $G:=(G_1,G_2)\in\Hp{1}{\Ss}^2$ satisfies~$\langle G_1\rangle=0$
	and the constants $c_1,\, c_2\in\RR$ are arbitrary.
	The domains $\Omega^\pm$ and their common boundary $\Gamma$ are defined by
	\begin{equation*}
		\Omega^\pm:=\{x=(x_1 ,x_2)\in\Ss\times\RR: x_2\gtrless f(x_1)\},\quad \Gamma:= \{(\xi, f(\xi))\in\Ss\times\RR: \xi\in\Ss\}.
	\end{equation*}
Note that  $\Xi:= \Xi_f:= (\text{id}_\Ss, f)$  is a diffeomorphism that maps the $\Ss$  onto $\Gamma$. 
Further, $\nu$ and~$\tau$ are the componentwise pull-back under $\Xi$ of the unit normal $\tilde\nu$  on $\Gamma$ exterior to $\Omega^{-}$ and of the tangent $\tilde \tau $,  that is
	\begin{equation}\label{nutauzomega}
		\nu:=\nu(f) :=\omega^{-1}(-f', 1)^\top, \quad \tau:=\tau(f) :=\omega^{-1}(1,f')^\top,\quad \omega:=\omega(f):= (1+f'^2)^{1/2}.
	\end{equation}
	For any functions $z^\pm$ defined on $\Omega^\pm$, respectively, and having limits at some $(\xi,f(\xi))\in\Gamma$, we will write
	\begin{equation}\label{eq:defjump}
		[z](\xi,f(\xi)):= \lim_{\Omega^{+}\ni x\to(\xi,f(\xi))}z^{+}(x)-\lim_{\Omega^{-}\ni x\to(\xi,f(\xi))}z^{-}(x).
	\end{equation}
	\begin{Remark}\label{R:FTP} We note that in the particular case  when
	\begin{equation}\label{defHG}
 G:=G(f):=\Theta(- ff', f)-\sigma(\omega^{-1}-1,\omega^{-1} f')', 
	\end{equation}
	 we have $G\in \Hp{1}{\Ss}^2 $ and  
	\[
	(\omega^{-1} G)\circ\Xi^{-1}=\big(\Theta x_2-\sigma\wt \kappa\big) \wt \nu.
	\]
Consequently, the right-hand sides of \eqref{eq:refStokes}$_4$ and  of \eqref{eq:Stokes}$_4$ coincide in this case.
	\end{Remark}
	
	Before stating our result on the solvability of  \eqref{eq:refStokes} we introduce some further notation.
	To start, we define for  $0\neq x=(x_1,x_2)\in\Ss\times\RR$
	\begin{equation}\label{eq:GPi}
		G_\pi(x):=-\frac{1}{4\pi}\ln\left(\frac{\ts{x_1}^2+\T{x_2}^2}{(1+\ts{x_1}^2)(1-\T{x_2}^2)}\right)
		=-\frac{1}{4\pi}\ln \Big(\sin^2\Big(\frac{x_1}{2}\Big)+\sinh^2\Big(\frac{x_2}{2}\Big)\Big),
	\end{equation}
	which is the fundamental solution to the $x_1$-periodic Laplace equation,  that is, $G_\pi$ solves the equation $-\Delta G_{\pi}=\delta_{0}$ in $\mcD'(\Ss\times\RR)$.
	We use the shorthand notation
	\begin{equation}\label{eq:notation1}
		t_{[x_1]}:=\tan\!\left(\frac{x_1}{2}\right),\quad x_1\in\RR\setminus(\pi+2\pi\mathbb{Z}),\qquad\text{and}\qquad T_{[x_2]}:=\tanh\!\left(\frac{x_2}{2}\right),\quad x_2\in\RR.
	\end{equation}
	The $x_1$-periodic Stokeslet $(\mcU,\mcP)$, with $\mcU:=(\mcU^1,\mcU^2)$ and $\mcP:=(\mcP^1,\mcP^2)$,  is defined by
	\begin{equation}\label{eq:UP}
	\begin{aligned}
		&\mcU^1(x)=-\frac{1}{2\pi}\big(G_\pi(x)+x_2\partial_2 G_\pi(x),-x_2 \partial_1 G_\pi(x)\big),\qquad &&\mcP^1(x)=\partial_1 G_\pi (x),\\
		&\mcU^2(x)=-\frac{1}{2\pi}\big(-x_2\partial_1 G_\pi(x),G_\pi(x)-x_2\partial_2 G_\pi(x)\big),\qquad &&\mcP^2(x)=\partial_2 G_\pi (x)
	\end{aligned}
	\end{equation}
 for $x=(x_1,x_2)\in(\Ss\times\RR)\setminus\{0\},$ and we  may reexpress \eqref{eq:UP} as follows
	\begin{equation}\label{eq:UPdef}
	\begin{gathered}
		\mcU (x)=\frac{1}{8\pi}\left(\ln\Bigg(\frac{\ts{x_1}^2+\T{x_2}^2}{(1+\ts{x_1}^2)(1-\T{x_2}^2)}\Bigg)I_2-x_2
		\begin{pmatrix}
			-\frac{(1+\ts{x_1}^2)\T{x_2}}{\ts{x_1}^2+\T{x_2}^2} & \frac{\ts{x_1}(1-\T{x_2}^2)}{\ts{x_1}^2+\T{x_2}^2}\\[2ex]
			\frac{\ts{x_1}(1-\T{x_2}^2)}{\ts{x_1}^2+\T{x_2}^2} & \frac{(1+\ts{x_1}^2)\T{x_2}}{\ts{x_1}^2+\T{x_2}^2}
		\end{pmatrix}
		\right),\\
		\mcP^1(x)=-\frac{1}{4\pi}\frac{\ts{x_1}(1-\T{x_2}^2)}{\ts{x_1}^2+\T{x_2}^2},\qquad\mcP^2(x)=-\frac{1}{4\pi}\frac{(1+\ts{x_1}^2)\T{x_2}}{\ts{x_1}^2+\T{x_2}^2}.
	\end{gathered}
	\end{equation}
	Using \eqref{eq:UP}, it is straightforward to prove that $(\mcU^k,\mcP^k)$, $k=1,\,2$, are fundamental solutions to the   Stokes equations in the sense that
	\begin{equation}\label{eq:HSE}
	\left.
	\begin{array}{rclll}
		\Delta\mcU^{k}-\nabla\mcP^k&=&\delta_0 e_k ,\\[1ex]
		\vdiv \mcU^k &=& 0
	\end{array}	
	\right\}\qquad		\mbox{in $\mcD'(\Ss\times\RR)$},
	\end{equation}		
	 where $e_1:=(1,0)$ and $e_2:=(0,1)$. 
	 In particular, they solve the  Stokes equations \eqref{eq:HSE} pointwise in $(\Ss\times\RR )\setminus\{0\}.$
	For the derivation of the $x_1$-periodic Stokeslet $(\mcU,\mcP)$,  we refer to the recent paper  \cite{GGS23x } (see also \cite{Bohme.2026} for an alternative derivation).
	
	In Theorem~\ref{Thm:FT_unique} below and further on we sum over indices appearing twice in a product.

	\begin{Theorem}\label{Thm:FT_unique}
		Given $f\in\Hp{3}{\Ss}$ and $G\in\Hp{1}{\Ss}^2$ with $\langle G_1\rangle=0$, the  boundary value  problem~\eqref{eq:refStokes} 
		has a solution $(v^\pm,q^\pm)$ such that 
		\begin{equation}\label{eq:regcla}
		v^{\pm}\in \mathrm{C}^{2}(\Omega^\pm,\RR^2)\cap \mathrm{C}^1(\overline{\Omega^\pm},\RR^2)\qquad\text{ and}\qquad  q^\pm \in \mathrm{C}^1(\Omega^\pm)\cap \mathrm{C}(\overline{\Omega^\pm})
		\end{equation}
		 if and only if the constants $c_1,\, c_2$ in \eqref{eq:refStokes}$_5$ are given by
		\begin{equation}\label{eq:c1c2}
			c_1 = - \frac{\langle fG_1\rangle}{2\mu}\qquad\text{and}\qquad c_2= -\frac{\langle G_2\rangle}{2}.
		\end{equation}
		If $c_1,\, c_2$ are defined by \eqref{eq:c1c2}, then the solution $(v^\pm,q^\pm)$  is unique and is given by the formula
		\begin{equation}\label{eq:vq}
		v^\pm:=v_G^\pm+\Big(0,\frac{\langle G_2\rangle\ln 4}{4\mu}\Big)\qquad\text{and}\qquad q^\pm:=q^\pm_G,
	\end{equation}
 where, given $x\in\Omega^\pm$,
	\begin{equation}\label{eq:vqdef}
	\begin{aligned}
		&v_G^\pm(x):=\frac{1}{\mu} \int_{-\pi}^\pi  \mcU^k (x-(s,f(s))) G_k(s)\dx{s},\quad && q_G^\pm(x):= \int_{-\pi}^\pi  \mcP^{k}(x-(s,f(s)))G_k(s)\dx{s}.
	\end{aligned}
	\end{equation}
	\end{Theorem}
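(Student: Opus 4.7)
The plan is to construct the solution explicitly via the single-layer ansatz \eqref{eq:vq}--\eqref{eq:vqdef} and then to close the argument with a uniqueness analysis that simultaneously forces \eqref{eq:c1c2} to be necessary. Since the periodic Stokeslet $(\mcU^k,\mcP^k)$ is smooth on $(\Ss\times\RR)\setminus\{0\}$ and satisfies \eqref{eq:HSE} pointwise off the origin, differentiating $(v_G^\pm,q_G^\pm)$ under the integral for $x\in\Omega^\pm$ (where $(s,f(s))\in\Gamma$ stays away from $x$) yields \eqref{eq:refStokes}$_1$--\eqref{eq:refStokes}$_2$ together with $v_G^\pm\in\rmC^2(\Omega^\pm,\RR^2)$ and $q_G^\pm\in\rmC^1(\Omega^\pm)$. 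The regularity $v_G^\pm\in\rmC^1(\overline{\Omega^\pm},\RR^2)$ and $q_G^\pm\in\rmC(\overline{\Omega^\pm})$ up to $\Gamma$ follows from the classical mapping properties of the Stokes single-layer potential, since $f\in\Hp{3}{\Ss}\subset\rmC^{2,1/2}(\Ss)$ and $G\in\Hp{1}{\Ss}^2\subset\rmC^{0,1/2}(\Ss)^2$. Continuity of the single layer across $\Gamma$ gives $[v_G]=0$ automatically, while the stress jump $[T_\mu(v_G,q_G)]\tnu=(\omega^{-1}G)\circ\Xi^{-1}$ is the Plemelj-type jump relation for the Stokes single layer, to be derived from the explicit form of $(\mcU^k,\mcP^k)$ in terms of $G_\pi$ (this is presumably the content of Appendix~\ref{Sec:C}).

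For the far-field identification, I would expand \eqref{eq:UPdef} as $x_2\to\pm\infty$. With $\T{x_2}=\tanh(x_2/2)\to\pm 1$ and $1-\T{x_2}^2\sim 4\e^{-|x_2|}$, the logarithm in $\mcU$ grows like $|x_2|-\ln 4$ uniformly in $x_1\in\Ss$, the matrix part tends to $\mathrm{diag}(\mp 1,\pm 1)$, and a short computation produces
\[
\mcU(x)\longrightarrow\frac{1}{8\pi}\mathrm{diag}\bigl(2|x_2|-\ln 4,\,-\ln 4\bigr),\qquad \mcP^1(x)\to 0,\qquad \mcP^2(x)\to \mp\frac{1}{4\pi}
\]
uniformly in $x_1$. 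Integrating these limits against $G(s)\dx{s}$ in \eqref{eq:vqdef}, invoking $\langle G_1\rangle=0$ to cancel the $|x_2|$-growth and the $\ln 4$ contribution in the first component and absorbing the non-vanishing constant $-\langle G_2\rangle\ln 4/(4\mu)$ in the second via the shift in \eqref{eq:vq}, produces exactly $v^\pm(x)\to(\pm c_1,0)$ and $q^\pm(x)\to\pm c_2$ with $c_1,c_2$ as in \eqref{eq:c1c2}. This settles the \emph{if} direction and identifies the values of the constants.

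For the converse and the uniqueness, let $(v,q)$ be any solution of \eqref{eq:refStokes} with the stated regularity and some constants $c_1,c_2$, and let $(v_\star,q_\star)$ denote the explicit solution with constants $c_1^\star,c_2^\star$ given by \eqref{eq:c1c2}. The difference $(w,p):=(v-v_\star,q-q_\star)$ solves the homogeneous Stokes system in $\Omega^\pm$ with $[w]=0$, $[T_\mu(w,p)]\tnu=0$, and limits $(\pm(c_1-c_1^\star),0,\pm(c_2-c_2^\star))$ as $x_2\to\pm\infty$. Because $w$ is divergence-free with $[w]=0$ and the stress jump vanishes, the pair $(w,p)$ is a distributional Stokes solution on the whole strip $\Ss\times\RR$ (the spurious Dirac contributions cancel precisely under the div-free condition), and interior elliptic regularity then makes it $\rmC^\infty$. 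The zero Fourier mode in $x_1$ satisfies $\mu w_0''=(0,p_0')^\top$ together with $w_{0,2}'=0$; boundedness forces $w_0$ and $p_0$ to be constants, and comparing the opposite-signed limits at $x_2\to\pm\infty$ yields $c_1=c_1^\star$ and $c_2=c_2^\star$. The nonzero Fourier modes satisfy mode equations on $\RR$ whose solutions are combinations of $\e^{\pm|k|x_2}$ and $x_2\e^{\pm|k|x_2}$, none of which are bounded at both $\pm\infty$ except the trivial one; hence $(w,p)\equiv 0$.

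The most delicate step is the jump relation at the relatively low regularity $f\in\Hp{3}{\Ss}$ and $G\in\Hp{1}{\Ss}^2$: the classical Plemelj derivations for the Stokes layer potential assume Hölder densities, and their adaptation to this periodic Sobolev setting is the technical heart of the argument and is deferred to Appendix~\ref{Sec:C}. The remaining steps---the interior Stokes PDE, the far-field asymptotics, and the Fourier-mode uniqueness---are routine once the explicit formulas for $(\mcU^k,\mcP^k)$ are in hand.
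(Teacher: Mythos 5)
Your proposal follows essentially the same route as the paper: construct $(v^\pm,q^\pm)$ via the periodic single-layer ansatz, verify the Stokes equations by differentiating under the integral sign inside $\Omega^\pm$, defer the up-to-the-boundary regularity and the stress jump relation to Appendix~\ref{Sec:C} (your far-field asymptotics of $(\mcU,\mcP)$ match the content of Lemma~\ref{Lem:C3}), and close by uniqueness for the glued distributional Stokes solution on all of $\Ss\times\RR$. The one place you genuinely diverge is the uniqueness step: rather than decomposing into Fourier modes in $x_1$, the paper takes the divergence of the momentum equation to conclude that $\sfq$ is a bounded harmonic function on $\Ss\times\RR$; Liouville then forces $\sfq\equiv 0$, after which $\sfv$ itself is harmonic and bounded, and a second application of Liouville combined with the opposite-signed far-field limits gives $\sfv\equiv 0$ and forces the far-field constants to vanish. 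The Liouville route is slightly slicker and sidesteps any discussion of Fourier-series convergence; both arguments reach the same conclusion. One small imprecision in your write-up: the surface Dirac contributions produced by gluing cancel because $[w]=0$ and the stress jump vanishes (in the paper's notation this is~\eqref{eq:tensor_jump}), not directly because $w$ is divergence-free---the latter enters only indirectly, in reducing $[T_\mu(w,p)]\tnu=0$ to the form $\mu[\partial_\tnu w]-[p]\tnu=0$.
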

	\begin{proof} We devise the proof into  several parts.\medskip

	\noindent{\bf Uniqueness.} For the uniqueness statement, we  need to show that if  $(\sfv^\pm,\sfq^\pm)$ satisfies \eqref{eq:regcla} and solves the boundary value problem
		\begin{equation}\label{eq:Stokes_zero}
		\left.
		\begin{array}{rclll}
		\mu\Delta \sfv^\pm-\nabla \sfq^\pm&=&0&\mbox{in $\Omega^\pm$,}\\
		\vdiv \sfv^\pm&=&0&\mbox{in $\Omega^\pm$,}\\{}
		[\sfv]&=&0&\mbox{on $\Gamma$,}\\{}
		[T_\mu(\sfv,\sfq)]\tnu&=&0&\mbox{on $\Gamma$,}\\
		(\sfv^\pm,\sfq^\pm)(x)&\to&(\pm {\sf c}_1,0,\pm {\sf c}_2)&\mbox{for $x_2\to\pm\infty$ }
		\end{array}\right\}
		\end{equation}
		for some $({\sf c_1}, {\sf c_2})\in\RR^2,$ then actually $(\sfv^\pm,\sfq^\pm)=(0,0)$ in $\Omega^\pm$ and ${\sf c_1}={\sf c_2}=0$.  
		We first note, in view of~\eqref{eq:Stokes_zero}$_2$, that 
		\begin{equation*}
			T_\mu(\sfv^\pm,\sfq^\pm)\tnu = -\sfq^\pm \tnu +\mu
			\begin{pmatrix}
				\partial_\tnu \sfv_1^\pm +	\partial_{\tilde\tau}\sfv_2^\pm  \\
				\partial_\tnu \sfv_2^\pm - 	\partial_{\tilde\tau}\sfv_1^\pm 
			\end{pmatrix} ,
		\end{equation*}		 
and, since $[\partial_{\tilde\tau}\sfv]=0$ as a consequence of \eqref{eq:regcla} and~\eqref{eq:Stokes_zero}$_3$, we arrive together with \eqref{eq:Stokes_zero}$_4$ at 	 
		\begin{equation}\label{eq:tensor_jump}
			\mu[\partial_{\tnu}\sfv]-[\sfq]\tnu=[T_\mu(\sfv,\sfq)]\tnu = 0.
		\end{equation}
	Set $(\sfv,\sfq):= \mathbf{1}_{\Omega^{+}}(\sfv^+ ,\sfq^+ )+\mathbf{1}_{\Omega^{-}}(\sfv^- ,\sfq^- )\in\Lp{\infty}{\Ss\times\RR,\RR^2\times\RR}.$
		We then compute, in light of~\eqref{eq:regcla}, \eqref{eq:Stokes_zero}$_1$-\eqref{eq:Stokes_zero}$_3$, and~\eqref{eq:tensor_jump},  that 
			\begin{equation*}
	\left.
	\begin{array}{rclll}
			\mu \Delta \sfv  -\nabla \sfq&=&0 ,\\[1ex]
	\vdiv \sfv&=& 0
	\end{array}	
	\right\}\qquad		\mbox{in $\mcD'(\Ss\times\RR)$}.
	\end{equation*}	
	In particular, taking the divergence of the first equation yields $\Delta \sfq=0$,  hence,  $\sfq$ is a harmonic function in $\Ss\times\RR$. 
	Since $\sfq$ is bounded by \eqref{eq:Stokes_zero}$_5$, Liouville's theorem and \eqref{eq:Stokes_zero}$_5$ now yield~${\sfq=0}$ in~$\RR^2$. 
	This in turn means that $\sfv_1$ and $\sfv_2$ are harmonic in $\Ss\times\RR$, and, since~$\sfv$ is bounded by \eqref{eq:Stokes_zero}$_5$, we conclude together with \eqref{eq:Stokes_zero}$_5$ that  $\sfv=0$ and ${\sf c_1}={\sf c_2}=0$, which proves  the uniqueness claim.\medskip

	\noindent{\bf Solution of the Stokes equations.} To prove that $(v^\pm,q^\pm)$ defined in~\eqref{eq:vq}-\eqref{eq:vqdef} solves the equations \eqref{eq:refStokes}$_1$-\eqref{eq:refStokes}$_2$, we fix $x_0\in\Omega^\pm$ and choose $\ve>0$ such that the closed ball
	$\overline{B}_\ve(x_0) $ is contained in $\Omega^\pm$.
	Since $(\mcU,\mcP)(\cdot-(s,f(s)))\in\rmC^\infty(\Omega^\pm,\RR^{2\times 2}\times\RR^2)$, cf. \eqref{eq:GPi} and \eqref{eq:UP}, for each fixed~${s\in\Ss}$, the partial derivatives			
	$\partial^{\alpha}_{x}\mcU^k_j(\cdot-(s,f(s)))$, $\partial^{\alpha}_{x}\mcP^k(\cdot-(s,f(s)))$, $\alpha\in\NN^2$,  are bounded  
	 in $\overline{B}_\ve(x_0)$  uniformly in $s\in\Ss.$ 
	  Therefore, the function $(v^\pm,q^\pm)$ is well-defined in~\eqref{eq:vq}-\eqref{eq:vqdef} and 
we may interchange  differentiation with respect to $x$ and the integral sign in these formulas.
Recalling that  $(\mcU^k,\mcP^k)$, $k=1,\, 2$,  solve the Stokes equations \eqref{eq:HSE} pointwise in $(\Ss\times\RR )\setminus\{0\}$, it follows immediately that $(v^\pm,q^\pm)$ solve \eqref{eq:refStokes}$_1$-\eqref{eq:refStokes}$_2$ 
in $\Omega^\pm$. \medskip

	\noindent{\bf Boundary conditions.} The boundary conditions \eqref{eq:refStokes}$_3$-\eqref{eq:refStokes}$_4$ 
	together with the  far-field boundary condition~\eqref{eq:refStokes}$_5$ for~${(v^\pm,q^\pm)}$  follow by combining the results of Lemma~\ref{Lem:v_bd}  and  Lemma~\ref{Lem:q_bd} below.
	\end{proof}

\section{The evolution problem}\label{Sec:3}

 In this section we combine the results  from Section~\ref{Sec:FTP}, Appendix~\ref{Sec:A}, and Appendix~\ref{Sec:B} to reformulate the moving boundary problem \eqref{eq:STOKES} 
as a fully  nonlinear and nonlocal evolution problem for the function $f$, see \eqref{eq:ev_eq} below, with nonlinearities   defined in terms of (singular) integral operators.
Then,   exploiting estimates from Appendix~\ref{Sec:A} and the localization result in Lemma~\ref{Lem:Cnm_approx_a}, we prove that the evolution problem~\eqref{eq:ev_eq} 
is of parabolic type. 
This property, together with the abstract parabolic theory from~\cite{Lunardi.1995}, is then used to establish our main results in Theorem~\ref{Thm:main} and Theorem~\ref{Thm:stability}.

\subsection{Two classes of (singular) integral operators}\label{Sec:31} 
In this section we introduce  two classes of (singular) integral operators $B_{n,m}^{p,q}$ and $C_{n,m}$, 
the operators $B_{n,m}^{p,q}$ (together with the integral operator $B_0$) constituting via \eqref{eq:B_by_Bnmpq} the main building blocks of the  evolution operator in~\eqref{eq:ev_eq}, 
while the operators $C_{n,m}$  (which in a suitable sense retain the singular part  of the operators $B_{n,m}^{p,q}$ with~$p=0$) are important in the analysis of \eqref{eq:ev_eq}.

To start, given integers $m,\,n,\,p,\,q\in\NN_0$ satisfying $p\leq n+q+1$, and Lipschitz continuous mappings
	  $\bfa=(a_1,\dots,a_m):\RR\to\RR^m,$ $\bfb=(b_1,\dots,b_n):\RR\to\RR^n,$~${\bfc=(c_1,\dots,c_q):\RR\to\RR^q}$ we define the integral operators
	\begin{equation}\label{eq:Bnmpq}
		B_{n,m}^{p,q}(\bfa\vert \bfb)[\bfc,\varphi](\xi):= \frac{1}{2\pi}\PV\int_{-\pi}^{\pi}\frac{\prod\limits_{i=1}^{n}\frac{\T{\xi,s}b_i}{\tss}\prod\limits_{i=1}^{q}\frac{\dg{\xi,s}{c_i}/2}{\tss}}{\prod\limits_{i=1}^{m}\sqp{1+\p{\frac{\T{\xi,s}a_i}{\tss}}^2}}\frac{\varphi(\xi-s)}{\tss}\tss^p\dx{s}
	\end{equation}
	and
	\begin{equation}\label{eq:Cnm}
		C_{n,m}(\bfa)[\bfb,\varphi](\xi):= \frac{1}{\pi}\PV\int_{-\pi}^{\pi}\frac{\prod\limits_{i=1}^{n}\frac{\dg{\xi,s}{b_i}}{s}}{\prod\limits_{i=1}^{m}\sqp{1+\p{\frac{\dg{\xi,s}{a_i}}{s}}^2}}\frac{\varphi(\xi-s)}{s}\dx{s}, 
	\end{equation}
	where $\varphi\in\Lp{2}{\Ss}$ and $\xi\in\RR$.
	We use the notation introduced in \eqref{eq:notation1} together with the shorthand 
	\begin{align}\label{eq:notation2}
			&\dg{\xi,s}{f}:= f(\xi)-f(\xi-s)\qquad\text{and}\qquad\T{\xi,s}f:=\tanh\Big(\frac{\dg{\xi,s}{f}}{2}\Big),\quad \xi,\, s\in\RR.
	\end{align}	
	As shown in Lemma~\ref{Lem:Anmq_Bnmpq_inf} below, the $\PV$ symbol is not needed in \eqref{eq:Bnmpq} if $p\geq1$.

Moreover, we point out that if  the functions $\bfa,\, \bfb,$ and $\bfc$ are $2\pi$-periodic, then so are also the mappings $B_{n,m}^{p,q}(\bfa\vert \bfb)[\bfc,\varphi]$
and $C_{n,m}(\bfa)[\bfb,\varphi]$.
In particular,
\begin{equation}\label{eq:HT}
 B_{0,0}^{0,0}[\varphi](\xi)=\frac{1}{2\pi}\PV\int_{-\pi}^\pi\frac{\varphi(\xi-s)}{t_{[s]}}\, \dx{s}=   H[\varphi](\xi),\qquad \xi\in\RR,
\end{equation}
 where $ H$ is the periodic Hilbert transform, see e.g. \cite{Torchinsky.2004, Butzer.1971}.
Mapping properties for the   operators $B_{n,m}^{p,q}$ and $C_{n,m}$ are established  in Appendix~\ref{Sec:A}. 

If all coordinate functions of $\bfa,$ $\bfb,$ and~$\bfc$ are  identical to a given function $f\in \Wpp{1}{\infty}{\Ss}$, we set
 \begin{equation}\label{eq:B(f)}
  B_{n,m}^{p,q}(f):=B_{n,m}^{p,q}(f,\ldots, f\vert f,\ldots,f)[f,\ldots,f,\cdot],\qquad 0\leq p\leq n+q+1,
 \end{equation}
respectively,
  \begin{equation}\label{eq:Cnm0}
	C_{n,m}^0(f):=C_{n,m}(f,\ldots,f)[f,\ldots, f,\cdot].
	\end{equation}
 The operators $ B_{n,m}^{p,q}(f)$ appear in the reformulation~\eqref{eq:ev_eq} of the Stokes problem and  $C_{n,m}^0(f)$ are used in its analysis.

Finally, we introduce a further integral operator $B_0$ by setting
	\begin{equation}\label{eq:B0_alt}
		B_0(f)[\varphi](\xi):=\frac{1}{2\pi}\int_{-\pi}^\pi\ln\!\p{\frac{\tss^2+(\Txsf)^2}{(1+\tss^2)(1-(\Txsf)^2)}}\varphi(\xi-s)\dx{s},\qquad \xi\in\Ss,
	\end{equation}
	where again $f\in \Wpp{1}{\infty}{\Ss}$ and $\varphi\in\Lp{2}{\Ss}$.

As shown in Corollary~\ref{C:CCC},  given $r\in(3/2,2)$,  the mappings
\begin{equation}\label{eq:B_map_diff}
\begin{aligned}
&[f\mapsto B_{n,m}^{0,q}(f)]: \Hp{r}{\Ss}\to \mcL(\Hp{r-1}{\Ss}),\\[1ex]
&[f\mapsto B_0(f)],\,[f\mapsto B_{n,m}^{p,q}(f)]: \Hp{r}{\Ss}\to \mcL(\Hp{r-1}{\Ss}, \Hp{r}{\Ss}),\qquad   1\leq p\leq n+q+1,\\[1ex]
\end{aligned}
\end{equation}
are smooth.
 These properties are essential in the study of \eqref{eq:ev_eq}. 
 
 Finally, we introduce  the operators 
	\begin{equation}\label{eq:B_by_Bnmpq}
	\begin{aligned}
		B_1(f)&:=B_{0,1}^{0,0}(f)-B_{2,1}^{2,0}(f),\\
		B_2(f)&:=B_{1,1}^{0,0}(f)+B_{1,1}^{2,0}(f),\\
		B_3(f)&:=B_{0,2}^{0,1}(f)+B_{0,2}^{2,1}(f)-B_{2,2}^{0,1}(f)-2B_{2,2}^{2,1}(f)-B_{2,2}^{4,1}(f)+B_{4,2}^{2,1}(f)+B_{4,2}^{4,1}(f),\\	
		B_4(f)&:=B_{1,2}^{0,1}(f)+B_{1,2}^{2,1}(f)-B_{3,2}^{2,1}(f)-B_{3,2}^{4,1}(f),\\
		B_5(f)&:=2\big(B_{0,1}^{1,1}(f)-B_{2,1}^{3,1}(f)\big),\\
		B_6(f)&:=2\big(B_{1,1}^{1,1}(f)+B_{1,1}^{3,1}(f)\big),
	\end{aligned}
	\end{equation}
	which appear in a natural way in the analysis, see \eqref{eq:v_g} and \eqref{eq:v_z}.

\subsection{The reformulation of the Stokes problem~\eqref{eq:STOKES}}\label{Sec:32}
Let ~${(f,v^\pm,q^\pm)}$  be a solution  to~\eqref{eq:STOKES} enjoying the regularity properties  in Theorem~\ref{Thm:main}~(i).
Since $f(t)\in \Hp{3}{\Ss}$, and consequently  $ G(f(t))\in \Hp{1}{\Ss}^2$, see \eqref{defHG}, for all $t>0$, we infer from Remark~\ref{R:FTP} and Theorem~\ref{Thm:FT_unique}   that 
   the function 
$(v^\pm(t),q^\pm(t))$ is  identified by the system~\eqref{eq:Stokes}$_1$-\eqref{eq:Stokes}$_5$ and~\eqref{eq:c_inf} according to \eqref{eq:vq}-\eqref{eq:vqdef}.
Together with the kinematic boundary condition~\eqref{eq:Stokes}$_6$ and the  formulas~\eqref{eq:v_g}, and \eqref{eq:v_z} for the trace of the velocity $v_G$, we deduce that $f$ solves the evolution problem
\begin{equation}\label{eq:ev_eq}
		\frac{\dx{f}}{\dx{t}}(t)=\Psi(f(t)),\quad t>0,\qquad f(0)=f_0,
	\end{equation}
where the operator $\Psi$ is defined by
	\begin{equation}\label{eq:def_Psi}
		\Psi(f):= \frac{\sigma}{4\mu}f' \Psi_1(f)+\frac{\Theta}{4\mu}f' \Psi_3(f)-\frac{\sigma}{4\mu}\Psi_2(f)+\frac{\Theta}{4\mu}\Psi_4(f)+\frac{\Theta\ln(4)}{4\mu}\langle f\rangle, 
	\end{equation}
	with
	\begin{equation}\label{eq:def_Psii}
	\begin{aligned}
		\Psi_1(f)&:=\big(B_1-2B_4\big)(f)[\phi_1(f)-f' \phi_2(f)]\\
		&\quad\,+\big(2B_2+B_3)(f)[f' \phi_1(f)]+B_3(f)[\phi_2(f)],\\
	\Psi_2(f)&:= B_1(f)[\phi_2(f)-f' \phi_1(f)]\\
	&\quad\,+B_3(f)[\phi_1(f)-f' \phi_2(f)]+2B_4(f)[f' \phi_1(f)+\phi_2(f)],\\
	\Psi_3(f)&:=\big(B_0(f)+B_6(f)\big)[ff']+B_5(f)[f],\\
	\Psi_4(f)&:=\big(B_0(f)-B_6(f)\big)[f]+B_5(f)[ff'],
	\end{aligned}
	\end{equation}
	where $\phi=\phi(f)$ is given by
	\begin{equation}\label{eq:defphi}
		 \phi(f):=(\phi_1(f),\phi_2(f)):=(\omega(f)^{-1}- 1,f'\omega(f)^{-1}).
	\end{equation}
 The function $\phi=\phi(f)$ appears in the definition of $G=G(f) $ in \eqref{defHG}.
	
	Let  $r\in(3/2,2)$ be fixed in the following.
	As an important observation we note  that the right-hand side of \eqref{eq:ev_eq} is well-defined for all functions $f$ which belong to $ \Hp{r}{\Ss}$.
	In order to study \eqref{eq:ev_eq}, we first establish the following result.
	\begin{Lemma}\label{Lem:deriv_phi}
		Given $r\in (3/2,2)$, we have $\phi \in\rmC^\infty(\Hp{r}{\Ss},\Hp{r-1}{\Ss}´^2)$ and the Fr\' echet derivative~${\partial\phi(f_0)=(\partial\phi_1(f_0),\partial\phi_2(f_0))}$,~${f_0\in\Hp{r}{\Ss}}$, 
		satisfies
		\begin{equation}\label{eq:deriv_phi}
		\partial\phi_i(f_0)=a_i(f_0)\dfx{}{\xi}\in\mcL(\Hp{r}{\Ss},\Hp{r-1}{\Ss}),\qquad i=1,\, 2,
		\end{equation}
		 where $a_i(f_0)\in \Hp{r-1}{\Ss}$ are given by
		\begin{equation}\label{eq:def_ai}
			a_1(f_0):=-\frac{f_0'}{(1+f_0'^2)^{3/2}}\qquad\text{and}\qquad a_2(f_0):=\frac{1}{(1+f_0'^2)^{3/2}}.
		\end{equation}
	\end{Lemma}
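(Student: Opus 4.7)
The plan is to view $\phi$ as the composition of the bounded linear derivative operator with two Nemytskii-type operators associated with smooth real functions, and then to invoke a classical smoothness result for such composition operators on Sobolev spaces on the circle.

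First I would record the two structural facts that drive the argument. Since $r-1>1/2$, the space $\Hp{r-1}{\Ss}$ is continuously embedded in $\rmC(\Ss)$ and is a Banach algebra with respect to pointwise multiplication; moreover, for any $g\in\rmC^\infty(\RR,\RR)$ the Nemytskii operator
\begin{equation*}
N_g:\Hp{r-1}{\Ss}\to\Hp{r-1}{\Ss},\qquad N_g(h):=g\circ h,
\end{equation*}
is well-defined and of class $\rmC^\infty$, with Fr\'echet derivative $\partial N_g(h)[k]=(g'\circ h)\,k$. (If desired, this can be seen directly for real-analytic $g$ with infinite radius of convergence by expanding $g(h_0+k)$ as a convergent power series in $k$ in the Banach algebra $\Hp{r-1}{\Ss}$.) Second, the differentiation map $[f\mapsto f']:\Hp{r}{\Ss}\to\Hp{r-1}{\Ss}$ is bounded linear, hence smooth.

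Next, I would rewrite $\phi_1$ and $\phi_2$ in a form making the composition structure explicit. Define $g_1,g_2\in\rmC^\infty(\RR,\RR)$ by
\begin{equation*}
g_1(x):=(1+x^2)^{-1/2}-1,\qquad g_2(x):=x(1+x^2)^{-1/2},
\end{equation*}
so that $\phi_i(f)=g_i\circ f'$ for $i=1,\,2$. Both factors—the linear map $f\mapsto f'$ and the Nemytskii operator $N_{g_i}$—are smooth as maps $\Hp{r}{\Ss}\to\Hp{r-1}{\Ss}$ and $\Hp{r-1}{\Ss}\to\Hp{r-1}{\Ss}$, respectively; hence so is their composition. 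This establishes $\phi\in\rmC^\infty(\Hp{r}{\Ss},\Hp{r-1}{\Ss}^2)$.

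Finally, the chain rule yields
\begin{equation*}
\partial\phi_i(f_0)[h]=(g_i'\circ f_0')\,h',\qquad h\in\Hp{r}{\Ss}.
\end{equation*}
A direct computation gives $g_1'(x)=-x(1+x^2)^{-3/2}$ and $g_2'(x)=(1+x^2)^{-3/2}$, which are exactly the formulas for $a_1(f_0)$ and $a_2(f_0)$ stated in \eqref{eq:def_ai}. The Banach algebra property together with the embedding $\Hp{r-1}{\Ss}\hookrightarrow\rmC(\Ss)$ also guarantees that $a_i(f_0)\in\Hp{r-1}{\Ss}$ and that multiplication by $a_i(f_0)$ defines a bounded operator on $\Hp{r-1}{\Ss}$, so the representation $\partial\phi_i(f_0)=a_i(f_0)\tfrac{d}{d\xi}\in\mcL(\Hp{r}{\Ss},\Hp{r-1}{\Ss})$ is justified.

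The only nontrivial ingredient is the smoothness of the Nemytskii operator $N_g$ on $\Hp{r-1}{\Ss}$; this is the step I expect to be the main obstacle and where I would either cite the classical result or, given that $g_1,g_2$ are real-analytic on $\RR$, provide a short self-contained power series argument exploiting that $\Hp{r-1}{\Ss}$ is a Banach algebra continuously embedded in $\Lp{\infty}{\Ss}$.
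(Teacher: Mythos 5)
Your proposal is correct and follows the standard route: factor $\phi_i$ as the bounded linear derivative $f\mapsto f'$ followed by a Nemytskii operator $N_{g_i}$ on the Banach algebra $\Hp{r-1}{\Ss}\hookrightarrow\rmC(\Ss)$, invoke smoothness of $N_{g_i}$ for smooth $g_i$, and apply the chain rule; the computed $g_i'$ match $a_i$. This is essentially the same argument the paper relies on by referring to \cite[Lemma~3.5]{Matioc.2021}. One minor imprecision worth noting: $g_1,g_2$ are not entire (they have singularities at $x=\pm i$), so the parenthetical power-series remark for entire functions does not apply verbatim; you would instead cite the general Nemytskii smoothness result for $\rmC^\infty$ outer functions, or argue locally around $\ran(f_0')$ using the embedding into $\rmC(\Ss)$.
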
	
		\begin{proof}
		 The proof is  similar to that of \cite[Lemma~3.5]{Matioc.2021}.
	\end{proof}

	Combining \eqref{eq:B_by_Bnmpq}, \eqref{eq:def_Psi}, \eqref{eq:def_Psii}, Lemma~\ref{Lem:deriv_phi}, and Corollary~\ref{C:CCC}, we conclude that
	  	\begin{equation}\label{eq:PSI_smooth}
			\Psi\in\rmC^\infty (\Hp{r}{\Ss},\Hp{r-1}{\Ss}).
		\end{equation}
		
\subsection{The Fr\'echet derivative}\label{Sec:33}	To apply the abstract parabolic theory from \cite[Chapter~8]{Lunardi.1995} in the context of~\eqref{eq:ev_eq}, which we now 
view as an evolution equation in the ambient space~${\Hp{r-1}{\Ss}}$,
	it    remains to show that the Fr\'echet derivative $\partial\Psi(f_0)\in\mcL(\Hp{r}{\Ss},\Hp{r-1}{\Ss})$ 
 generates a strongly continuous  analytic semigroup in $\mcL(\Hp{r-1}{\Ss})$.
 This is the content of the next result (where we use  notation from \cite{Amann.1995}).
 \begin{Proposition}\label{Prop:PSI_Gen}
		Given $f_0\in\Hp{r}{\Ss}$, we have
		\begin{equation}\label{eq:PSI_Gen}
			-\partial\Psi(f_0)\in\mcH(\Hp{r}{\Ss},\Hp{r-1}{\Ss}).
		\end{equation}
	\end{Proposition}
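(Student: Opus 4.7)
The plan is to decompose $\partial\Psi(f_0)$ into a principal part plus a compact (lower-order) remainder, localize the principal part by freezing coefficients via a fine partition of unity, compute the Fourier symbol of each frozen operator, and then invoke Amann's perturbation/localization framework for generators of analytic semigroups on $\Hp{r-1}{\Ss}$ with domain $\Hp{r}{\Ss}$.

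\textbf{Step 1 (decomposition).} Starting from \eqref{eq:def_Psi}--\eqref{eq:def_Psii} and Lemma~\ref{Lem:deriv_phi}, I would expand $\partial\Psi(f_0)[h]$ via Leibniz and the chain rule. The gravity operators $\Psi_3,\Psi_4$ are built solely from $B_0$ and $B_{n,m}^{p,q}$ with $p\geq 1$, hence by \eqref{eq:B_map_diff} they are smooth $\Hp{r}{\Ss}\to \mcL(\Hp{r-1}{\Ss},\Hp{r}{\Ss})$; their derivatives therefore contribute only compact maps $\Hp{r}{\Ss}\to\Hp{r-1}{\Ss}$. Inside $\Psi_1,\Psi_2$ all contributions coming from $B_3,B_4$ are lower order for the same reason, and differentiation of $B_1(f_0)$ or $B_2(f_0)$ in $f$ produces operators that, by Corollary~\ref{C:CCC}, remain bounded $\Hp{r-1}{\Ss}\to \Hp{r-1}{\Ss}$ but applied to $\phi(f_0)\in \Hp{r-1}{\Ss}$ (no gain of derivatives on $h$), hence compact into $\Hp{r-1}{\Ss}$ from $\Hp{r}{\Ss}$. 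The genuine principal part therefore comes from differentiating the arguments $\phi_1(f),\phi_2(f)$, where by \eqref{eq:deriv_phi} a derivative of $h$ is produced:
\begin{equation*}
\partial\Psi(f_0)[h]=-\frac{\sigma}{4\mu}\bigl(B_{0,1}^{0,0}(f_0)[\alpha(f_0)\,h']+2\,B_{1,1}^{0,0}(f_0)[\beta(f_0)\,h']\bigr)+\mathsf{K}(f_0)[h],
\end{equation*}
with coefficients $\alpha(f_0),\beta(f_0)\in\Hp{r-1}{\Ss}$ determined by $a_1(f_0),a_2(f_0),f_0',$ and $\omega(f_0)^{-1}$, and with $\mathsf{K}(f_0)\in\mcL(\Hp{r}{\Ss},\Hp{r-1}{\Ss})$ a compact perturbation.

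\textbf{Step 2 (localization).} Following the strategy enabled by Lemma~\ref{Lem:Cnm_approx_a}, I would fix $\ve>0$ and a finite partition of unity $\{\pi_j^\ve\}$ subordinate to a cover of $\Ss$ by intervals of length $O(\ve)$. On the support of each $\pi_j^\ve$, replace $f_0$ by its affine tangent at a center $\xi_j$, write $\tau_j:=f_0'(\xi_j)$, and use Lemma~\ref{Lem:Cnm_approx_a} together with Corollary~\ref{C:CCC} and the $B$-vs-$C$ comparison to show that, modulo an error of norm $o_\ve(1)$ in $\mcL(\Hp{r}{\Ss},\Hp{r-1}{\Ss})$,
\begin{equation*}
\pi_j^\ve\,\partial\Psi(f_0)=\pi_j^\ve\,\mcA_{\tau_j}+\text{lower order},\qquad \mcA_{\tau_j}:=-\frac{\sigma}{4\mu}\bigl(\alpha(f_0)(\xi_j)\,C_{0,1}^0(\tau_j\id)[\,\cdot\,']+2\beta(f_0)(\xi_j)\,C_{1,1}^0(\tau_j\id)[\,\cdot\,']\bigr).
\end{equation*}

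\textbf{Step 3 (Fourier symbol).} Each frozen operator $C_{n,m}^0(\tau_j\id)$ is translation invariant and therefore a Fourier multiplier on $\Ss$. A direct computation (using $H[e^{ik\xi}]=-i\,\mathrm{sgn}(k)e^{ik\xi}$ and \eqref{eq:HT}) shows that the combined symbol of $\mcA_{\tau_j}$ acting on the Fourier mode $e^{ik\xi}$ equals $-\lambda(\tau_j)|k|$ with $\lambda(\tau_j)>0$ for every $\tau_j\in\RR$; the positivity stems from $\sigma>0$ and reflects the parabolic, surface-tension-driven regularization of the interface (the gravity terms have already been absorbed into $\mathsf K$). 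Consequently $-\mcA_{\tau_j}\in\mcH(\Hp{r}{\Ss},\Hp{r-1}{\Ss})$ with uniform resolvent estimates in the centers $\xi_j$.

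\textbf{Step 4 (assembly).} Combining the uniform resolvent estimates for the frozen operators with the partition of unity, standard commutator estimates, and the fact that the localization error and $\mathsf{K}(f_0)$ are lower order (compact from $\Hp{r}{\Ss}$ to $\Hp{r-1}{\Ss}$, hence relatively bounded with zero bound), one obtains, via Amann's perturbation result for generators in \cite{Amann.1995}, the resolvent estimate required for $-\partial\Psi(f_0)\in\mcH(\Hp{r}{\Ss},\Hp{r-1}{\Ss})$.

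\textbf{Main obstacle.} The crux is Step~3: after the many cancellations among the $B_i$-operators in \eqref{eq:B_by_Bnmpq}--\eqref{eq:def_Psii}, one must extract the correct principal symbol and verify that $\lambda(\tau)>0$ uniformly in $\tau\in\RR$. Achieving this requires carefully tracking which combinations of $\alpha(f_0)$, $\beta(f_0)$, $f_0'$, and $\omega(f_0)^{-1}$ survive, and computing the explicit Fourier symbols of $C_{0,1}^0(\tau\id)$ and $C_{1,1}^0(\tau\id)$; everything else is a bookkeeping consequence of the mapping properties \eqref{eq:B_map_diff} and the localization Lemma~\ref{Lem:Cnm_approx_a}.
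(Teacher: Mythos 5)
Your overall strategy --- decompose $\partial\Psi(f_0)$ into principal plus lower-order parts, localize via a partition of unity, freeze coefficients to obtain Fourier multipliers with favorable symbols, and assemble via Amann's framework --- is exactly the route the paper takes (Proposition~\ref{Prop:loc}, \eqref{eq:l-A_iso}--\eqref{eq:l-A}, and the proof of Proposition~\ref{Prop:PSI_Gen}). However, there are two genuine gaps.

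\textbf{The drift term cannot be absorbed into $\mathsf{K}$.} When you expand $\partial\Psi(f_0)$ from \eqref{eq:def_Psi} via Leibniz, the leading term of \eqref{der:Psi} is
\[
\frac{1}{4\mu}\big(\sigma\Psi_1(f_0)+\Theta\Psi_3(f_0)\big)\,h',
\]
a genuine first-order differential operator with nonconstant coefficients $\Psi_1(f_0),\Psi_3(f_0)\in\Hp{r-1}{\Ss}$. This map $\Hp{r}{\Ss}\to\Hp{r-1}{\Ss}$ is of the same order as $(-d^2/d\xi^2)^{1/2}$: it is bounded but \emph{not} compact, and it has relative bound $\approx\sup|\Psi_1(f_0)|$, not zero, with respect to the frozen multiplier you propose. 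Your decomposition silently places it in the compact remainder $\mathsf{K}(f_0)$; the claim ``compact, hence relatively bounded with zero bound'' fails for this term, and Step~4 breaks. The paper's frozen operator $\Aa_{j,\tau}=-\alpha_\tau(x_j^\ve)(-d^2/d\xi^2)^{1/2}+\beta_\tau(x_j^\ve)\,d/d\xi$ keeps the drift explicitly, with $\beta_\tau(x_j^\ve)$ encoding exactly $\frac{\tau}{4\mu}(\sigma\Psi_1(f_0)+\Theta\Psi_3(f_0))(x_j^\ve)$; sectoriality is then read off the full symbol $-\alpha|k|+i\beta k$, whose real part $-\alpha|k|$ is still coercive because $\alpha=\frac{\sigma}{4\mu}\omega^{-1}(\tau f_0)\geq\eta>0$. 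Your Step~3 ends with a symbol $-\lambda(\tau_j)|k|$ only because the drift has been dropped.

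\textbf{A priori resolvent bounds do not by themselves give invertibility.} Your Step~4 claims the resolvent estimate ``required for $-\partial\Psi(f_0)\in\mcH$'' follows from localization plus perturbation. But the localized bound \eqref{eq:l-Phi} only yields injectivity with closed range of $\lambda-\partial\Psi(f_0)$ for $\Rel\lambda$ large; surjectivity must come from somewhere. The paper gets it by introducing the homotopy $\Phi(\tau)$, $\tau\in[0,1]$, from the explicit generator $\Phi(0)=-\frac{\sigma}{4\mu}(-d^2/d\xi^2)^{1/2}$ (manifestly invertible after shifting) to $\Phi(1)=\partial\Psi(f_0)$, proving the estimate \eqref{eq:l-Phi} uniformly along the path, and invoking the method of continuity \cite[Proposition~I.1.1.1]{Amann.1995}. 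A generic appeal to ``Amann's perturbation result'' does not supply this, because $\mcA_{\tau_j}$ approximates $\partial\Psi(f_0)$ only locally (via cut-offs) and not in $\mcL(\Hp{r}{\Ss},\Hp{r-1}{\Ss})$-norm, so no direct perturbation argument applies. You should either build the homotopy explicitly or replace the appeal to perturbation theory by a continuity argument of this type.

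Two smaller remarks: first, the actual principal part \eqref{eq:T_ij} is considerably richer than the two-term expression you write in Step~1 (it involves $C_{n,m}^0$ with $m\in\{2,3\}$ and several coefficients built from $\phi_i$, $a_i$, $f_0'$); you flag this as bookkeeping, which is fair, but it will matter when verifying the final symbol. Second, the paper controls remainders by interpolation inequalities of the form $\|R[f]\|_{\nHp{r-1}}\lesssim\|f\|_{\nHp{r'}}$, $r'<r$, rather than invoking compactness per se; this is equivalent in effect but worth keeping in mind when you turn the sketch into estimates.
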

	
	For the remainder of this section and in Section~\ref{Sec:34}, we fix $f_0\in \Hp{r}{\Ss}$, $r\in(3/2,2)$. 
	The proof of Proposition \ref{Prop:PSI_Gen} requires some preparation. To start, we infer from \eqref{eq:def_Psi} that 
\begin{equation} \label{der:Psi}	
	\begin{aligned}
			\partial\Psi(f_0)[f]&= \frac{1}{4\mu}\big(\sigma \Psi_1(f_0)+ \Theta\Psi_3(f_0)\big)f'+\frac{\sigma}{4\mu}\big(f_0' \partial\Psi_1(f_0)-\partial\Psi_2(f_0)\big)[f]\\[1ex]
			&\quad+\frac{\Theta}{4\mu}\big(\partial\Psi_4(f_0)+f_0' \partial\Psi_3(f_0)\big)[f]+\frac{\Theta\ln(4)}{4\mu}\langle f\rangle, \qquad f\in \Hp{r}{\Ss}.
	\end{aligned}
	\end{equation}
	
	The terms on the second line of \eqref{der:Psi} are lower order perturbations.
	To quantify this, let~${r'\in(3/2,r)}$ be fixed in the following. 
	Recalling \eqref{eq:B_by_Bnmpq} and \eqref{eq:def_Psii},  Corollary~\ref{C:CCC} (with $r$ replaced by $r'$) yields
	\begin{equation*}
	\partial B_i(f_0)\in\mcL(\Hp{r'}{\Ss},\mcL(\Hp{r'-1}{\Ss},\Hp{r'}{\Ss})),\qquad i\in\{0,\, 5,\, 6\},
	\end{equation*}
	and therefore
\begin{equation}\label{eq:DPsi_34}
		\norm{\partial\Psi_i(f_0)[f]}_{\nHp{r-1}}\leq C\norm{f}_{\nHp{r'}},\qquad f\in\Hp{r}{\Ss},\quad i\in\{3,\, 4\}.
	\end{equation}
	Moreover, we clearly have
	\begin{equation}\label{eq:DPsi_lt}
		\norm{\langle f\rangle}_{\nHp{r-1}}\leq C\|f\|_1\leq C\norm{f}_{\nHp{r'}},\qquad f\in\Hp{r}{\Ss}.
	\end{equation}
	
	The first two terms on the right-hand side of \eqref{der:Psi} are easy to handle as they are first order differential operators, however the next two terms are more intricate.
	To analyze them we first compute, for some fixed $\varphi_0\in \Hp{r-1}{\Ss}$, by combining \eqref{eq:B_by_Bnmpq}, \eqref{eq:B=A+C}, \eqref{eq:Cnm_com_Hr-1_Hr-1}, \eqref{eq:Frechet_Bnmpq},    
	 Lemma~\ref{Lem:Anmq_Bnmpq_inf}, and Corollary~\ref{C:CCC}  that
	\begin{equation}\label{derbs}
	\begin{aligned}
		\partial B_1(f_0)[f][\varphi_0]&=-2\varphi_0C^0_{1,2}(f_0)[f']+R_1[f],\\
		\partial B_2(f_0)[f][\varphi_0]&= \varphi_0\big(C^0_{0,1}-2C_{2,2}^0\big)(f_0)[f']+R_2[f],\\
			\partial B_3(f_0)[f][\varphi_0]&= \varphi_0\big(C^0_{0,2}-3C_{2,2}^0-4C^0_{2,3}+4C_{4,3}^0\big)(f_0)[f']+R_3[f],\\
			\partial B_4(f_0)[f][\varphi_0]&= \varphi_0\big(2C^0_{1,2}-4C_{3,3}^0\big)(f_0)[f']+R_4[f] \\
	\end{aligned}
\end{equation}	 
for all $f\in \Hp{r}{\Ss}$, where 
	\begin{equation}\label{derbsrest}
		\|R_i[f]\|_{\nHp{r-1}}\leq   C\norm{f}_{\nHp{r'}},\qquad f\in\Hp{r}{\Ss},\quad 1\leq i\leq 4.
	\end{equation}
	Moreover,  \eqref{eq:B_by_Bnmpq}, \eqref{eq:B=A+C},   Lemma~\ref{Lem:Anmq_Bnmpq_inf}, and Corollary~\ref{C:CCC} entail that 
		\begin{equation}\label{aprbs}
	\begin{aligned}
		 B_1(f_0)[\varphi]&=C^0_{0,1}(f_0)[\varphi]+\tilde R_1[\varphi],\\
		 B_2(f_0)[\varphi]&=C^0_{1,1}(f_0)[\varphi] +\tilde R_2[\varphi],\\
		 B_3(f_0)[\varphi]&=  \big(C^0_{1,2}-C_{3,2}^0\big)(f_0)[\varphi]+\tilde R_3[\varphi],\\
		 B_4(f_0)[\varphi]&=  C_{2,2}^0(f_0)[\varphi]+ \tilde R_4[\varphi]\\
	\end{aligned}
\end{equation}	 
for all $\varphi\in \Hp{r-1}{\Ss}$, where 
	\begin{equation}\label{aprbsrest}
		\|\tilde R_i[\varphi]\|_{\nHp{r-1}}\leq   C\norm{\varphi}_{\nHp{r'-1}},\qquad \varphi\in\Hp{r-1}{\Ss},\quad 1\leq i\leq 4.
	\end{equation}
	Setting 
	\begin{equation}\label{eq:defa_iphi_i}
	a_i:=a_i(f_0)\qquad\text{and}\qquad \phi_i:=\phi_i(f_0),\quad i=1,\, 2,
	\end{equation}
	see \eqref{eq:defphi} and \eqref{eq:def_ai}, 
	we infer from \eqref{eq:def_Psii}, \eqref{eq:deriv_phi}, \eqref{derbs}-\eqref{eq:defa_iphi_i}, and  
the algebraic relation 
\[
C_{n,m}^{0}(f_0)+C_{n+2,m}^{0}(f_0)=C_{n,m-1}^{0}(f_0),\qquad m\geq 1,
\]  that
	\begin{equation}\label{eq:DPsi}
		\partial\Psi_i (f_0)[f]=T_i(f_0)[f]+T_{i,{\rm lot}}(f_0)[f],\qquad i=1,\,2,
	\end{equation}
	where
	\begin{equation}\label{eq:T_ij}
	\begin{aligned}
	T_1(f_0)[f]&:= (C_{0,2}^{0}-C_{2,2}^{0})(f_0)[(a_1-\phi_2-f_0' a_2)f']+C_{1,2}^{0}(f_0)[(3(\phi_1+f_0' a_1)+a_2)f']\\
	&\quad\,+C_{3,2}^{0}(f_0)[(\phi_1+f_0' a_1-a_2)f']\\
	 &\quad\,+ \phi_1\p{3f_0' C_{0,3}^{0}-6C_{1,3}^{0}-6 f_0' C_{2,3}^{0}+2C_{3,3}^{0}-f_0' C_{4,3}^{0}}(f_0)[f']\\
	 &\quad\,+\phi_2\p{C_{0,3}^{0}+6f_0' C_{1,3}^{0}-6C_{2,3}^{0}-2f_0' C_{3,3}^{0}+C_{4,3}^{0}}(f_0)[f'],\\
		T_2(f_0)[f]&:= (C_{1,2}^{0}-C_{3,2}^{0})(f_0)[(a_1+\phi_2-f_0' a_2)f'] -C_{0,2}^{0}(f_0)[(\phi_1+f_0' a_1-a_2)f']\\
		&\quad\,+C_{2,2}^{0}(f_0)[(\phi_1+f_0' a_1+3a_2)f']\\
	 &\quad\,+ \phi_1\p{C_{0,3}^{0}+6f_0' C_{1,3}^{0}-6 C_{2,3}^{0}-2f_0' C_{3,3}^{0}+C_{4,3}^{0}}(f_0)[f']\\
		&\quad\,+\phi_2\p{-f_0' C_{0,3}^{0}+2 C_{1,3}^{0}+6f_0' C_{2,3}^{0} -6 C_{3,3}^{0}-f_0' C_{4,3}^{0}}(f_0)[f']
	\end{aligned}
	\end{equation}
	and
\begin{equation}\label{eq:Ti_lot}
		\|T_{i,{\rm lot}}(f_0)[f]\|_{\nHp{r-1}}\leq C\norm{f}_{\nHp{r'}}, \qquad f\in\Hp{r}{\Ss},\quad i=1,\,2.
	\end{equation}

	\subsection{Localization of the Fr\'echet derivative}\label{Sec:34}
	Based on the formulas for~$\partial\Psi(f_0)$ provided in  Section~\ref{Sec:33} and inspired by the papers \cite{E94, ES95, Matioc.2019}, we prove in this section that the Fr\'echet derivative 
	 $\partial\Psi(f_0)$ can be locally approximated by certain Fourier multipliers which are themselves generators of strongly continuous analytic semigroups, see Proposition~\ref{Prop:loc} and~\eqref{eq:l-A_iso}-\eqref{eq:l-A}. 
The proof of Proposition~\ref{Prop:PSI_Gen} relies heavily on these properties  and concludes this section.

	To start, we choose for each $\ve\in(0,1)$ a set of functions $\{\pi_{j}^{\ve}: 1\leq j\leq N\}\subset \rmC^\infty(\Ss,[0,1])$, where the integer~$N=N(\ve)$ is sufficiently large,  such that
	\begin{equation}\label{eq:pi_jp}
	\begin{aligned}
		\bullet & \,\supp \pi_j^\ve= I_j^\ve+2\pi \ZZ\text{ with } I_j^\ve:= [x_j^\ve-\ve,x_j^\ve+\ve] \text{ and } x_j^\ve:=j\ve;\\
		\bullet & \, \sum_{j=1}^{N}\pi_j^\ve =1 \text{ in } \rmC^\infty(\Ss). 
	\end{aligned}
	\end{equation}	 
	We call $\{\pi_{j}^{\ve}: 1\leq j\leq N\}$ a $\ve$\emph{-partition of unity}. 
	Moreover,  associated to each $\ve$-partition of unity, we choose a further set $\{\chi_j^\ve: 1\leq j \leq N\}\subset\rmC^\infty(\Ss,[0,1])$ with
	\begin{equation}\label{eq:chi_jp}
	\begin{aligned}
		\bullet & \,\supp \chi_j^\ve =J_j^\ve+2\pi \ZZ \text{ with } J_j^\ve=[x_j^\ve-2\ve,x_j^\ve+2\ve] ;\\
		\bullet & \,\chi_j^\ve =1 \text{ on } \supp \pi_j^\ve.
	\end{aligned}
	\end{equation}
	Each $\ve$-partition of unity allows us to define a new norm    on $\Hp{s}{\Ss}$, $s\geq 0$, via the mapping
	 \[	 
\bigg[f\mapsto \sum_{j=1}^{N}\norm{\pi_j^\ve f}_{\nHp{s}}\bigg]	:  \Hp{s}{\Ss}\to\RR,
	 \]
	which is equivalent to the standard norm.
	Indeed, it is straightforward to show there exists a constant $c=c(\ve,s)\in(0,1)$ such that
	\begin{equation}\label{eq:norm_pi_jp}
		c\norm{f}_{\nHp{s}}\leq \sum_{j=1}^{N}\norm{\pi_j^\ve f}_{\nHp{s}}\leq c^{-1}\norm{f}_{\nHp{s}},\qquad f\in  \Hp{s}{\Ss}.
	\end{equation}

	Following \cite{Matioc.2021}, we introduce the continuous path
	$
		\Phi: [0,1]\to\mcL(\Hp{r}{\Ss},\Hp{r-1}{\Ss})
	$
	 defined by
	\begin{equation}\label{eq:Phi_tau}
	\begin{aligned}
		\Phi(\tau)&:=\frac{\tau}{4\mu}\big(\sigma \Psi_1(f_0)+ \Theta\Psi_3(f_0)\big)\dfx{}{\xi}+\frac{\sigma}{4\mu}\big(\tau f_0' \partial\Psi_1(\tau f_0)-\partial\Psi_2(\tau f_0)\big) \\[1ex]
			&\,\quad+\frac{\tau\Theta}{4\mu}\big(\partial\Psi_4(f_0)+f_0' \partial\Psi_3(f_0)\big) +\frac{\tau\Theta\ln(4)}{4\mu}\langle\, \cdot\,\rangle,\qquad \tau\in [0,1],	
	\end{aligned}
	\end{equation} 
which connects the Fr\'echet derivative~$\partial\Psi(f_0)=\Phi(1)$, see \eqref{der:Psi}, to the Fourier multiplier
\begin{equation}\label{eq:Phi0}
		\Phi(0)= -\frac{\sigma}{4\mu} H\circ\dfx{}{\xi}=-\frac{\sigma}{4\mu}\p{-\frac{\dx{}^2}{\dx{\xi}^2}}^{1/2},
	\end{equation}
	see \eqref{eq:HT}. 
	In \eqref{eq:Phi0}, we use the  fact that the periodic Hilbert transform $H$ is a Fourier multiplier with
symbol $(-i\, {\rm sign}(k))_{k\in\mathbb{Z}}$.
 
	 The homotopy $\Phi$ will be used to conclude invertibility of $\lambda-\Phi(1)$ from $\lambda-\Phi(0)$ for $\Rel\lambda$ large enough.
	   In the arguments below we use the estimate
		\begin{equation}\label{eq:Hr_BAlg}
			\norm{fg}_{\nHp{s}}\leq C(\norm{f}_\infty \norm{g}_{\nHp{s}}+\norm{g}_\infty \norm{f}_{\nHp{s}}),\qquad \text{with $s\in(1/2,1)$},
		\end{equation}
		which holds for all $f,g\in\Hp{s}{\Ss}$. 
		The following proposition shows that the operator $\Phi(\tau)$ can be locally approximated by certain Fourier multipliers for all $\tau\in[0,1]$.
	\begin{Proposition}\label{Prop:loc}
		Let $\gamma>0$ and $ 3/2<r'<r<2$. 
		Then, there exists $\ve\in(0,1)$ together with  an~$\ve$-partition of unity~${\{\pi_{j}^{\ve}: 1\leq j\leq N\}}$,  a constant $K=K(\ve)$, and bounded operators
		\begin{equation*}
			\Aa_{j,\tau}\in\mcL(\Hp{r}{\Ss},\Hp{r-1}{\Ss}),\qquad j\in\set{1,\dots,N}, \quad\tau\in[0,1],
		\end{equation*}
		such that
		\begin{equation}\label{eq:Phi_AA_approx}
			\norm{\pi_j^\ve \Phi(\tau)[f]-\Aa_{j,\tau}[\pi_j^\ve f]}_{\nHp{r-1}}\leq \gamma\norm{\pi_j^\ve f}_{\nHp{r}}+K\norm{f}_{\nHp{r'}},
		\end{equation}
		for all $j\in\{1,\dots,N\}$, $f\in\Hp{r}{\Ss}$, and $\tau\in[0,1]$. The operators $\Aa_{j,\tau}$ are defined by
		\begin{equation*}
			\Aa_{j,\tau}:= -\alpha_{\tau}(x_j^\ve)\p{-\frac{\dx{}^2}{\dx{\xi}^2}}^{1/2}+\beta_\tau (x_j^\ve)\frac{\dx{}}{\dx{\xi}},\qquad j\in\{1,\dots,N\},\quad\tau\in[0,1],
		\end{equation*}
		with the functions $\alpha_\tau$, $\beta_\tau$ given by (see \eqref{nutauzomega})
		\begin{equation}\label{alphabeta}
			\alpha_{\tau}:= \frac{\sigma}{4\mu}\omega^{-1}(\tau f_0) \qquad\text{and}\qquad \beta_\tau :=\frac{\tau}{4\mu}\big(\sigma \Psi_1(f_0)+ \Theta\Psi_3(f_0)\big).
		\end{equation}
	\end{Proposition}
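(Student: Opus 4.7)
The strategy is to split $\Phi(\tau)[f]$ into a \emph{principal} part, consisting of a first-order term and a combination of frozen-coefficient singular integrals, plus a \emph{remainder} that is a priori controlled by $\|f\|_{\nHp{r'}}$, hence absorbable into $K\|f\|_{\nHp{r'}}$ after localization with $\pi_j^\ve$. Invoking \eqref{der:Psi}, \eqref{eq:DPsi}, \eqref{eq:T_ij} (applied with $f_0$ replaced by $\tau f_0$ where appropriate), the remainder collects the contributions coming from $\partial\Psi_3(f_0)[f],$ $\partial\Psi_4(f_0)[f],$ $T_{i,\mathrm{lot}}(\tau f_0)[f]$, and the mean $\langle f\rangle$; these are controlled by \eqref{eq:DPsi_34}, \eqref{eq:DPsi_lt}, and \eqref{eq:Ti_lot} uniformly in $\tau\in[0,1]$, and multiplication by $\pi_j^\ve$ preserves the estimate thanks to \eqref{eq:Hr_BAlg}.

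For the first-order piece $\beta_\tau(\xi) f'$, I write
\[
\pi_j^\ve\beta_\tau f'= \beta_\tau(x_j^\ve)(\pi_j^\ve f)'+(\beta_\tau-\beta_\tau(x_j^\ve))\pi_j^\ve f'-\beta_\tau(x_j^\ve)(\pi_j^\ve)'f.
\]
The last summand is lower order (bounded in $\nHp{r-1}$ by $C\|f\|_{\nHp{r'}}$ since $(\pi_j^\ve)'$ multiplies into $\nHp{r'-1}$). For the middle summand, since $\beta_\tau\in\rmC(\Ss)$ uniformly in $\tau$ and $\mathrm{supp}\,\pi_j^\ve\subset J_j^\ve$ has diameter $2\ve$, the $\nLp{\infty}$-oscillation of $\beta_\tau$ on this support tends to $0$ as $\ve\to 0$; combined with \eqref{eq:Hr_BAlg} this yields a contribution bounded by $\gamma\|\pi_j^\ve f\|_{\nHp{r}}$, provided $\ve$ is chosen small enough.

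The heart of the argument is to localize the singular integral part
\[
\frac{\sigma}{4\mu}\big(\tau f_0'\,\partial\Psi_1(\tau f_0)[f]-\partial\Psi_2(\tau f_0)[f]\big),
\]
which by \eqref{eq:DPsi}--\eqref{eq:T_ij} is a finite linear combination of operators of the form $C_{n,m}^0(\tau f_0)[a\,f']$, where $a$ is a product of the functions $a_i(\tau f_0)$, $\phi_i(\tau f_0)$ and $\tau f_0'$. The localization lemma (Lemma~\ref{Lem:Cnm_approx_a}) gives, for each such term,
\[
\pi_j^\ve C_{n,m}^0(\tau f_0)[a f']=\frac{(\tau f_0'(x_j^\ve))^n}{(1+(\tau f_0'(x_j^\ve))^2)^{m}}\,a(x_j^\ve)\,H\big[(\pi_j^\ve f)'\big]+r_{j}[f],
\]
where, for $\ve$ sufficiently small, $\|r_{j}[f]\|_{\nHp{r-1}}\leq \gamma\|\pi_j^\ve f\|_{\nHp{r}}+K\|f\|_{\nHp{r'}}$. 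Summing, the rational-function identity $u^n/(1+u^2)^{m}+u^{n+2}/(1+u^2)^{m}=u^n/(1+u^2)^{m-1}$ (the frozen analogue of $C_{n,m}^0+C_{n+2,m}^0=C_{n,m-1}^0$) allows the many frozen symbols to telescope. Using $H\partial_\xi=(-\partial_\xi^2)^{1/2}$ and recalling \eqref{alphabeta}, the principal part becomes exactly $-\alpha_\tau(x_j^\ve)(-\partial_\xi^2)^{1/2}[\pi_j^\ve f]+\beta_\tau(x_j^\ve)\partial_\xi[\pi_j^\ve f]=\mathcal A_{j,\tau}[\pi_j^\ve f]$.

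The main obstacle is the algebraic collapse in the last step: one must verify that, with $u:=\tau f_0'(x_j^\ve)$ and the explicit values $\phi_1=(1+u^2)^{-1/2}-1$, $\phi_2=u(1+u^2)^{-1/2}$, $a_1=-u(1+u^2)^{-3/2}$, $a_2=(1+u^2)^{-3/2}$ at the frozen point, the weighted sum of frozen symbols arising from $T_1,\,T_2$ in \eqref{eq:T_ij} combines into the single coefficient $\frac{\sigma}{4\mu}(1+u^2)^{-1/2}=\alpha_\tau(x_j^\ve)$. This is an elementary but long rational computation in one variable. Once this is settled, \eqref{eq:Phi_AA_approx} follows, with uniformity in $\tau\in[0,1]$ granted by the joint continuity of $\tau\mapsto\alpha_\tau,\beta_\tau$ and of the bounds in Lemma~\ref{Lem:Cnm_approx_a}.
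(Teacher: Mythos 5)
Your proposal follows essentially the same three-step decomposition as the paper's proof: control the genuinely lower-order pieces by $K\|f\|_{\nHp{r'}}$ via \eqref{eq:DPsi_34}, \eqref{eq:DPsi_lt}, \eqref{eq:Ti_lot}; freeze the first-order coefficient $\beta_\tau$ by its small oscillation over $\supp\pi_j^\ve$; and localize the singular-integral part termwise through Lemma~\ref{Lem:Cnm_approx_a}. Two bookkeeping remarks. First, the frozen rational symbols arising from $\tau f_0'T_1(\tau f_0)-T_2(\tau f_0)$ must sum to $-\omega^{-1}(u)$ (with $u:=\tau f_0'(x_j^\ve)$), not $+\omega^{-1}(u)$; your final display $-\alpha_\tau(x_j^\ve)(-\partial_\xi^2)^{1/2}[\pi_j^\ve f]+\beta_\tau(x_j^\ve)\partial_\xi[\pi_j^\ve f]$ is consistent with this, but the sentence identifying the combined coefficient as $+\alpha_\tau(x_j^\ve)$ should carry the minus sign (one checks at $u=0$: only $C_{0,2}^0(0)[f']$ survives in $T_2$, giving $-H[\pi_j^\ve f']$ for the combination, as required to match $\Phi(0)=-\tfrac{\sigma}{4\mu}H\partial_\xi$). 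Second, Lemma~\ref{Lem:Cnm_approx_a} produces $H[\pi_j^\ve f']$, whereas the target is $H[(\pi_j^\ve f)']$; these differ by $H[(\pi_j^\ve)'f]$, which is absorbed into $K\|f\|_{\nHp{r'}}$, a step worth stating explicitly. The algebraic collapse you defer is a finite computation and is also left implicit in the paper, so it is not a gap.
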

	\begin{proof}
	Let $\ve\in(0,1)$ and let~${\{\pi_{j}^{\ve}: 1\leq j\leq N\}}$ be an~$\ve$-partition of unity with the associated set~$\{\chi_j^\ve: 1\leq j \leq N\} $ satisfying \eqref{eq:chi_jp}.
In the following, we use the symbol $C$ for constants that are independent of $\ve$, and denote constants that depend on $\ve$ by $K$.
Recalling \eqref{eq:DPsi_34} and \eqref{eq:DPsi_lt}, the algebra property of $\Hp{r-1}{\Ss}$ leads us to
\begin{equation}\label{Step1}
		\norm{\pi_j^\ve\p{\frac{\tau \Theta}{4\mu}\big(\partial\Psi_4(f_0)+f_0' \partial\Psi_3(f_0)\big)[f]+\frac{\tau \Theta\ln(4)}{4\mu}\langle f\rangle}}_{\nHp{r-1}}\leq K\norm{f}_{\nHp{r'}} 
	\end{equation}
	for all $j\in\{1,\dots,N\}$, $f\in\Hp{r}{\Ss}$, and $\tau\in[0,1]$.

Moreover, since $\Psi_k(f_0)\in \Hp{r-1}{\Ss}\hookrightarrow \rmC^{r-3/2}(\Ss)$, $k\in\{1,\, 3\}$, we use \eqref{eq:chi_jp}$_2$ and \eqref{eq:Hr_BAlg}  to derive that  
\begin{align*}
\norm{\pi_j^\ve \beta_\tau f'-\beta_\tau(x_j^\ve)(\pi_j^\ve f)' }_{\nHp{r-1}}&\leq C\norm{(\Psi_1(f_0)-\Psi_1(f_0)(x_j^\ve))(\pi_j^\ve f)'}_{\nHp{r-1}}\\
		&\quad+C\norm{(\Psi_3(f_0)-\Psi_3(f_0)(x_j^\ve))(\pi_j^\ve f)'}_{\nHp{r-1}}+K\|f\|_{\nHp{r-1}},
\end{align*}
	where 
	\begin{align*}
		&C\norm{  (\Psi_k(f_0)-\Psi_k(f_0)(x_j^\ve))(\pi_j^\ve f)'}_{\nHp{r-1}}\\
& \leq C\norm{\chi_j^\ve (\Psi_k(f_0)-\Psi_k(f_0)(x_j^\ve))}_{\infty}\norm{(\pi_j^\ve f)'}_{\nHp{r-1}}+K\norm{f}_{\nHp{r'}}\\
		& \leq \frac{\gamma}{4} \norm{\pi_j^\ve f}_{\nHp{r}}+K\norm{f}_{\nHp{r'}},\qquad k\in\{1,\, 3\},
		\end{align*}
for all $j\in\{1,\dots,N\}$, $f\in\Hp{r}{\Ss}$, and $\tau\in[0,1]$, provided that $\ve $ is sufficiently small, and therefore
\begin{equation}\label{Step2}
 \norm{\pi_j^\ve \p{\frac{\tau\sigma}{4\mu}\Psi_1(f_0)+\frac{\tau\Theta}{4\mu}\Psi_3(f_0)}f'-\beta_\tau(x_j^\ve)(\pi_j^\ve f)' }_{\nHp{r-1}}\leq \frac{\gamma}{2} \norm{\pi_j^\ve f}_{\nHp{r}}+K\norm{f}_{\nHp{r'}}.
\end{equation}

Finally,  \eqref{eq:DPsi}-\eqref{eq:Ti_lot} and repeated use of Lemma~\ref{Lem:Cnm_approx_a} lead us to 
\begin{equation}\label{Step3}
 \Big\|\pi_j^\ve \frac{\sigma}{4\mu}\big(\tau f_0' \partial\Psi_1(\tau f_0)-\partial\Psi_2(\tau f_0)\big)[f] -\alpha_\tau(x_j^\ve) H[(\pi_j^\ve f)'] \Big\|_{\nHp{r-1}}
 \leq \frac{\gamma}{2} \norm{\pi_j^\ve f}_{\nHp{r}}+K\norm{f}_{\nHp{r'}}
\end{equation}
 for all $j\in\{1,\dots,N\}$, $f\in\Hp{r}{\Ss}$, and $\tau\in[0,1]$, provided that $\ve $ is sufficiently small.
 
 Gathering \eqref{Step1}-\eqref{Step3}, the claim follows in view of \eqref{eq:Phi_tau}.
	\end{proof}

Since $\Psi_k(f_0)\in\Hp{r-1}{\Ss}$, $k\in\{1,3\}$,  there exists a constant~$\eta\in (0,1)$ such that the functions~$\alpha_\tau$ and~$\beta_\tau$ defined in~\eqref{alphabeta} satisfy 
	\begin{equation*}
		\eta \leq \alpha_\tau \leq \eta^{-1}\qquad\text{and}\qquad |\beta_\tau|\leq \eta^{-1},\quad \tau\in [0,1].
	\end{equation*}
	Introducing  the Fourier multiplier
	\begin{equation*}
		\Aa_{\alpha,\beta}:= -\alpha \p{-\frac{\dx{}^2}{\dx{\xi}^2}}^{1/2}+\beta\frac{\dx{}}{\dx{\xi}}	\in\mcL(\Hp{r}{\Ss},\Hp{r-1}{\Ss}),\qquad \alpha\in [\eta,\eta^{-1}],\quad \beta\in[-\eta^{-1}, \eta^{-1}], 
	\end{equation*}
	it is straightforward to show, by using Fourier analysis techniques, that for all $ \alpha\in [\eta,\eta^{-1}]$ and  $\beta\in[-\eta^{-1}, \eta^{-1}]$,
	\begin{equation}\label{eq:l-A_iso}
		\text{$\lambda-\Aa_{\alpha,\beta}:\Hp{r}{\Ss}\to\Hp{r-1}{\Ss}$ is an  isomorphism for all $\Rel\lambda\geq 1$.}
	\end{equation}
	Moreover, there exists a constant $\kappa_0 =\kappa_0(\eta)\geq 1$ with the property that for all $ \alpha\in [\eta,\eta^{-1}]$ and~${\beta\in[-\eta^{-1}, \eta^{-1}]}$,
	\begin{equation}\label{eq:l-A}
		\kappa_0\norm{(\lambda-\Aa_{\alpha,\beta})[f]}_{\nHp{r-1}}\geq \abs{\lambda}\,\norm{f}_{\nHp{r-1}}+\norm{f}_{\nHp{r}}, \qquad f\in\Hp{r}{\Ss},\quad \Rel\lambda\geq 1.
	\end{equation}
	The relations \eqref{eq:l-A_iso}-\eqref{eq:l-A} imply in particular that the operator $\Aa_{\alpha,\beta}$ generates a strongly continuous  analytic semigroup, cf. \cite[Section~I.1.2]{Amann.1995}.
	Moreover, together with Proposition~\ref{Prop:loc} and the interpolation  property
	\begin{equation}\label{eq:interpolation}
			[\Hp{r_0}{\Ss},\Hp{r_1}{\Ss}]_{\theta}=\Hp{(1-\theta)r_0+\theta r_1}{\Ss},\qquad \theta\in (0,1),\quad -\infty<r_0 \leq r_1<\infty,  
		\end{equation}
	 where~$[\cdot,\cdot]_\theta$ is the complex interpolation functor, they enable us to prove Proposition~\ref{Prop:PSI_Gen}.
	 
	\begin{proof}[Proof of Proposition~\ref{Prop:PSI_Gen}]
		Let $r' \in (3/2,r)$ and $\kappa_0\geq 1$ be the constant in \eqref{eq:l-A}.
		 We may use Proposition~\ref{Prop:loc} with $\gamma:= 1/2\kappa_0$ to find $\ve\in(0,1)$, an $\ve$-partition of unity~${\{\pi_{j}^{\ve}: 1\leq j\leq N\},}$ a constant~$K=K(\ve)>0$, and operators~$\Aa_{j,\tau}\in\mcL(\Hp{r}{\Ss},\Hp{r-1}{\Ss})$, $1\leq j\leq N$ and $\tau\in [0,1]$, satisfying
		\begin{equation*}
			2\kappa_0 \norm{\pi_j^\ve \Phi(\tau)[f]-\Aa_{j,\tau}[\pi_j^\ve f]}_{\nHp{r-1}}\leq \norm{\pi_j^\ve f}_{\nHp{r}}+2\kappa_0 K\norm{f}_{\nHp{r'}},\qquad f\in\Hp{r}{\Ss}.
		\end{equation*}
		Furthermore, \eqref{eq:l-A} yields for all $1\leq j\leq N$, $\tau\in [0,1]$, and $ \Rel\lambda\geq 1$
		\begin{equation*}
			2\kappa_0\norm{(\lambda-\Aa_{j,\tau
			})[\pi_j^\ve f]}_{\nHp{r-1}}\geq 2\abs{\lambda}\norm{\pi_j^\ve f}_{\nHp{r-1}}+2\norm{\pi_j^\ve f}_{\nHp{r}}, \qquad f\in\Hp{r}{\Ss}.
		\end{equation*}
		Combining the above inequalities, we  get
		\begin{equation*}
		\begin{aligned}
			2\kappa_0 \norm{\pi_j^\ve (\lambda-\Phi(\tau))[f]}_{\nHp{r-1}}&\geq 2\kappa_0 \norm{(\lambda-\Aa_{j,\tau})[\pi_j^\ve f]}_{\nHp{r-1}}
			-2\kappa_0 \norm{\pi_j^\ve \Phi(\tau) [f]-\Aa_{j,\tau}[\pi_j^\ve f]}_{\nHp{r-1}}\\
			&\geq 2\abs{\lambda} \norm{\pi_j^\ve f}_{\nHp{r-1}}+\norm{\pi_j^\ve f}_{\nHp{r}}-2\kappa_0 K \norm{f}_{\nHp{r'}}.
		\end{aligned}
		\end{equation*}
		 In view of  \eqref{eq:norm_pi_jp}, \eqref{eq:interpolation},  and Young's inequality  we conclude that there exist constants~${\kappa\geq 1}$ and~$\omega>1$ such that
		\begin{equation}\label{eq:l-Phi}
			\kappa\norm{(\lambda-\Phi(\tau))[f]}_{\nHp{r-1}}\geq \abs{\lambda}\,\norm{f}_{\nHp{r-1}}+\norm{f}_{\nHp{r}}
		\end{equation}
		for all $\tau\in [0,1]$, $\Rel\lambda \geq \omega$, and $f\in\Hp{r}{\Ss}$.
		
		Since $\omega-\Phi(0)=\omega-\Aa_{\sigma/4\mu,0}$ is an isomorphism, see \eqref{eq:Phi0} and \eqref{eq:l-A_iso}, the method of continuity, cf.~\cite[Proposition~I.1.1.1]{Amann.1995}, 
		together with \eqref{eq:l-Phi} implies that $\omega-\Phi(1)=\omega-\partial \Psi(f_0)$ is also an isomorphism.
		This property combined with  \eqref{eq:l-Phi} (with $\tau=1$) leads us to the desired conclusion, see \cite[Section~I.1.2]{Amann.1995}. 
	\end{proof}	

	\subsection{Proof of the main results}\label{Sec:35}
This section is devoted to the proof of the main results in Theorem~\ref{Thm:main} and Theorem~\ref{Thm:stability}.

	\begin{proof}[Proof of Theorem~\ref{Thm:main}]
		The proof follows from \eqref{eq:PSI_smooth} and Proposition~\ref{Prop:PSI_Gen}, by using the abstract parabolic theory in \cite[Chapter~8]{Lunardi.1995}.
		Given the substantial resemblance of the arguments to those  in the non-periodic case,  see the proof  of \cite[Theorem 3.2]{Matioc.2021}, we  refrain from presenting them herein.
	\end{proof}
	
	It remains to establish Theorem~\ref{Thm:stability}.
For this, we define the Hilbert space
		\begin{equation*}
			\hHp{s}{\Ss}:=\set{f\in\Hp{s}{\Ss}:\langle f\rangle =0},\qquad s\geq 0,
		\end{equation*}
and infer from  \eqref{eq:pres}  and \eqref{eq:def_Psi} that $\Psi(f)\in \hHp{r-1}{\Ss}$ for all $f\in \Hp{r}{\Ss}$, $r\in(3/2,2)$.
Therefore, the mapping 
\[
\widehat\Psi:=\Psi\big|_{\hHp{r}{\Ss}}:\hHp{r}{\Ss}\to \hHp{r-1}{\Ss}
\]
  is well-defined and smooth, see \eqref{eq:PSI_smooth}.
  Moreover, for initial data  $f_0\in\hHp{r}{\Ss}$, the evolution problem \eqref{eq:ev_eq}  is equivalent to
	\begin{equation}\label{eq:ev_eq_mf}
		\frac{\dx{f}}{\dx{t}}(t)=\widehat \Psi(f(t)), \qquad t>0,\quad f(0)=f_0,
	\end{equation}
	which is also of parabolic type. 
	 Indeed, given $f_0\in\hHp{r}{\Ss}$, the   Fr\'{e}chet derivative $\partial\widehat\Psi(f_0) $ is the generator of a strongly continuous  analytic semigroup in $\mcL(\hHp{r-1}{\Ss})$.
This is a consequence of \cite[Corollary I.1.6.3]{Amann.1995}  since, observing that $\hHp{s}{\Ss} $ 	 is the orthogonal complement of the set of constant functions in~$\Hp{s}{\Ss} $, $s\geq 0$, 
we may interpret $\partial \Psi(f_0) $ as the matrix operator
\[
\partial \Psi(f_0) =\begin{bmatrix}
\partial\widehat\Psi(f_0) &0\\
0&0 
\end{bmatrix}: \hHp{r}{\Ss}\oplus\RR\to\hHp{r-1}{\Ss}\oplus\RR.
\]
	
It thus remains to study the stability properties of the zero solution to \eqref{eq:ev_eq_mf}.
This is advantageous, because in contrast to $\partial\Psi(0)$, the Fr\'echet derivative  $\partial\widehat\Psi(0)$ does not have zero as an eigenvalue, when assuming \eqref{eq:Stab}, as the next lemma shows.
	\begin{Lemma}\label{Lem:Psi_0_spec}
		The spectrum $\sigma(\partial\widehat\Psi(0))$ of   $\partial\widehat\Psi(0)\in\mcL(\hHp{r}{\Ss},\hHp{r-1}{\Ss})$ is given by
		\begin{equation}\label{spec00}
			\sigma(\partial\widehat\Psi(0))=\set{-\frac{\sigma k^2+\Theta}{4\mu|k|} :k\in\NN}.
		\end{equation}
	\end{Lemma}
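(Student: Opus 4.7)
The plan is to show that $\partial\widehat\Psi(0)$ is a Fourier multiplier, compute its symbol explicitly, and read off the spectrum.

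\emph{Step 1: Reduction to two terms.} Using the product rule, \eqref{der:Psi} at $f_0 = 0$ simplifies drastically. Indeed, by \eqref{eq:def_Psii} and $\phi_1(0) = \phi_2(0) = 0$, we see that $\Psi_1(0) = \Psi_2(0) = \Psi_3(0) = \Psi_4(0) = 0$, so the $f'$ prefactors in \eqref{der:Psi} drop out. After passing to $\hat H^r$ (where $\langle f\rangle = 0$), the residual formula becomes
\begin{equation*}
    \partial\widehat\Psi(0)[f] = -\frac{\sigma}{4\mu}\partial\Psi_2(0)[f] + \frac{\Theta}{4\mu}\partial\Psi_4(0)[f].
\end{equation*}

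\emph{Step 2: Linearization of $\Psi_2$.} From Lemma~\ref{Lem:deriv_phi} and \eqref{eq:def_ai} we have $\partial\phi_1(0) = 0$ and $\partial\phi_2(0) = d/d\xi$. A direct inspection of the definitions \eqref{eq:Bnmpq} and \eqref{eq:B(f)} shows that $B_{n,m}^{p,q}(0) = 0$ whenever $n+q \geq 1$, because the product $\prod T_{[\xi,s]}b_i \prod \delta_{[\xi,s]}c_i$ vanishes identically when the $b_i, c_i$ are zero. Combined with \eqref{eq:B_by_Bnmpq} and \eqref{eq:HT}, this gives $B_1(0) = H$, $B_3(0) = 0$, and $B_4(0) = 0$. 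Applying the product rule to \eqref{eq:def_Psii} (all cross-terms with an explicit $\phi_i(0)$ or $f_0' = 0$ factor vanish), we obtain
\begin{equation*}
    \partial\Psi_2(0)[f] = B_1(0)[\partial\phi_2(0)[f]] = H[f'].
\end{equation*}

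\emph{Step 3: Linearization of $\Psi_4$.} A similar analysis applied to $\Psi_4(f) = (B_0(f)-B_6(f))[f] + B_5(f)[ff']$ uses the fact that $B_5(0) = B_6(0) = 0$ (again because $n+q\geq 1$) and that $\partial(ff')(0) = 0$, yielding $\partial\Psi_4(0)[f] = B_0(0)[f]$.

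\emph{Step 4: Fourier symbol of $B_0(0)$.} Setting $f = 0$ in \eqref{eq:B0_alt} gives
\begin{equation*}
    B_0(0)[\varphi](\xi) = \frac{1}{\pi}\int_{-\pi}^{\pi}\ln\left|\sin(s/2)\right|\,\varphi(\xi-s)\dx{s}.
\end{equation*}
Using the classical Fourier expansion $\ln|\sin(s/2)| = -\ln 2 - \sum_{k\geq 1}\cos(ks)/k$, convolution in Fourier coefficients shows that on $\hat H^r(\mathbb{S})$ (where the zeroth coefficient vanishes) $B_0(0)$ is a Fourier multiplier with symbol $-1/|k|$, $k \in \mathbb{Z}\setminus\{0\}$.

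\emph{Step 5: Assembly and spectrum.} Since $H \circ d/d\xi$ has Fourier symbol $|k|$, combining Steps 1--4 yields, for $f = \sum_{k\neq 0}\hat f_k e^{ik\xi} \in \hat H^r(\mathbb{S})$,
\begin{equation*}
    \partial\widehat\Psi(0)[f] = -\sum_{k\neq 0}\frac{\sigma k^2+\Theta}{4\mu|k|}\,\hat f_k\, e^{ik\xi}.
\end{equation*}
Thus $\partial\widehat\Psi(0)$, viewed as an unbounded operator on $\hat H^{r-1}(\mathbb{S})$ with domain $\hat H^r(\mathbb{S})$, is a Fourier multiplier with eigenvalues $\mu_k := -(\sigma k^2+\Theta)/(4\mu|k|)$ indexed by $k\in\mathbb{Z}\setminus\{0\}$. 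Because $|\mu_k|\sim \sigma |k|/(4\mu) \to\infty$ as $|k|\to\infty$, for any $\lambda\notin\{\mu_k\}$ the bounded multiplier $(\lambda-\mu_k)^{-1}$ defines an isomorphism $\hat H^{r-1}(\mathbb{S})\to\hat H^r(\mathbb{S})$, while for $\lambda = \mu_k$ the corresponding Fourier mode lies in the kernel. The spectrum is therefore the pure point spectrum $\{\mu_k : k\in\mathbb{Z}\setminus\{0\}\}$, and since $\mu_k = \mu_{-k}$, we recover~\eqref{spec00}.

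The main obstacle is the bookkeeping in Steps 2--3: one must verify rigorously which terms in the product-rule expansion of $\partial\Psi_2(0)$ and $\partial\Psi_4(0)$ survive, exploiting simultaneously $\phi_i(0) = 0$, $f_0' = 0$, and the vanishing $B_{n,m}^{p,q}(0) = 0$ for $n + q \geq 1$. The Fourier computation in Step 4 is then the decisive analytic input.
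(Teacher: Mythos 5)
Your proof is correct and follows essentially the same strategy as the paper: reduce $\partial\widehat\Psi(0)$ to $-\tfrac{\sigma}{4\mu}H\circ\tfrac{d}{d\xi}+\tfrac{\Theta}{4\mu}B_0(0)$ using $\phi_i(0)=0$, $f_0'=0$ and the vanishing of $B_{n,m}^{p,q}(0)$ whenever $n+q\geq 1$, then identify $B_0(0)$ as a Fourier multiplier with symbol $-1/|k|$ and read off the spectrum. The only cosmetic difference lies in Step~4: you invoke the classical expansion $\ln|\sin(s/2)|=-\ln 2-\sum_{k\ge1}\cos(ks)/k$ directly, whereas the paper obtains the same multiplier by integrating by parts to write $B_0(0)=H\circ S$ with $S$ the mean-zero antiderivative operator (whose symbol is $1/(ik)$), and the Hilbert transform (symbol $-i\,\mathrm{sign}(k)$); both give $-1/|k|$, so the two computations are interchangeable.
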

	\begin{proof}
In view of~\eqref{eq:HT}, \eqref{eq:B0_alt}, \eqref{eq:B_by_Bnmpq}, \eqref{eq:def_Psii}, \eqref{eq:defphi},  \eqref{der:Psi},  \eqref{eq:Phi0}, and Lemma~\ref{Lem:deriv_phi} we have
\[
\partial\Psi(0)=\frac{\Theta}{4\mu}B_0(0)-\frac{\sigma}{4\mu}\p{-\frac{\dx{}^2}{\dx{\xi}^2}}^{1/2}. 
\]
The operator $B_0(0)$ is actually also a Fourier multiplier.
Indeed, letting~${S\in\mcL(\hHp{r-1}{\Ss})}$ denote the operator which associates to each  function $f\in \hHp{r-1}{\Ss}$ its antiderivative,
 that is
 \[
S[f](\xi):=\int_0^\xi f(s)\,\dx{s}+\frac{1}{2\pi}\int_0^{2\pi} sf(s)\,\dx{s},\qquad \xi\in\Ss,
 \]
integration by parts leads to
\[
B_0(0)[f](\xi)= \frac{1}{2\pi}\int_{-\pi}^\pi\ln \big(\sin^2(s/2)\big) f(\xi-s)\dx{s}= H[S[f]](\xi),\qquad \xi\in\Ss.
\]
The relation \eqref{spec00} is now an immediate consequence of the latter relation. 
	\end{proof}
	We are now in a position to prove Theorem~\ref{Thm:stability}, which is based  on asymptotic theory for abstract parabolic problems from \cite[Chapter~9]{Lunardi.1995}.
	\begin{proof}[Proof of Theorem~\ref{Thm:stability}]
		In order to establish (i), let  \eqref{eq:Stab} be satisfied. Then, in view of Lemma~\ref{Lem:Psi_0_spec}, we have
		\begin{equation*}
			\sup\set{\Rel\lambda:\lambda\in\sigma(\partial\widehat\Psi(0))}\leq -\vartheta_0<0.
		\end{equation*}
	Therefore, the assumptions of \cite[Theorem~9.1.2]{Lunardi.1995} are fulfilled in the context of the evolution problem~\eqref{eq:ev_eq_mf} and, together with Theorem~\ref{Thm:main}, we conclude Theorem~\ref{Thm:stability}~(i).
	
	Concerning (ii), assume  now that $ \sigma+\Theta<0$.
	Then
	\begin{equation*}
	\left\{
	\begin{array}{lll}
	-\cfrac{\sigma +\Theta}{4\mu}\in\sigma_+(\partial\widehat\Psi(0)):=\sigma(\partial\widehat\Psi(0))\cap \{\lambda\in\CC\,:\,\Rel\lambda>0\};\\[1ex]
	\inf\{\Rel\lambda\,:\, \lambda\in \sigma_+(\partial\widehat\Psi(0))\}>0.
	\end{array}
	\right.
	\end{equation*}
	A direct application of \cite[Theorem~9.1.3]{Lunardi.1995} provides the desired instability result.
	\end{proof}

\appendix
\section{Some classes  of (singular) integral operators}\label{Sec:A}
	 In this section we establish several important mapping properties  for the (singular) integral operators~$B_{n,m}^{p,q},\, C_{n,m}$,  
	  and~$B_0$ introduced in  \eqref{eq:Bnmpq}, \eqref{eq:Cnm}, and~\eqref{eq:B0_alt}.
	  
	 We start by relating   the two families $B_{n,m}^{p,q}$ (with $p=0$) and $ C_{n,m}$.
	  To this end  we define for integers~${m,\,n,\,p,\,q\in\NN_0}$ that satisfy~$p\leq n+q+1$, $\ell\in\set{1,2}$, and Lipschitz continuous mappings
	  $\bfa=(a_1,\dots,a_m):\RR\to\RR^m,$ $\bfb=(b_1,\dots,b_n):\RR\to\RR^n,$~${\bfc=(c_1,\dots,c_q):\RR\to\RR^q}$  the integral operator
	\begin{equation}\label{eq:Anmlq}
	\begin{aligned}
		&A_{n,m}^{\ell,q}(\bfa\vert \bfb)[\bfc,\varphi](\xi)\\
		&:=\frac{1}{2\pi}\int_{-\pi}^{\pi}\sqp{\frac{\prod\limits_{i=1}^{n}\frac{\T{\xi,s}b_i}{\tss}\prod\limits_{i=1}^{q}\frac{\dg{\xi,s}{c_i}/2}{\tss}}{\prod\limits_{i=1}^{m}
		\sqp{1+\p{\frac{\T{\xi,s}a_i}{\tss}}^2}}\frac{1}{\tss^{\ell}}-\frac{\prod\limits_{i=1}^{n}\frac{\dg{\xi,s}{b_i}/2}{s/2}\prod\limits_{i=1}^{q}\frac{\dg{\xi,s}{c_i}/2}{s/2}}{\prod\limits_{i=1}^{m}
		\sqp{1+\p{\frac{\dg{\xi,s}{a_i}/2}{s/2}}^2}}\frac{1}{(s/2)^\ell}}\varphi(\xi-s)\dx{s},
	\end{aligned}
	\end{equation}
	where $\varphi\in\Lp{2}{\Ss}$ and $\xi\in\RR$ (see \eqref{eq:notation1} and \eqref{eq:notation2}).
The following relation
	\begin{equation}\label{eq:B=A+C}
		B_{n,m}^{0,q}(\bfa\vert \bfb)[\bfc,\varphi]=A_{n,m}^{1,q}(\bfa\vert \bfb)[\bfc,\varphi]+C_{n+q,m}(\bfa)[(\bfb,\bfc),\varphi],\qquad m,\,n,\,q\in\NN_0,
	\end{equation}
	and the fact that the operators $A_{n,m}^{\ell,q}$ are regularizing, see Lemma~\ref{Lem:Anmq_Bnmpq_inf}, will enable us to transfer mapping properties obtained for the operators $C_{n,m}$, see Section~\ref{Sec:A1},
	to the operators~$B_{n,m}^{0,q}$ (which have kernels with a higher degree of nonlinearity than the former). 
	 We note that   $A_{n,m}^{\ell,q}(\bfa\vert \bfb)[\bfc,\varphi]$ is $2\pi$-periodic  if   $\bfa,\, \bfb,$ and $\bfc$ have this property.

	Before establishing mapping properties for these operators, we collect below some useful elementary inequalities 
	 \begin{equation}\label{eq:tanh}
	 	\abs{\tanh(x)}\leq \abs{x}\quad \text{and}\quad \abs{\tanh(x)-x}\leq \abs{x \tanh ^2(x)},
	 \end{equation}
	 \begin{equation}\label{eq:tan}
	 	\abs{y}\leq\abs{\tan(y)} \quad \text{and}\quad \abs{\tan(y)-y}\leq \abs{y^2 \tan(y)},
	 \end{equation}
	 \begin{equation}\label{eq:deriv}
		\abs{\frac{\Tf{x,s}{d}}{\tss}}\leq \abs{\frac{\dg{x,s}{d}/2}{\tss}}\leq \abs{\frac{\dg{x,s}{d}}{s}}\cdot\bigg|\frac {s/2}{\tss}\bigg|\leq \|d'\|_\infty\bigg|\frac {s/2}{\tss}\bigg|\leq \|d'\|_\infty,\qquad d\in \Wpp{1}{\infty}{\Ss},
	\end{equation}
	for $x\in\RR$, $y\in(-\pi/2,\pi/2)$ and $0\neq s \in(-\pi,\pi)$.
	
	When estimating these operators, the standard  norm on $\Hp{r}{\Ss}$, defined by means of  the Fourier transform, is not so practical. 
	Instead of using this norm, we recall the classical identity~${\Hp{s}{\Ss}=\Wp{s}{\Ss}}$ for all $s\geq 0$, cf. e.g. \cite{Schmeisser.1987}. 
	For non-integer  $s>0$ it holds that  
	\begin{equation*}
		\Wp{s}{\Ss}:=\set{v\in\Wp{[s]}{\Ss}:[v]_{\nWp{s}}<\infty},\qquad s=[s]+\set{s},\quad [s]\in\NN_0,~\set{s}\in(0,1),
	\end{equation*}
where
	\begin{equation*}
		[v]_{\nWp{s}}^2:= \int_{-\pi}^\pi\int_{-\pi}^\pi\frac{\abs{v^{([s])}(\xi+y)-v^{([s])}(\xi)}^{2}}{|y|^{1+2\set{s}}}\dx{\xi}\dx{y} 
		= \int_{-\pi}^\pi\frac{\norm{\tau_y v^{([s])}-v^{([s])}}^2_2}{|y|^{1+2\set{s}}}\dx{y} 
	\end{equation*}
	and 
	\begin{equation}\label{eq:tauy}
	\tau_y v:= v(\cdot+y)
	\end{equation} 
	is the left shift operator. 
	The space $\Wp{s}{\Ss}$ is equipped with the norm
	\begin{equation*}
		\norm{v}_{\nWp{s}}^2:= \norm{v}_{\nWp{[s]}}^2+[v]_{\nWp{s}}^2.
	\end{equation*}
	Using Plancherel's identity, it is easy to verify that the norms $\norm{\cdot}_{\nHp{s}}$ and~$\norm{\cdot}_{\nWp{s}}$ are equivalent.

	\subsection{Estimates for the  operators \texorpdfstring{$C_{n,m}$}{Cnm}}\label{Sec:A1}
	In Lemma~\ref{Lem:Cnm_est} we gather some useful  mapping properties  of the singular integral operators~$C_{n,m}$.
	
	\pagebreak
	
	\begin{Lemma}\label{Lem:Cnm_est} Let $n,\,m\in\NN_{0}$, $\bfa=(a_1,\dots,a_m):\RR\to\RR^m $, and $\bfb=(b_1,\dots,b_n):\RR\to\RR^n$.
		\begin{itemize}
\item[\rm{(i)}] Given $\bfa\in \Wpp{1}{\infty}{\RR}^m$, there exists a constant $ C>0$ that depends only on $n,\, m$, and~$\norm{\bfa'}_\infty$ such that for all $\bfb\in \Wpp{1}{\infty}{\RR}^n$ and $\theta\in \RR$
we have
	 	\begin{equation}\label{eq:Cnm_L2_L2}
	 		\norm{C_{n,m}(\bfa)[\bfb,\cdot]}_{\mcL(\Lp{2}{\Ss},\Lp{2}{(\theta-\pi,\theta+\pi)})}\leq C \prod_{i=1}^{n}\norm{b_i'}_{\infty}.
	 	\end{equation}
	 	Moreover, $C_{n,m}\in\rmC^{1-}(\Wpp{1}{\infty}{\Ss}^m,\mcL^{n}_{sym}(\Wpp{1}{\infty}{\Ss},\mcL(\Lp{2}{\Ss})))$.

	 	\item[\rm{(ii)}]  Given $n\in\NN,$  $r\in(3/2,2)$, $\tau\in(5/2-r,1),$ and $\bfa\in\Hp{r}{\Ss}^n$, there exists a constant~$ C>0$ that depends
	 	only on $n,\,m,\,r$, and~$ \norm{\bfa}_{\nHp{r}}$ (and on $\tau$ in \eqref{eq:Cnm_com_L2_Hr-1}), such that for all~${\bfb\in\Hp{r}{\Ss}^n}$ and $\varphi\in\Hp{r-1}{\Ss}$ we have
	 	\begin{equation}\label{eq:Cnm_L2_Hr-1}
	 		\norm{C_{n,m}(\bfa)[\bfb,\varphi]}_2\leq C \norm{b_1'}_2 \norm{\varphi}_{\nHp{r-1}}\prod_{i=2}^{n}\norm{b_i'}_{\nHp{r-1}}
	 	\end{equation}
	 	and
	 	\begin{equation}\label{eq:Cnm_com_L2_Hr-1}
	 	\begin{aligned}
	 		&\norm{C_{n,m}(\bfa)[\bfb,\varphi]-\varphi C_{n-1,m}(\bfa)[b_2,\dots,b_n,b_1']}_2\\
	 		&\quad\leq C\norm{b_1}_{\nHp{\tau}}\norm{\varphi}_{\nHp{r-1}}\prod_{i=2}^{n} \norm{b_i'}_{\nHp{r-1}}.
	 	\end{aligned}
	 	\end{equation}
	 	\item[\rm{(iii)}]  Given  $r\in(3/2,2)$ and $\bfa\in\Hp{r}{\Ss}^m$, there exists a constant $C>0$ that depends only on $n,\,m,\,r$, and $\norm{\bfa}_{\nHp{r}}$ 
	 	such that for all~${\bfb\in\Hp{r}{\Ss}^n}$ and $\varphi\in\Hp{r-1}{\Ss}$ we have
	 	\begin{equation}\label{eq:Cnm_Hr-1_Hr-1}
	 		\norm{C_{n,m}(\bfa)[\bfb,\varphi]}_{\nHp{r-1}}\leq C \norm{\varphi}_{\nHp{r-1}}\prod_{i=1}^{n}\norm{b_i'}_{\nHp{r-1}}.
	 	\end{equation}
	 	\item[\rm{(iv)}]  Given $n\in\NN$, $r\in(3/2,2)$, $r'\in(3/2,r)$, and $\bfa\in\Hp{r}{\Ss}^m$, there exists a constant~$C>0$ that depends only on $n,\,m,\,r,\,r',$ and $\norm{\bfa}_{\nHp{r}}$ such that
	 	for all~${\bfb\in\Hp{r}{\Ss}^n}$ and~${\varphi\in\Hp{r-1}{\Ss}}$ we have
	 	\begin{equation}\label{eq:Cnm_com_Hr-1_Hr-1}
	 	\begin{aligned}
	 		&\norm{C_{n,m}(\bfa)[\bfb,\varphi]-\varphi C_{n-1,m}(\bfa)[b_2,\dots,b_n,b_1']}_{\nHp{r-1}}\\
	 		&\quad\leq C\norm{b_1}_{\nHp{r'}}\norm{\varphi}_{\nHp{r-1}}\prod_{i=2}^{n} \norm{b_i}_{\nHp{r}}.
	 	\end{aligned}
	 	\end{equation}
	 	\end{itemize}
	 \end{Lemma}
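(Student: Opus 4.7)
The operators $C_{n,m}$ are generalized Calder\'on commutators: after isolating the Cauchy-type factor $1/s$, the integrand is a product of $n$ uniformly bounded finite-difference quotients $\dg{\xi,s}{b_i}/s$ (controlled by $\|b_i'\|_\infty$ via \eqref{eq:deriv}) and $m$ denominator factors $\bigl[1+(\dg{\xi,s}{a_i}/s)^2\bigr]^{-1}\in(0,1]$. My plan is to establish (i) via classical $L^2$-boundedness of Calder\'on commutators, and then to bootstrap to (ii)--(iv) using the embedding $\Hp{r-1}{\Ss}\hookrightarrow\Lp{\infty}{\Ss}$ (valid since $r-1>1/2$) together with the finite-difference characterization of $\Wp{s}{\Ss}$ recalled above.

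\textbf{Proof of (i).} Each denominator factor is bounded by $1$ and, as a function of $a_i\in\Wpp{1}{\infty}{\Ss}$, is globally Lipschitz with constant controlled by $\|a_i'\|_\infty$, since $u\mapsto(1+u^2)^{-1}$ has bounded derivative. Fixing $\bfa$, the operator reduces to an $n$-th order Calder\'on commutator times a uniformly bounded multiplier, whose $L^2$-boundedness with operator norm $\lesssim \prod_i\|b_i'\|_\infty$ is the classical Coifman--McIntosh--Meyer theorem. The $C^{1-}$ dependence on $\bfa$ is then obtained by expanding $C_{n,m}(\bfa)-C_{n,m}(\tilde{\bfa})$ telescopically across the denominator factors, each summand being a Calder\'on commutator of the same degree with one extra bounded factor $\dg{\xi,s}{(a_i-\tilde a_i)}/s$ whose $L^\infty$-norm contributes $\|(a_i-\tilde a_i)'\|_\infty$.

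\textbf{Proofs of (ii)$_1$ and (iii).} For \eqref{eq:Cnm_L2_Hr-1} I would place $\varphi$ and $b_i'$ ($i\geq 2$) in $\Lp{\infty}{\Ss}$ by Sobolev embedding, reducing matters to a first-order singular integral in $b_1$ whose $L^2$-boundedness follows from (i) after trading the $L^\infty$-estimate of $\dg{\xi,s}{b_1}/s$ for the $L^2$-estimate $\|b_1'\|_2$ via a Cauchy--Schwarz argument in $s$. For \eqref{eq:Cnm_Hr-1_Hr-1} my plan is to exploit the difference norm $\|v\|_{\nWp{r-1}}^2\sim\|v\|_2^2+\int\|\tau_y v-v\|_2^2|y|^{-1-2(r-1)}\dx{y}$ together with the translation identity $\tau_y C_{n,m}(\bfa)[\bfb,\varphi]=C_{n,m}(\tau_y\bfa)[\tau_y\bfb,\tau_y\varphi]$, telescoping $\tau_y C_{n,m}-C_{n,m}$ one argument at a time. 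Each piece is then controlled by (i) combined with the H\"older-type modulus estimate $\|\tau_y g-g\|_\infty\lesssim|y|^{r-3/2}\|g\|_{\nHp{r-1}}$.

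\textbf{Main obstacle: the commutator identities \eqref{eq:Cnm_com_L2_Hr-1} and \eqref{eq:Cnm_com_Hr-1_Hr-1}.} The decisive ingredient is the pointwise algebraic splitting
\[
\frac{\dg{\xi,s}{b_1}}{s}\varphi(\xi-s)-\varphi(\xi)\,b_1'(\xi-s)=\varphi(\xi-s)\Bigl[\frac{\dg{\xi,s}{b_1}}{s}-b_1'(\xi-s)\Bigr]-\bigl[\varphi(\xi)-\varphi(\xi-s)\bigr]b_1'(\xi-s),
\]
which decomposes the commutator into two integrals each carrying an extra vanishing factor. Writing $\dg{\xi,s}{b_1}/s-b_1'(\xi-s)=\int_0^1[b_1'(\xi-\theta s)-b_1'(\xi-s)]\dx{\theta}$ exhibits the first bracket as a modulus of continuity of $b_1'$, while the second bracket is an increment of $\varphi$. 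The regularity gain produced by each cancellation is exactly what reduces the norm on $b_1$ from $\|b_1\|_{\nHp{r}}$ to $\|b_1\|_{\nHp{\tau}}$ (respectively $\|b_1\|_{\nHp{r'}}$), the threshold $\tau>5/2-r$ being the precise interpolation exponent for which the residual singular integral still lies in the $L^2$-framework of (i). The technical difficulty lies in verifying that after these cancellations the residual kernels can be recast in the $C_{n,m}$/$A_{n,m}^{\ell,q}$ framework, where the comparison \eqref{eq:B=A+C} becomes crucial, since it trades some of the $C_{n,m}$ operators for regularizing $A_{n,m}^{\ell,q}$ terms that are absorbed into the lower-order norm. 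Running the same decomposition inside the finite-difference machinery of the proof of (iii) then upgrades the $L^2$-commutator bound \eqref{eq:Cnm_com_L2_Hr-1} to the $\Hp{r-1}{\Ss}$-commutator bound \eqref{eq:Cnm_com_Hr-1_Hr-1}.
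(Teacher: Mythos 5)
The paper itself does not give an argument for this lemma: part (i) is cited from Matioc.2020 (Lemma~A.1), with a one-line translation identity handling $\theta\neq 0$; part (ii) is declared ``similar to'' Abels.2022, Lemma~4; and parts (iii)--(iv) are said to follow ``analogously'' to Abels.2022, Lemmas~5 and~6. Your sketch is therefore an attempted reconstruction of proofs that live in external references, so an approach-by-approach comparison with the paper is not possible. Judged on its own, your architecture is sound and consistent with what those references do --- Calder\'on-commutator $L^2$-theory for (i), the embedding $\Hp{r-1}{\Ss}\hookrightarrow\Lp{\infty}{\Ss}$, the translation identity, and the difference-quotient seminorm for (ii)$_1$ and (iii), and a pointwise algebraic commutator splitting (which you write down correctly) for (ii)$_2$ and (iv). You also omit the $\theta\neq 0$ reduction in (i), which the paper supplies via $C_{n,m}(\bfa)[\bfb,\varphi](\xi)=C_{n,m}(\tau_\theta\bfa)[\tau_\theta\bfb,\tau_\theta\varphi](\xi-\theta)$.

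Two of your steps are, however, imprecise enough to count as gaps. First, for \eqref{eq:Cnm_L2_Hr-1} you propose to ``trade the $L^\infty$-estimate of $\dg{\xi,s}{b_1}/s$ for the $L^2$-estimate $\norm{b_1'}_2$ via a Cauchy--Schwarz argument in $s$.'' After all other factors are placed in $L^\infty$, the residual operator still carries the singular kernel $\dg{\xi,s}{b_1}/s^2$ inside a principal value, and a pointwise absolute-value bound followed by Cauchy--Schwarz in $s$ does not survive the $\PV$: the cancellation at $s=0$ is essential, and some additional structure must be exploited (e.g.\ the representation $\dg{\xi,s}{b_1}/s=\int_0^1 b_1'(\xi-\theta s)\dx{\theta}$ together with Minkowski's integral inequality, or a dyadic split separating the diagonal singularity). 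Your sketch does not say which, and the naive Cauchy--Schwarz fails. Second, your heuristic that the first bracket in the commutator splitting ``exhibits a modulus of continuity of $b_1'$'' breaks down precisely in the case \eqref{eq:Cnm_com_L2_Hr-1}: there $b_1\in\Hp{\tau}{\Ss}$ with $\tau\in(5/2-r,1)\subset(1/2,1)$, so $b_1'\in\Hp{\tau-1}{\Ss}$ has strictly negative Sobolev order and admits no modulus of continuity whatsoever. The gain must be quantified through $L^2$-averages in $s$ and $\xi$ (the Besov-type seminorm encoded in $\norm{b_1}_{\nHp{\tau}}$), not pointwise. For (iv) your heuristic is at least consistent, since $r'>3/2$ forces $b_1'\in\rmC^{r'-3/2}(\Ss)$, but for (ii)$_2$ the step as you describe it cannot be run.
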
 
	 
	 \begin{proof}
	 	The claim (i) is established in \cite[Lemma~A.1]{Matioc.2020} in the case $\theta=0$.
	 	The result for~${\theta\neq0}$ is obtained from the result for~${\theta=0}$ via the identity
	 	\[C_{n,m}(\bfa)[\bfb,\varphi](\xi)=C_{n,m}(\tau_\theta\bfa)[\tau_\theta\bfb,\tau_\theta\varphi](\xi-\theta),\qquad \xi,\,\theta\in\RR.\]
	 	 Moreover, the proof of  (ii) is similar to that of \cite[Lemma~4]{Abels.2022} and we therefore omit it. 
	 	Finally,~(iii) and~(iv) can be established by arguing analogously as in the nonperiodic version of these results, cf. \cite[Lemma~5 and 6]{Abels.2022}. 
	\end{proof}
	
	\subsection{Estimates for the operators \texorpdfstring{$A_{n,m}^{\ell,q}$}{Anmlq} and \texorpdfstring{$B_{n,m}^{p,q}$}{Bnmpq}}\label{Sec:A2}
	We study the integral operators  $B_{n,m}^{p,q}$ and $A_{n,m}^{\ell,q}$ and show first in Lemma~\ref{Lem:Anmq_Bnmpq_inf} that 
	 $A_{n,m}^{\ell,q}$ regularizes, the same being true for~$B_{n,m}^{p,q}$   provided that~$p\geq1$, see Lemma~\ref{Lem:Bnmpq_H1_L2}.

	\begin{Lemma}\label{Lem:Anmq_Bnmpq_inf}
	Let $ n,\,m,\,p,\,\, q\in\NN_0$ with $1\leq p\leq n+q+1$, $\ell\in\set{1,2}$, $r\in(3/2,2)$, and
	 let~$(\bfa, \bfb,\bfc)\in\Wpp{1}{\infty}{\Ss}^{m+n+q}$ be given. Then
	\begin{equation}\label{eq:Anmq_Bnmpq_C}
A_{n,m}^{\ell,q}(\bfa\vert \bfb)[\bfc,\cdot],\ B_{n,m}^{p,q}(\bfa\vert \bfb)[\bfc,\cdot]\in\mcL(\Lp{1}{\Ss}, \rmC(\Ss)),
	\end{equation}
	and there exists  a constant $ C>0$ that depends only on $n,\,m,\, p,\, q$,   and $\norm{(\bfa',\bfb')}_\infty$ such that
	 for all $\varphi\in \Lp{1}{\Ss}$ we have
	\begin{equation}\label{eq:Anmq_Bnmpq_inf}
		\|A_{n,m}^{\ell,q}(\bfa\vert \bfb)[\bfc,\varphi]\|_\infty +\norm{B_{n,m}^{p,q}(\bfa\vert \bfb)[\bfc,\varphi]}_\infty \leq C\norm{\varphi}_1 \prod_{i=1}^{q}\norm{c_i'}_\infty.
	\end{equation}
	Moreover, if $(\bfa, \bfb,\bfc)\in\rmC^1(\Ss)^{m+n+q}$,  there exists  a constant $C>0$ that depends only on $n,\,m,\, q$,   and $\norm{(\bfa',\bfb')}_\infty$ such that
	 for all $\varphi\in\rmC(\Ss)$ we have
	\begin{equation}\label{eq:Anmq_Bnmpq_A1}
\|A_{n,m}^{1,q}(\bfa\vert \bfb)[\bfc,\varphi]\|_{ \rmC^1}  \leq C\norm{\varphi}_{\rm\infty} \prod_{i=1}^{q}\norm{c_i}_{ \rmC^1}.
	\end{equation}
\end{Lemma}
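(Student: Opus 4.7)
The plan is to show that for $p\geq 1$ the extra factor $\tss^p$ kills the $1/\tss$ singularity in the kernel of $B_{n,m}^{p,q}$, and that the subtraction defining $A_{n,m}^{\ell,q}$ produces a similarly regular kernel. The main tools are the elementary inequalities \eqref{eq:tanh}--\eqref{eq:deriv}, which provide pointwise control of the building blocks $\T{\xi,s}b_i/\tss$, $\dg{\xi,s}{c_i}/(2\tss)$, and also of their differences with the corresponding \emph{linear} analogues $\dg{\xi,s}{b_i}/(2\cdot s/2)$ and $\dg{\xi,s}{c_i}/s$.

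For $B_{n,m}^{p,q}$ with $p\geq 1$, I will bound the kernel pointwise: by \eqref{eq:deriv}, each factor in the numerator is bounded by $\|b_i'\|_\infty$ (or $\|c_i'\|_\infty$), the denominator is $\geq 1$, and $|\tss|^{p-1}$ is bounded on $(-\pi,\pi)$. Hence the kernel is bounded by $C\prod_{i}\|c_i'\|_\infty$ uniformly in $(\xi,s)$, giving immediately the $L^1\to L^\infty$ estimate in \eqref{eq:Anmq_Bnmpq_inf}. For $A_{n,m}^{\ell,q}$, I will apply a telescoping argument: the difference between the ``$\tan/\tanh$'' and ``linear'' versions of each factor is, by \eqref{eq:tanh} and \eqref{eq:tan}, of size $O(|s|^2)$ relative to the factor itself; together with the  identity $1/\tss^\ell - 1/(s/2)^\ell = O(|s|^{2-\ell})$ one obtains that the integrand of $A_{n,m}^{\ell,q}$ is bounded by $C\prod_i\|c_i'\|_\infty$ pointwise, yielding \eqref{eq:Anmq_Bnmpq_inf} for $A$. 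Continuity of the result in $\xi$ then follows from dominated convergence, since the kernel extends continuously across the diagonal $s=0$ once the singularity has been cancelled; this gives \eqref{eq:Anmq_Bnmpq_C}.

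For the $C^1$ estimate \eqref{eq:Anmq_Bnmpq_A1} on $A_{n,m}^{1,q}$, I will use a translation argument: after the change of variables $u=\xi-s$ and exploiting the $2\pi$-periodicity, write
\[
A_{n,m}^{1,q}(\bfa|\bfb)[\bfc,\varphi](\xi+h)-A_{n,m}^{1,q}(\bfa|\bfb)[\bfc,\varphi](\xi)=\frac{1}{2\pi}\int_{-\pi}^{\pi}\bigl[K(\xi+h,\xi+h-u)-K(\xi,\xi-u)\bigr]\varphi(u)\dx{u},
\]
where $K$ is the difference kernel of $A_{n,m}^{1,q}$. The task is then to show that the bracketed expression is bounded by $C|h|\prod_i\|c_i\|_{C^1}$ uniformly in $(\xi,u)$; this reduces to Lipschitz estimates on the same building blocks as before, now using the $C^1$ norms of $\bfa,\bfb,\bfc$. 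Dividing by $h$ and passing to the limit by dominated convergence then produces a continuous derivative with the asserted bound.

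The main obstacle is the bookkeeping in the second step: one must expand the products of fractions and isolate pairwise differences of the form $\T{\xi,s}{b_i}/\tss-\dg{\xi,s}{b_i}/s$ and $1/\tss^\ell - 1/(s/2)^\ell$, controlling each via \eqref{eq:tanh}--\eqref{eq:deriv} so that the surplus factor of $|s|^\ell$ exactly absorbs the $1/\tss^\ell$ denominator. The same algebra, applied to finite differences in $\xi$, powers the Lipschitz step needed for \eqref{eq:Anmq_Bnmpq_A1}.
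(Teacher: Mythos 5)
Your treatment of the kernel bounds for $B_{n,m}^{p,q}$ with $p\geq 1$ and for $A_{n,m}^{\ell,q}$ (and hence the $\Lp{1}{\Ss}\to\Lp{\infty}{\Ss}$ estimate and the continuity claim) is essentially identical to the paper's: the ``telescoping'' of the products is the same algebra as the paper's Lipschitz bound on the auxiliary function $F$ from \eqref{eq:def_F}, using precisely \eqref{eq:tanh}--\eqref{eq:deriv}. Also note that the kernel does \emph{not} in general extend continuously across $s=0$ when $\ell=2$ or when the data is only Lipschitz; continuity of the parameter integral in $\xi$ already follows from boundedness of the kernel and dominated convergence, so that mis-statement is harmless.

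For the $\rmC^1$ estimate \eqref{eq:Anmq_Bnmpq_A1} there is, however, a genuine gap. Your starting identity
\[
A_{n,m}^{1,q}(\bfa|\bfb)[\bfc,\varphi](\xi)=\frac{1}{2\pi}\int_{-\pi}^{\pi}K(\xi,\xi-u)\varphi(u)\,\dx{u}
\]
tacitly uses $2\pi$-periodicity of $s\mapsto K_A(\xi,s)$. This fails: the ``$\tan$/$\tanh$'' part of $K_A$ is $2\pi$-periodic in $s$, but the subtracted ``linear'' part with the $(s/2)^\ell$ denominator is not. After the substitution $u=\xi-s$ the domain is $(\xi-\pi,\xi+\pi)$, which moves with $\xi$; differentiating in $\xi$ therefore produces the boundary contribution $\frac{1}{2\pi}\big(K_A(\xi,-\pi)-K_A(\xi,\pi)\big)\varphi(\xi-\pi)$, which the paper's formula \eqref{eq:deriv_A} records explicitly. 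Your increment formula silently discards this term, so the dominated-convergence passage would produce the wrong limit and the argument as written does not close (one obtains only that the function is Lipschitz, not a correct identification of the derivative). The term is bounded by $C\|\varphi\|_\infty$, so the final estimate is unaffected once the boundary contribution is reinstated, but you must include it. Relatedly, you should still verify that the diagonal derivative $(\partial_\xi+\partial_s)K_A$ is uniformly bounded near $s=0$; this requires the same cancellation between the two parts of $K_A$ and leads, after bookkeeping, back to operators of type $A_{n,m}^{2,q}$ and $B_{n,m}^{1,q}$ with bounded kernels, as in the paper's \eqref{eq:deriv_A} — so the paper's integration-by-parts formulation is not just an alternative route, it is the organizing device that makes this cancellation transparent.
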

\begin{proof}
	To start, we let $K_A$ and $K_B$ denote the kernels of the integral operators~$A_{n,m}^{\ell,q}$ and~$B_{n,m}^{p,q}$, respectively, that is,
	\begin{equation}\label{eq:KA_KB}
	\begin{aligned}
		A_{n,m}^{\ell,q}(\bfa\vert \bfb)[\bfc,\varphi](\xi)&=\int_{-\pi}^\pi K_A(\xi,s)\varphi(\xi-s)\dx{s},\\
		B_{n,m}^{p,q}(\bfa\vert \bfb)[\bfc,\varphi](\xi)&=\int_{-\pi}^\pi K_B(\xi,s)\varphi(\xi-s)\dx{s}, \qquad \xi\in\RR.
	\end{aligned}
	\end{equation}
	We begin by establishing \eqref{eq:Anmq_Bnmpq_inf} for $B_{n,m}^{p,q}$.  
	Since $p\geq1$ and  $n+q+1\geq p$,  we infer from~\eqref{eq:deriv} that
	\begin{align*}
		|K_B(\xi,s)|&\leq\Bigg(\prod_{i=1}^{n}\norm{b_i'}_\infty\Bigg)\Bigg(\prod_{i=1}^{q}\norm{c_i'}_\infty\Bigg)
	 |s|^{p-1}\abs{\frac{s/2}{\tss}}^{n+q+1-p}\leq C\prod_{i=1}^{q}\norm{c_i'}_\infty
	\end{align*}	
	for $ \xi\in\RR$ and  $0\neq s \in(-\pi,\pi),$ which proves \eqref{eq:Anmq_Bnmpq_inf} for $B_{n,m}^{p,q}$.
	
	Since $A_{n,m}^{\ell,q}$ is linear in $c_i$, $1\leq i\leq q$, it suffices to establish the estimate \eqref{eq:Anmq_Bnmpq_inf} for  $A_{n,m}^{\ell,q}$ under the assumption that  $\|\bfc'\|_\infty\leq 1$. 
	Let $F:\RR^{n+q+m}\to\RR$ be the locally Lipschitz continuous function defined by
	\begin{equation}\label{eq:def_F}
F(x,y,z)=\frac{1}{2\pi}\frac{\bigg(\prod\limits_{i=1}^{n}x_i\bigg)\bigg(\prod\limits_{i=1}^{q}y_i\bigg)}{\prod\limits_{i=1}^{m}(1+z_i^2)} \qquad \text{for $(x,y,z)\in\RR^{n+q+m}$.}
	\end{equation}
Given $\xi\in\RR$, $s\neq 0$, and $\bfd=(d_1,\ldots, d_l) \in\Wpp{1}{\infty}{\Ss}^l$,   $l\in\NN$, we introduce the shorthand notation
\begin{equation}\label{eq:bfd}
\frac{\Tf{\xi,s}{\bfd}}{\tss}:=\Big(\frac{\Tf{\xi,s}{d_1}}{\tss},\ldots,\frac{\Tf{\xi,s}{d_l}}{\tss}\Big).
\end{equation}
 Together with \eqref{eq:deriv}, we may now estimate for $\xi\in\RR$ and $0\neq s\in(-\pi,\pi)$
	\begin{equation}\label{eq:K_split}
	\begin{aligned}
		|K_A(\xi,s)|&\leq C\bigg|\frac{1}{\tss^\ell}-\frac{1}{(s/2)^\ell}\bigg|\\[1ex]
		&\quad+\bigg|\frac{1}{(s/2)^\ell}\bigg|\, 
		\bigg|F\Big(\frac{\Tf{\xi,s}{\mathbf{b}}}{\tss}, \frac{\dg{\xi,s}{\bfc}/2}{\tss}, \frac{\Tf{\xi,s}{\mathbf{a}}}{\tss}\Big)-
		F\Big(\frac{\dg{\xi,s}{\bfb/2}}{s/2}, \frac{\dg{\xi,s}{\bfc}/2}{s/2}, \frac{\dg{\xi,s}{\bfa}/2}{s/2}\Big)\bigg|.
	\end{aligned}
	\end{equation}
In view of \eqref{eq:tanh} and \eqref{eq:tan}, we have
\begin{align*}
&\bigg|\frac{1}{\tss^\ell}-\frac{1}{(s/2)^\ell}\bigg|\leq 2|s|^{2-\ell},\qquad 0\neq s\in(-\pi,\pi),\, \ell\in\{1,\, 2\},\\
&\bigg|\frac{\Tf{\xi,s}{\mathbf{d}}}{\tss}-\frac{\dg{\xi,s}{\mathbf{d}/2}}{s/2}\bigg|
+\bigg|\frac{\dg{\xi,s}{\mathbf{d}}/2}{\tss}-\frac{\dg{\xi,s}{\mathbf{d}/2}}{s/2}\bigg|\leq C|s|^{2},\qquad 0\neq s\in(-\pi,\pi),\, \xi\in\RR,
\end{align*}
with $C$ depending only on $\|\mathbf{d}'\|_\infty.$
These estimates together with \eqref{eq:K_split} immediately imply that 
\begin{equation}\label{eq:KA_est}
\abs{K_A(\xi,s)}\leq C|s|^{2-\ell},\qquad 0\neq s\in(-\pi,\pi),\, \xi\in\RR,\, \ell\in\{1,\, 2\},
\end{equation}
and the desired estimate \eqref{eq:Anmq_Bnmpq_inf} for $ A_{n,m}^{\ell,q}$ follows.

Since for $\varphi\in\rmC(\Ss)$ the  continuity of parameter integrals implies that both $A_{n,m}^{\ell,q}(\bfa\vert \bfb)[\bfc,\varphi] $ and~${B_{n,m}^{p,q}(\bfa\vert \bfb)[\bfc,\varphi]}$ belong to~$ \rmC(\Ss)$, the density of $ \rmC(\Ss)$  in $\Lp{1}{\Ss} $ leads us to \eqref{eq:Anmq_Bnmpq_C}.

 It remains to establish \eqref{eq:Anmq_Bnmpq_A1}. To this end, we first assume that $\varphi\in\rmC^1(\Ss)$.
 Since
 \begin{align*}
 &K_A(\cdot, s)\varphi(\cdot-s)\in\rmC^1(\Ss),\qquad 0\neq s\in(-\pi,\pi),\\[1ex]
 &K_A(\xi,\cdot)\varphi(\xi-\cdot)\in\rmC^1([-\pi,\pi]), \qquad \xi\in\RR,
 \end{align*}
 Fubini's theorem and integration by parts imply that $ A_{n,m}^{1,q}(\bfa\vert \bfb)[\bfc,\varphi]$ is weakly differentiable with  
	\begin{equation*}
	\begin{aligned}
		&\big(A_{n,m}^{1,q}(\bfa\vert \bfb)[\bfc,\varphi]\big)'(\xi)\\
		&\qquad=\big(K_A(\xi,-\pi) -K_A(\xi,\pi)\big)\varphi(\xi-\pi)+\int_{-\pi}^\pi\big(\partial_\xi K_A+\partial_s K_A\big)(\xi,s)\varphi(\xi-s)\dx{s} 
	\end{aligned}
	\end{equation*}
 for  $\xi\in\RR$, hence, using the notation
 	\begin{equation}\label{eq:bfb_j}
	\begin{aligned}
		\bfb_j&:=( b_1,\dots,b_{j-1},b_{j+1},\dots,b_n),\qquad  1\leq j\leq n,\\
		\bfc_j&:=( c_1,\dots,c_{j-1},c_{j+1},\dots,c_q),\qquad 1\leq j\leq q,
	\end{aligned}
	\end{equation} 
	we have
 	\begin{equation}\label{eq:deriv_A}
	\begin{aligned}
		\big(A_{n,m}^{1,q}(\bfa\vert \bfb)[\bfc,\varphi]\big)'&=2\frac{1-(-1)^{n+q+1}}{2\pi}
		\frac{\prod_{i=1}^{n}(\dg{\cdot,\pi}{b_i}/\pi)\prod_{i=1}^{q}(\dg{\cdot,\pi}{c_i}/\pi)}{\prod_{i=1}^{m}\sqp{1+(\dg{\cdot,\pi}{a_i}/\pi)^2}}\frac{\varphi(\cdot-\pi)}{\pi}\\
		&\quad+\sum_{j=1}^{n}\frac{b_j'}{2}\Big(A_{n-1,m}^{2,q}(\bfa\vert \bfb_j)[\bfc,\varphi]-B_{n+1,m}^{1,q}(\bfa\vert \bfb,b_j)[\bfc,\varphi]\Big)\\
		&\quad+\sum_{j=1}^{q}\frac{c_j'}{2}A_{n,m}^{2,q-1}(\bfa\vert \bfb)[\bfc_j,\varphi]\\
		&\quad-\frac{n+q+1}{2}\Big(A_{n,m}^{2,q}(\bfa\vert \bfb)[\bfc,\varphi] +B_{n,m}^{1,q}(\bfa\vert \bfb)[\bfc,\varphi]\Big)\\
		&\quad+\sum_{j=1}^{m} \Big[A_{n+2,m+1}^{2,q}(\bfa,a_j\vert \bfb,a_j,a_j)[\bfc,\varphi]+B_{n+2,m+1}^{1,q}(\bfa,a_j\vert\bfb,a_j,a_j)[\bfc,\varphi]\\
		&\hspace{4em}-a_j'A_{n+1,m+1}^{2,q}(\bfa,a_j\vert\bfb,a_j)[\bfc,\varphi]\\
		&\hspace{4em}+a_j'B_{n+3,m+1}^{1,q}(\bfa,a_j\vert \bfb ,a_j,a_j,a_j)[\bfc,\varphi]\Big].
	\end{aligned}
	\end{equation}
	The remaining claim \eqref{eq:Anmq_Bnmpq_A1} follows now from the previous relation and \eqref{eq:Anmq_Bnmpq_C} in view of the density of~$\rmC^1(\Ss)$ in~$\rmC(\Ss)$.
\end{proof}

We next study the singular integral  operator  $B_{n,m}^{0,q}$.
\begin{Lemma}\label{Lem:Bnmq_L2_L2}
	Let $n,\,m,\, q\in\NN_0$ and let~$(\bfa, \bfb)\in\Wpp{1}{\infty}{\Ss}^{m+n}$ be given. 
	Then,	there exists a constant $C>0$ that depends only on $n,\,m,\,q$,  and $\norm{(\bfa',\bfb')}_\infty$ such that for all $\bfc\in\Wpp{1}{\infty}{\Ss}^{q}$ and~$\varphi\in\Lp{2}{\Ss}$ we have 
	\begin{equation}
		\norm{B_{n,m}^{0,q}(\bfa\vert \bfb)[\bfc,\varphi]}_2\leq C\norm{\varphi}_2 \prod_{i=1}^{q}\norm{c_i'}_\infty.
	\end{equation}
\end{Lemma}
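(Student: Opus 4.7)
The plan is to leverage the identity \eqref{eq:B=A+C}, which decomposes
\[
 B_{n,m}^{0,q}(\bfa\vert \bfb)[\bfc,\varphi] = A_{n,m}^{1,q}(\bfa\vert \bfb)[\bfc,\varphi]+C_{n+q,m}(\bfa)[(\bfb,\bfc),\varphi],
\]
so the singular part is isolated in the second summand while the first is a regular parameter integral. The desired $\Lp{2}{\Ss}$-bound then follows by estimating the two pieces separately with tools already established.

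First I would handle the singular summand $C_{n+q,m}(\bfa)[(\bfb,\bfc),\varphi]$ by applying Lemma~\ref{Lem:Cnm_est}~(i) (with $\theta=0$). That result yields
\[
 \norm{C_{n+q,m}(\bfa)[(\bfb,\bfc),\varphi]}_2 \leq C\prod_{i=1}^{n}\norm{b_i'}_\infty\prod_{i=1}^{q}\norm{c_i'}_\infty\,\norm{\varphi}_2,
\]
for a constant $C$ depending only on $n,\,m,\,q,$ and $\norm{\bfa'}_\infty$. Since the statement of the present lemma permits the overall constant to depend on $\norm{\bfb'}_\infty$, the factor $\prod\norm{b_i'}_\infty$ may be absorbed, giving a bound of the desired form $C\norm{\varphi}_2\prod\norm{c_i'}_\infty$.

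Next I would treat the regular part $A_{n,m}^{1,q}(\bfa\vert \bfb)[\bfc,\varphi]$ using Lemma~\ref{Lem:Anmq_Bnmpq_inf}. Applied with $\ell=1$, it provides
\[
 \norm{A_{n,m}^{1,q}(\bfa\vert \bfb)[\bfc,\varphi]}_\infty \leq C\norm{\varphi}_1\prod_{i=1}^{q}\norm{c_i'}_\infty,
\]
with $C$ depending on $n,\,m,\,q$ and $\norm{(\bfa',\bfb')}_\infty$. Since $\Ss$ has finite measure, the embeddings $\Lp{2}{\Ss}\hookrightarrow\Lp{1}{\Ss}$ and $\Lp{\infty}{\Ss}\hookrightarrow\Lp{2}{\Ss}$ combined with Cauchy--Schwarz convert this into the $\Lp{2}{\Ss}\to\Lp{2}{\Ss}$ estimate
\[
 \norm{A_{n,m}^{1,q}(\bfa\vert \bfb)[\bfc,\varphi]}_2 \leq 2\pi C\norm{\varphi}_2\prod_{i=1}^{q}\norm{c_i'}_\infty.
\]
Adding the two estimates and invoking \eqref{eq:B=A+C} completes the argument. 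I do not anticipate a genuine obstacle: the only conceptual point is that the PV singularity of $B_{n,m}^{0,q}$ is entirely captured by $C_{n+q,m}$ (whose $\Lp{2}{\Ss}$-boundedness was already established via the connection to truncated singular integrals in \cite[Lemma~A.1]{Matioc.2020}), while the difference $A_{n,m}^{1,q}$ is regularizing, so the bound is automatic.
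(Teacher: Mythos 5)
Your proof is correct and follows essentially the same route as the paper, which simply cites \eqref{eq:B=A+C}, Lemma~\ref{Lem:Anmq_Bnmpq_inf}, and Lemma~\ref{Lem:Cnm_est}~(i) for this claim; you have merely spelled out the intermediate $\Lp{1}$--$\Lp{\infty}$--$\Lp{2}$ conversions that the paper leaves implicit.
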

\begin{proof} 
The claim is  a straightforward consequence of \eqref{eq:B=A+C}, Lemma~\ref{Lem:Anmq_Bnmpq_inf}, and Lemma~\ref{Lem:Cnm_est}~(i).
\end{proof}

\begin{Lemma}\label{Lem:Bnmq_Hr-1_Hr-1}	
Let $n,\,m,\, q\in\NN_0$, 	$r\in(3/2,2)$, and $(\bfa,\bfb)\in\Hp{r}{\Ss}^{m+n}$ be given. 
Then, there exists a constant~$C>0$ that depends only on $n,\,m,\, q,\,r$,   and~$\norm{(\bfa,\bfb)}_{\nHp{r}}$ 
	 	such that for all~${\bfc\in\Hp{r}{\Ss}^q}$ and $\varphi\in\Hp{r-1}{\Ss}$ we have
	 		\begin{equation}\label{eq:Bnmq_Hr-1_Hr-1}
			\norm{B_{n,m}^{0,q}(\bfa\vert \bfb)[\bfc,\varphi]}_{\nHp{r-1}}\leq C\norm{\varphi}_{\nHp{r-1}}\prod_{i=1}^{q}\norm{c_i}_{\nHp{r}}.
		\end{equation}
	\end{Lemma}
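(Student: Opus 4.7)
The plan is to exploit the decomposition identity \eqref{eq:B=A+C}, which splits
\[
B_{n,m}^{0,q}(\bfa\vert\bfb)[\bfc,\varphi]
= A_{n,m}^{1,q}(\bfa\vert\bfb)[\bfc,\varphi] + C_{n+q,m}(\bfa)[(\bfb,\bfc),\varphi],
\]
and to bound each of the two summands in $\Hp{r-1}{\Ss}$ separately, using Lemma~\ref{Lem:Cnm_est}~(iii) for the (genuinely singular) $C$-term and the smoothing property of~$A_{n,m}^{1,q}$ established in Lemma~\ref{Lem:Anmq_Bnmpq_inf} for the $A$-term.

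For the $C$-term, \eqref{eq:Cnm_Hr-1_Hr-1} applied with the argument vector $(\bfb,\bfc)\in \Hp{r}{\Ss}^{n+q}$ in place of $\bfb$ yields
\[
\|C_{n+q,m}(\bfa)[(\bfb,\bfc),\varphi]\|_{\nHp{r-1}}
\leq C\,\|\varphi\|_{\nHp{r-1}}\prod_{i=1}^{n}\|b_i'\|_{\nHp{r-1}}\prod_{i=1}^{q}\|c_i'\|_{\nHp{r-1}},
\]
with $C$ depending only on $n,m,r,\|\bfa\|_{\nHp{r}}$. Since $\|b_i'\|_{\nHp{r-1}}\leq \|b_i\|_{\nHp{r}}\leq \|\bfb\|_{\nHp{r}}$ and $\|c_i'\|_{\nHp{r-1}}\leq \|c_i\|_{\nHp{r}}$, the factors involving $\bfb$ can be absorbed into a new constant depending on $\|(\bfa,\bfb)\|_{\nHp{r}}$, leaving precisely the $\prod_{i=1}^{q}\|c_i\|_{\nHp{r}}$ dependence required by~\eqref{eq:Bnmq_Hr-1_Hr-1}.

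For the $A$-term, I would first recall that, since $r\in(3/2,2)$, the embeddings $\Hp{r-1}{\Ss}\hookrightarrow \rmC(\Ss)$ (because $r-1>1/2$) and $\Hp{r}{\Ss}\hookrightarrow \rmC^{1}(\Ss)$ hold. In particular $\bfa,\bfb\in \rmC^1(\Ss)$ with norms controlled by $\|(\bfa,\bfb)\|_{\nHp{r}}$, so that estimate~\eqref{eq:Anmq_Bnmpq_A1} applies and gives
\[
\|A_{n,m}^{1,q}(\bfa\vert\bfb)[\bfc,\varphi]\|_{\rmC^{1}}
\leq C\,\|\varphi\|_{\infty}\prod_{i=1}^{q}\|c_i\|_{\rmC^{1}}
\leq C'\,\|\varphi\|_{\nHp{r-1}}\prod_{i=1}^{q}\|c_i\|_{\nHp{r}},
\]
where $C'$ depends on $n,m,q,r,\|(\bfa,\bfb)\|_{\nHp{r}}$. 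Finally, the embedding $\rmC^{1}(\Ss)\hookrightarrow \Hp{r-1}{\Ss}$ (valid since $r-1<1$) converts this into the desired $\Hp{r-1}{\Ss}$-estimate. Adding the two contributions yields~\eqref{eq:Bnmq_Hr-1_Hr-1}.

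I do not anticipate any substantial obstacle, since all the hard analytic work has already been done: Lemma~\ref{Lem:Anmq_Bnmpq_inf} captures the regularization of~$A_{n,m}^{\ell,q}$, and Lemma~\ref{Lem:Cnm_est}~(iii) captures the $\Hp{r-1}{\Ss}$-boundedness of the truly singular piece~$C_{n,m}$. The only point that needs a line of care is the bookkeeping of constants: one must verify that the dependence on $\bfa$ and $\bfb$ in both estimates can be absorbed into a constant depending on $\|(\bfa,\bfb)\|_{\nHp{r}}$, which follows directly from $\Hp{r}{\Ss}\hookrightarrow \rmC^{1}(\Ss)$ combined with the monotonicity of the bounds in the cited estimates.
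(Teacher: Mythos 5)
Your proposal is correct and follows essentially the same route as the paper: both rely on the splitting \eqref{eq:B=A+C}, on Lemma~\ref{Lem:Cnm_est}~(iii) to control $C_{n+q,m}(\bfa)[(\bfb,\bfc),\varphi]$ in $\Hp{r-1}{\Ss}$, and on the regularizing estimate for $A_{n,m}^{1,q}$ from Lemma~\ref{Lem:Anmq_Bnmpq_inf}. The paper cites \eqref{eq:Anmq_Bnmpq_inf} together with the derivative formula \eqref{eq:deriv_A}, which is precisely what produces \eqref{eq:Anmq_Bnmpq_A1}; you simply invoke the packaged estimate \eqref{eq:Anmq_Bnmpq_A1} directly, so there is no substantive difference.
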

	\begin{proof}
	The claim is an immediate consequence of \eqref{eq:B=A+C},  Lemma~\ref{Lem:Cnm_est}~(iii), \eqref{eq:Anmq_Bnmpq_inf} and~\eqref{eq:deriv_A}.
	\end{proof}

The next result shows that the (singular) integral operators $B_{n,m}^{p,q}$ are locally Lipschitz continuous with respect to~$(\bfa,\bfb,\bfc).$
\begin{Lemma}\label{Lem:Bnmpq_loc_Lip}
		Given $n,\,m,\,p,\,q\in\NN_0$ with $p\leq n+q+1,$ we have
		\begin{align}
			&\big[(\bfa,\bfb,\bfc)\mapsto B_{n,m}^{0,q}(\bfa\vert\bfb)[\bfc,\cdot]\big]\in\rmC^{1-}\big(\Wpp{1}{\infty}{\Ss}^{m+n+q},\mcL(\Lp{2}{\Ss})\big),\label{eq:Bnm0q_loc_Lip}\\
			&\big[(\bfa,\bfb,\bfc)\mapsto B_{n,m}^{p,q}(\bfa\vert\bfb)[\bfc,\cdot]\big]\in\rmC^{1-}\big(\Wpp{1}{\infty}{\Ss}^{m+n+q},\mcL(\Lp{1}{\Ss}, \rmC(\Ss))\big),\qquad p\geq 1.\label{eq:Bnmpq_loc_Lip}
		\end{align}
	\end{Lemma}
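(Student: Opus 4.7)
The proof is by a telescoping-plus-kernel-comparison argument, treating the three parameter groups $\bfa$, $\bfb$, $\bfc$ separately. The starting observation is that each $c_i$ enters the kernel of $B_{n,m}^{p,q}$ only through $\dg{\xi,s}{c_i}/\tss$, so $B_{n,m}^{p,q}(\bfa\vert\bfb)[\bfc,\varphi]$ is multilinear in $(c_1,\dots,c_q)$. A telescoping identity in $\bfc$ reduces estimation of $B[\bfc]-B[\tilde\bfc]$ to operators of exactly the same form with one $c_i$ replaced by $c_i-\tilde c_i$. For $p\geq 1$ these are bounded by \eqref{eq:Anmq_Bnmpq_inf}, and for $p=0$ by Lemma~\ref{Lem:Bnmq_L2_L2}; in both cases the Lipschitz factor~$\norm{c_i-\tilde c_i}_{\nWpp{1}{\infty}}$ appears naturally from the $\norm{c_i'-\tilde c_i'}_\infty$ on the right-hand sides.

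Next I would telescope in $\bfb$ and $\bfa$. For a single $b_i$-perturbation, I use the integral expansion
\begin{equation*}
\frac{\T{\xi,s}b_i-\T{\xi,s}\tilde b_i}{\tss}
=\frac{\dg{\xi,s}{b_i-\tilde b_i}}{2\tss}\int_0^1\cosh^{-2}\!\big((1-\theta)\dg{\xi,s}{\tilde b_i}/2+\theta\dg{\xi,s}{b_i}/2\big)\,\dx{\theta},
\end{equation*}
so that the difference becomes an average over $\theta\in[0,1]$ of an operator in the same $B_{n,m}^{p,q}$-family with one $b_i$ replaced by $b_i-\tilde b_i$ and two extra $a$-style factors $(1-\theta)\tilde b_i+\theta b_i$ introduced (coming from the $\cosh^{-2}$). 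A parallel telescoping in the $\bfa$-factors uses the identity
\begin{equation*}
\frac{1}{1+X^2}-\frac{1}{1+Y^2}=\frac{(Y-X)(Y+X)}{(1+X^2)(1+Y^2)},\qquad X=\frac{\T{\xi,s}a_i}{\tss},\ Y=\frac{\T{\xi,s}\tilde a_i}{\tss};
\end{equation*}
the factor $Y-X$ is handled as in the $b_i$ case, while $Y+X$ and the denominators are absorbed into additional $a$-factors of the required structure. In both cases the resulting operator is again of $B_{n,m}^{p,q}$-type (with adjusted indices), and the key kernel bound from the proof of Lemma~\ref{Lem:Anmq_Bnmpq_inf} applies; Fubini gives the desired Lipschitz estimate after integration in $\theta$.

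For the $p\geq 1$ statement \eqref{eq:Bnmpq_loc_Lip}, this concludes the argument via \eqref{eq:Anmq_Bnmpq_inf}, yielding local Lipschitz dependence in $\mcL(\Lp{1}{\Ss},\rmC(\Ss))$. For \eqref{eq:Bnm0q_loc_Lip}, I would invoke the decomposition $B_{n,m}^{0,q}=A_{n,m}^{1,q}+C_{n+q,m}$ from \eqref{eq:B=A+C}: the $A$-part admits the uniform kernel bound \eqref{eq:KA_est} and the same mean-value/telescoping argument delivers local Lipschitz continuity even in $\mcL(\Lp{1}{\Ss},\rmC(\Ss))\hookrightarrow\mcL(\Lp{2}{\Ss})$, while the $C$-part is precisely the setting of Lemma~\ref{Lem:Cnm_est}\,(i), which already furnishes the $\rmC^{1-}$ dependence on $\bfa$ (and linearity in $\bfb$).

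The main technical obstacle is the $A$-part of the $p=0$ decomposition: the kernel $K_A$ couples the $\tss$- and $(s/2)$-normalizations, so showing that each telescoped difference still satisfies the $|s|^{2-\ell}$ bound requires re-running the computation \eqref{eq:K_split}--\eqref{eq:KA_est} in the presence of perturbed arguments. This is not a genuine obstruction but purely bookkeeping: the Lipschitz dependence of the map $(\bfa,\bfb,\bfc)\mapsto K_A$ is inherited from the Lipschitz dependence of the composite function $F$ in \eqref{eq:def_F} on its arguments (which remain uniformly bounded by $\norm{(\bfa',\bfb',\bfc')}_\infty$ by \eqref{eq:deriv}), and the perturbed kernel satisfies an estimate of the same form multiplied by $\norm{(\bfa-\tilde\bfa,\bfb-\tilde\bfb,\bfc-\tilde\bfc)}_{\nWpp{1}{\infty}}$.
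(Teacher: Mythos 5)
Your telescoping in $\bfa$, $\bfb$, $\bfc$ and the reduction of the $\bfa$-perturbation to a $\bfb$-perturbation match the paper's strategy, and the reduction to a single $b$-argument perturbation is the same key step. Where you diverge---and where a genuine gap appears---is in handling that single perturbation. Your mean-value identity is correct as written, but the claim that $\int_0^1\cosh^{-2}\big((1-\theta)\dg{\xi,s}{\tilde b_i}/2+\theta\dg{\xi,s}{b_i}/2\big)\,\dx{\theta}$ introduces ``$a$-style factors'' is false: an $a$-style factor in \eqref{eq:Bnmpq} is $\big[1+(\T{\xi,s}a_i/\tss)^2\big]^{-1}$, whereas $\cosh^{-2}=1-\tanh^2$ is a different kernel that simply does not occur in the $B_{n,m}^{p,q}$-family. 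The operator you obtain therefore lies outside that family. For $p\geq1$ this is harmless, since the kernel remains pointwise bounded and the $L^1\to\rmC$ bound of Lemma~\ref{Lem:Anmq_Bnmpq_inf} still goes through; but for $p=0$ it is fatal as written: the product of a singular $B_{n,m}^{0,q}$-type kernel with the extra $\cosh^{-2}$ factor is covered by neither Lemma~\ref{Lem:Bnmq_L2_L2} nor Lemma~\ref{Lem:Cnm_est}\,(i), so the $L^2\to L^2$ estimate is not established. The paper instead uses an \emph{additive} split, $\T{\xi,s}d-\T{\xi,s}\tilde d=\dg{\xi,s}{(d-\tilde d)}/2-\big[(\dg{\xi,s}{d}/2-\T{\xi,s}d)-(\dg{\xi,s}{\tilde d}/2-\T{\xi,s}\tilde d)\big]$: the first piece is exactly $B_{n,m}^{p,q+1}(\bfa\vert\bfb)[\bfc,d-\tilde d,\varphi]$, estimated by Lemma~\ref{Lem:Bnmq_L2_L2} (resp.\ Lemma~\ref{Lem:Anmq_Bnmpq_inf} for $p\geq1$), while the second piece has a kernel bounded by $C\norm{d-\tilde d}_\infty$ via the elementary inequality $\abs{(x-\tanh x)-(y-\tanh y)}\leq(x^2+y^2)\abs{x-y}$. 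Your route could be repaired by writing $\cosh^{-2}=1-\tanh^2$ and absorbing $\tanh^2/\tss^2$ together with an index shift $p\mapsto p+2$, $n\mapsto n+2$ to land back in the $B$-family, but that essential step is missing.

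Separately, your final paragraph understates the difficulty of the $A$-part in the $p=0$ case: the bound $\abs{K_A}\leq C|s|^{2-\ell}$ in \eqref{eq:K_split}--\eqref{eq:KA_est} depends on cancellation between the $\tss$- and $(s/2)$-normalized arguments of $F$, and a naive Lipschitz estimate on the difference of two such kernels only gives something of order $\norm{(\bfa,\bfb,\bfc)-(\tilde\bfa,\tilde\bfb,\tilde\bfc)}_{\nWpp{1}{\infty}}/|s|$, which destroys the $|s|$-decay. Recovering $|s|^{2-\ell}$ with the Lipschitz factor requires rerunning the cancellation argument at the level of second differences, which is substantially more than bookkeeping---and is precisely what the paper's additive decomposition sidesteps, since it never needs a Lipschitz estimate on $K_A$ at all.
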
	
	\begin{proof}
		Given  $(\bfa,\bfb,\bfc),(\tilde\bfa,\tilde\bfb,\tilde\bfc)\in\Wpp{1}{\infty}{\Ss}^{m+n+q}$, $\varphi\in\rmC^\infty(\Ss)$, and $p\in\NN_0$, we have
		\begin{equation*}
		\begin{aligned}
			&B_{n,m}^{p,q}(\bfa\vert\bfb)[\bfc,\varphi]-B_{n,m}^{p,q}(\tilde\bfa\vert\tilde\bfb)[\tilde\bfc,\varphi]\\
			&= \sum_{i=1}^{q}B_{n,m}^{p,q}(\bfa\vert\bfb)[\tilde{c}_1,\dots,\tilde{c}_{i-1},c_i-\tilde{c}_i,c_{i+1},\dots,c_q,\varphi]\\
			&\quad+\sum_{i=1}^{n}\big(B_{n,m}^{p,q}(\bfa\vert \tilde{b}_1,\dots,\tilde{b}_{i-1},b_i,\dots,b_n)-B_{n,m}^{p,q}(\bfa\vert \tilde{b}_1,\dots,\tilde{b}_i,b_{i+1},\dots,b_n)\big)[\tilde\bfc,\varphi]\\
			&\quad+\sum_{i=1}^{m}\big(B_{n+2,m+1}^{p,q}(\tilde{a}_1,\dots,\tilde{a}_i,a_i,\dots,a_m\vert\tilde\bfb,\tilde{a}_i,\tilde{a}_i)\\
			&\hspace{4em}-B_{n+2,m+1}^{p,q}(\tilde{a}_1,\dots,\tilde{a}_i,a_i,\dots,a_m\vert\tilde\bfb,\tilde{a}_i,a_i)\big)[\tilde\bfc,\varphi]\\
			&\quad+\sum_{i=1}^{m}\big(B_{n+2,m+1}^{p,q}(\tilde{a}_1,\dots,\tilde{a}_i,a_i,\dots,a_m\vert\tilde\bfb,\tilde{a}_i,a_i)\\
			&\hspace{4em}-B_{n+2,m+1}^{p,q}(\tilde{a}_1,\dots,\tilde{a}_i,a_i,\dots,a_m\vert\tilde\bfb,a_i,a_i)\big)[\tilde\bfc,\varphi]. 
		\end{aligned}
		\end{equation*}
		The first term on the right-hand side may be estimated by using Lemma~\ref{Lem:Bnmq_L2_L2} if~$p=0$ and Lemma~\ref{Lem:Anmq_Bnmpq_inf} for~$p\geq 1$.
		For the remaining terms it suffices to show that, given~${d,\,\tilde{d}\in\Wpp{1}{\infty}{\Ss}}$, we have
		\begin{equation}\label{eq:B0_loc_Lip}
			\|B_{n+1,m}^{0,q}(\bfa\vert\bfb,d)[\bfc,\varphi]-B_{n+1,m}^{0,q}(\bfa\vert\bfb,\tilde{d} )[\bfc,\varphi]\|_2\leq C\|d-\tilde{d} \|_{\nWpp{1}{\infty}}\norm{\varphi}_2,
		\end{equation}
	for \eqref{eq:Bnm0q_loc_Lip}, respectively,
	 \begin{equation} \label{eq:Bp_loc_Lip}
			\|B_{n+1,m}^{p,q}(\bfa\vert\bfb,d)[\bfc,\varphi]-B_{n+1,m}^{p,q}(\bfa\vert\bfb,\tilde{d} )[\bfc,\varphi]\|_\infty\leq C\|d-\tilde{d}\|_{\nWpp{1}{\infty}}\norm{\varphi}_1,\qquad p\geq1,
		\end{equation}
	for \eqref{eq:Bnmpq_loc_Lip}, with a constant $C$ that depends only on $\|(\bfa,\bfb,\bfc, d,\tilde{d})\|_{\nWpp{1}{\infty}} $ and $n,\, m,\, p,\, q.$ 
		
		To show \eqref{eq:B0_loc_Lip}-\eqref{eq:Bp_loc_Lip}, we infer from the fundamental theorem of calculus that
		\begin{equation}\label{eq:x-tanh_Lip}
			\abs{(x-\tanh(x))-(y-\tanh(y))}\leq(x^2+y^2)|x-y|,\qquad x,\,y\in\RR.
		\end{equation}
 With $F:\RR^{n+q+m}\to\RR$ denoting the smooth function defined in \eqref{eq:def_F} we then compute, by using also the notation \eqref{eq:bfd}, that
		\begin{equation*}
		\begin{aligned}
			&\hspace{-0.5cm}\big(B_{n+1,m}^{p,q}(\bfa\vert\bfb ,d)[\bfc,\varphi]-B_{n+1,m}^{p,q}(\bfa\vert\bfb ,\tilde{d} )[\bfc,\varphi]\big)(\xi)\\
			&=\PV\int_{-\pi}^{\pi}F\left(\frac{\T{\xi,s}\bfb}{\tss},\frac{\dg{\xi,s}{\bfc}/2}{\tss},\frac{\T{\xi,s}\bfa}{\tss}\right)\frac{\T{\xi,s}d-\T{\xi,s}\tilde{d} }{\tss}\frac{\varphi(\xi-s)}{\tss^{1-p}}\dx{s}\\
			&=B_{n,m}^{p,q+1}(\bfa\vert\bfb)[\bfc,d-\tilde{d} ,\varphi](\xi)-\int_{-\pi}^{\pi}K(\xi,s)\varphi(\xi-s)\dx{s}
		\end{aligned}
		\end{equation*}
		for $\xi\in\RR$ and $p\in\NN_0$, where, given $\xi\in\RR$ and $0\neq s\in(-\pi,\pi)$, we set
		\[
K(\xi,s):=F\left(\frac{\T{\xi,s}\bfb}{\tss},\frac{\dg{\xi,s}{\bfc}/2}{\tss},\frac{\T{\xi,s}\bfa}{\tss}\right)
			\frac{(\dg{\xi,s}d/2-\T{\xi,s}d)-(\dg{\xi,s}\tilde{d}/2-\T{\xi,s}\tilde{d}) }{\tss^{2-p}}.		
		\]
		The function $B_{n,m}^{p,q+1}(\bfa\vert\bfb)[\bfc,d-\tilde{d} ,\varphi]$ may be estimated by using Lemma~\ref{Lem:Bnmq_L2_L2} if~$p=0$ 
		and Lemma~\ref{Lem:Anmq_Bnmpq_inf} for~$p\geq 1$, and we are left to estimate the integral term.
		To this end we rely on~\eqref{eq:deriv} and~\eqref{eq:x-tanh_Lip} to obtain that $|K(\xi,s)|\leq C\|d-\tilde{d}\|_\infty$ for all $ \xi\in\RR$ and $0\neq s\in(-\pi,\pi)$, and therefore 
		\begin{align*}
		\abs{\int_{-\pi}^{\pi}K(\xi,s)\varphi(\xi-s)\dx{s}}\leq C\|d-\tilde{d}\|_\infty\|\varphi\|_1\leq C\|d-\tilde{d}\|_\infty\|\varphi\|_2,\qquad \xi\in\RR.
        \end{align*}	
        This completes the proof.	 
	\end{proof}

	Using Lemma~\ref{Lem:Bnmpq_loc_Lip}, we next prove that the operators $B_{n,m}^{p,q}$, with $p\geq 1$,  have a regularizing effect.
	\begin{Lemma}\label{Lem:Bnmpq_H1_L2}
		Let $n,\, m,\,p,\,\,q\in\NN_0$, ${1\leq p\leq n+q+1}$,  and~${(\bfa,\bfb,\bfc)\in\rmC^1(\Ss)^{m+n+q}}$.
		Then,   there exists a constant $C>0$ that depends only on $n,\,m,\,p,\,q,$ and $ \norm{(\bfa',\bfb')}_{\infty}$  such that for all~${\varphi\in\Lp{2}{\Ss}}$ we have
		\begin{equation}\label{eq:Bnmpq_H1_L2}
			\norm{B_{n,m}^{p,q}(\bfa\vert \bfb)[\bfc,\varphi]}_{\nHp{1}}\leq C\norm{\varphi}_{2}\prod_{i=1}^{q}\norm{c_i'}_{\infty}.
		\end{equation}
		Furthermore, given $r\in(3/2,2)$ and $(\bfa,\bfb,\bfc)\in\Hp{r}{\Ss}^{m+n+q}$, there exists a 
		constant $C>0$ that depends only on $n,\,m,\,p,\,q,$ and $ \norm{(\bfa,\bfb)}_{\nHp{r}}$ such that for all~$\varphi\in\Hp{r-1}{\Ss}$ we have
		\begin{equation}\label{eq:Bnmpq_Hr_Hr-1}
			\norm{B_{n,m}^{p,q}(\bfa\vert \bfb)[\bfc,\varphi]}_{\nHp{r}}\leq C\norm{\varphi}_{\nHp{r-1}}\prod_{i=1}^{q}\norm{c_i}_{\nHp{r}}.
		\end{equation}
	\end{Lemma}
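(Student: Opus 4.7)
The plan is to prove both \eqref{eq:Bnmpq_H1_L2} and \eqref{eq:Bnmpq_Hr_Hr-1} by first deriving an explicit formula for $(B_{n,m}^{p,q}(\bfa\vert\bfb)[\bfc,\varphi])'$, in the spirit of \eqref{eq:deriv_A}, and then bounding each resulting term by means of the results already at hand -- Lemma~\ref{Lem:Anmq_Bnmpq_inf}, Lemma~\ref{Lem:Bnmq_L2_L2}, and Lemma~\ref{Lem:Bnmq_Hr-1_Hr-1}. Because $p\geq1$, the kernel $K_B$ of $B_{n,m}^{p,q}$ is bounded on $\Ss\times(-\pi,\pi)$ (via \eqref{eq:deriv}, the surviving $\tss^{p-1}$ factor is harmless at $s=0$), so for $\varphi\in\rmC^\infty(\Ss)$ we may differentiate under the integral sign. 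Fubini's theorem together with $\partial_\xi\varphi(\xi-s)=-\partial_s\varphi(\xi-s)$ and integration by parts in $s$ yield
\begin{equation*}
\big(B_{n,m}^{p,q}(\bfa\vert\bfb)[\bfc,\varphi]\big)'(\xi)=\big(K_B(\xi,-\pi)-K_B(\xi,\pi)\big)\varphi(\xi-\pi)+\int_{-\pi}^{\pi}(\partial_\xi+\partial_s)K_B(\xi,s)\,\varphi(\xi-s)\,\mathrm{d}s.
\end{equation*}
A product-rule computation of $(\partial_\xi+\partial_s)K_B$, making repeated use of the identities $(\partial_\xi+\partial_s)\dg{\xi,s}{d}=d'(\xi)$, $(\partial_\xi+\partial_s)\T{\xi,s}{d}=\frac{d'(\xi)}{2}(1-\T{\xi,s}{d}^2)$, and $(\partial_\xi+\partial_s)\tss=\frac{1}{2}(1+\tss^2)$, realizes the integrand as a finite linear combination of terms $d_k'(\xi)\cdot B_{n',m'}^{p',q'}(\bfa\vert\tilde\bfb)[\tilde\bfc,\varphi](\xi)$ with coefficients $d_k'\in\{a_j',b_j',c_j'\}$, indices $p'\in\{0,1,\ldots,p+1\}$ and $q'\in\{q,q+1\}$, and auxiliary vectors $\tilde\bfb,\tilde\bfc$ built from the components of $\bfa,\bfb$. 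Any potentially singular $p'=0$ term necessarily carries an additional finite-difference factor created by the differentiation, so it is a bona fide $B^{0,q+1}$-type operator.

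For \eqref{eq:Bnmpq_H1_L2}, assume first $\varphi\in\rmC^\infty(\Ss)$ and apply the derivative formula. The boundary contribution is pointwise bounded by $C\|\varphi\|_\infty\prod_i\|c_i'\|_\infty$. Each interior term with $p'\geq 1$ is bounded in $\mathrm{L}^\infty(\Ss)\hookrightarrow\Lp{2}{\Ss}$ via Lemma~\ref{Lem:Anmq_Bnmpq_inf}, and each $p'=0$ interior term is bounded in $\Lp{2}{\Ss}$ via Lemma~\ref{Lem:Bnmq_L2_L2}. The coefficients $d_k'$ are uniformly bounded because $(\bfa,\bfb,\bfc)\in\rmC^1(\Ss)^{m+n+q}$, and summing yields $\|(B_{n,m}^{p,q}[\varphi])'\|_2\leq C\|\varphi\|_2\prod_i\|c_i'\|_\infty$. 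Combined with the direct $\Lp{2}{\Ss}$-bound on $B_{n,m}^{p,q}[\varphi]$ itself coming from Lemma~\ref{Lem:Anmq_Bnmpq_inf} (together with $\|\varphi\|_1\leq C\|\varphi\|_2$), this establishes \eqref{eq:Bnmpq_H1_L2} for $\varphi\in\rmC^\infty(\Ss)$; density of $\rmC^\infty(\Ss)$ in $\Lp{2}{\Ss}$ and linearity extend the estimate to arbitrary $\varphi\in\Lp{2}{\Ss}$.

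For \eqref{eq:Bnmpq_Hr_Hr-1}, I use the equivalent norm $\|u\|_{\nHp{r}}^2\sim\|u\|_2^2+\|u'\|_{\nHp{r-1}}^2$, which is valid for $r\in(1,2)$. The $\Lp{2}{\Ss}$-part is controlled by Lemma~\ref{Lem:Anmq_Bnmpq_inf} via $\|B_{n,m}^{p,q}[\varphi]\|_2\leq C\|B_{n,m}^{p,q}[\varphi]\|_\infty\leq C\|\varphi\|_1\leq C\|\varphi\|_{\nHp{r-1}}\prod_i\|c_i\|_{\nHp{r}}$. For the $\Hp{r-1}{\Ss}$-seminorm of the derivative, return to the formula derived above: each summand has the form $d_k'\cdot T_k[\varphi]$ with $d_k'\in\Hp{r-1}{\Ss}$ (since $a_j,b_j,c_j\in\Hp{r}{\Ss}$), and $T_k$ is either of type $B^{p',q'}$ with $p'\geq 1$, in which case part~(1) (now proved) together with $\Hp{r-1}{\Ss}\hookrightarrow\Lp{2}{\Ss}$ gives $T_k[\varphi]\in\Hp{1}{\Ss}\hookrightarrow\Hp{r-1}{\Ss}$ with norm $\leq C\|\varphi\|_{\nHp{r-1}}\prod_i\|c_i\|_{\nHp{r}}$, or of type $B^{0,q+1}$, in which case Lemma~\ref{Lem:Bnmq_Hr-1_Hr-1} yields the same $\Hp{r-1}{\Ss}$-bound. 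In either case, multiplication by $d_k'\in\Hp{r-1}{\Ss}$ preserves $\Hp{r-1}{\Ss}$-regularity thanks to the algebra estimate \eqref{eq:Hr_BAlg} (applicable since $r-1>1/2$), and summing produces \eqref{eq:Bnmpq_Hr_Hr-1}.

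The main technical obstacle is the bookkeeping-heavy derivation of the explicit formula for $(B_{n,m}^{p,q})'$ and, in particular, the verification that every $p'=0$ contribution really admits identification as a $B^{0,q+1}$-type operator so that Lemma~\ref{Lem:Bnmq_L2_L2} and Lemma~\ref{Lem:Bnmq_Hr-1_Hr-1} can be brought to bear. Once the formula is in place, the remaining estimates are routine applications of previously established results.
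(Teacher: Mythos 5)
Your strategy mirrors the paper's exactly: differentiate under the integral sign, integrate by parts, recognize the resulting integrand as a finite linear combination of $B_{n',m'}^{p',q'}$-type operators, and invoke Lemma~\ref{Lem:Anmq_Bnmpq_inf}, Lemma~\ref{Lem:Bnmq_L2_L2}, and Lemma~\ref{Lem:Bnmq_Hr-1_Hr-1}. However, two points need repair.

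First, your justification for differentiating under the integral sign is that $K_B$ is bounded; that controls $K_B$ itself but not $\partial_\xi K_B$, and for $p=1$ with merely $\rmC^1$ coefficients $\partial_\xi K_B$ contains terms of the form $(d'(\xi)-d'(\xi-s))/\tss$ which need not be integrable in $s$. The paper avoids this by first taking \emph{all} data $(\bfa,\bfb,\bfc,\varphi)\in\rmC^\infty$, deriving the identity there, and then passing to $(\bfa,\bfb,\bfc)\in\rmC^1$ by density using the local Lipschitz dependence established in Lemma~\ref{Lem:Bnmpq_loc_Lip}. Your density argument only smooths $\varphi$; you need to smooth the coefficients as well and invoke Lemma~\ref{Lem:Bnmpq_loc_Lip} to close the argument.

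Second, the boundary term $\bigl(K_B(\xi,-\pi)-K_B(\xi,\pi)\bigr)\varphi(\xi-\pi)$ that you retain and bound by $C\norm{\varphi}_\infty$ is incompatible with the stated conclusion $\norm{(B_{n,m}^{p,q}[\varphi])'}_2\le C\norm{\varphi}_2$ and with the density step from $\rmC^\infty$ to $\Lp{2}{\Ss}$. In fact, this term vanishes: since $\bfb,\bfc,\varphi$ are $2\pi$-periodic, $\T{\xi,\pi}b_i=\T{\xi,-\pi}b_i$ and $\dg{\xi,\pi}{c_i}=\dg{\xi,-\pi}{c_i}$, while $\tss\to\pm\infty$ makes the denominators of $K_B$ tend to $1$; because $p\le n+q+1$ the limits $K_B(\xi,\pm\pi)$ exist and are equal, so the boundary term is zero (which is why no such term appears in the paper's explicit formula). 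You should either note this directly, or at least bound the product form $g(\xi)\varphi(\xi-\pi)$ by $\norm{g}_\infty\norm{\varphi}_2$ in $\Lp{2}{\Ss}$ rather than by $\norm{\varphi}_\infty$. Finally, a minor bookkeeping remark: differentiation actually \emph{removes} a finite-difference factor (via $\bfb_j$ or $\bfc_j$) rather than adding one, so the emerging $p'=0$ pieces are of type $B^{0,q}$ or $B^{0,q-1}$, not $B^{0,q+1}$; this does not affect which lemmas apply.
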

	\begin{proof}
We first assume that ${(\bfa,\bfb,\bfc,\varphi)\in\rmC^\infty(\Ss)^{m+n+q+1}}.$		
Recalling the  notation~\eqref{eq:KA_KB},
the theorem on the differentiation of parameter  integrals ensures that $B_{n,m}^{p,q}(\bfa\vert \bfb)[\bfc,\varphi]$ is continuously differentiable with 
\begin{equation*}
\big(B_{n,m}^{p,q}(\bfa\vert \bfb)[\bfc,\varphi]\big)'(\xi)=\int_{-\pi}^\pi \partial_\xi K_B(\xi,s)\varphi(\xi-s)-K_B(\xi,s)\partial_s (\varphi(\xi-s)) \dx{s}, \qquad \xi\in\RR.
\end{equation*}
Using integration by parts, we then get
		\begin{equation*} 
		\begin{aligned}
			&(B_{n,m}^{p,q}(\bfa\vert \bfb)[\bfc,\varphi])'\\
			&=\frac{1}{2}\sum_{j=1}^{n} b_j'\big(B_{n-1,m}^{p-1,q}(\bfa\vert \bfb_j)-B_{n+1,m}^{p+1,q}(\bfa\vert \bfb,b_j)\big)[\bfc,\varphi]
			+\frac{1}{2}\sum_{j=1}^{q}c_j'B_{n,m}^{p-1,q-1}(\bfa\vert \bfb)[\bfc_j,\varphi]\\
			&\quad+\sum_{j=1}^{m}a_j'\big(B_{n+3,m+1}^{p+1,q}(\bfa,a_j\vert \bfb,a_j,a_j,a_j)-B_{n+1,m+1}^{p-1,q}(\bfa,a_j\vert \bfb,a_j)\big)[\bfc,\varphi]\\
			&\quad+\sum_{j=1}^{m}\big(B_{n+2,m+1}^{p-1,q}(\bfa,a_j\vert \bfb,a_j,a_j)+B_{n+2,m+1}^{p+1,q}(\bfa,a_j\vert \bfb,a_j,a_j)\big)[\bfc,\varphi]\\
			&\quad+\frac{p-n-q-1}{2}\big(B_{n,m}^{p-1,q}(\bfa\vert \bfb)+B_{n,m}^{p+1,q}(\bfa\vert \bfb)\big)[\bfc,\varphi],
		\end{aligned}
		\end{equation*}
		with the observation that the last term is meaningful only if $1\leq p\leq n+q,$ otherwise it is not present in the formula above.
		The functions $\bfb_j$, $1\leq j\leq n$, and $\bfc_j$, $1\leq j\leq q$, are as defined in \eqref{eq:bfb_j}.
		A standard density argument together with Lemma~\ref{Lem:Bnmpq_loc_Lip} ensures now that~${B_{n,m}^{p,q}(\bfa\vert \bfb)[\bfc,\varphi]\in\nHp{1}(\Ss)}$ for all ${(\bfa,\bfb,\bfc)\in\rmC^1(\Ss)^{m+n+q}}$
		 and ${\varphi\in\Lp{2}{\Ss}}$, the estimate~\eqref{eq:Bnmpq_H1_L2} being a direct consequence of  Lemma~\ref{Lem:Anmq_Bnmpq_inf} and Lemma~\ref{Lem:Bnmq_L2_L2} while the estimate~\eqref{eq:Bnmpq_Hr_Hr-1} follows from Lemma~\ref{Lem:Anmq_Bnmpq_inf}, Lemma~\ref{Lem:Bnmq_Hr-1_Hr-1}, and \eqref{eq:Bnmpq_H1_L2}.
	\end{proof}

	 The next result shows that the operator $B_0$ defined in \eqref{eq:B0_alt}  has similar regularity properties as $B_{n,m}^{p,q}$ with $1\leq p\leq n+q+1$, see \eqref{eq:Bnmpq_Hr_Hr-1}.
	\begin{Lemma}\label{Lem:Map_B0}
		Let $r\in (3/2,2)$. Given $f\in\Hp{r}{\Ss}$, there exists a constant $C>0$ that depends only on~$\|f\|_{\nHp{r}}$ such that for all~${\varphi\in  \Hp{r-1}{\Ss}}$ we have
		\begin{equation}\label{eq:B0_Hs-1_Hs}
			\norm{B_0(f)[\varphi]}_{\nHp{r}}\leq C\norm{\varphi}_{\nHp{r-1}}.
		\end{equation}
	\end{Lemma}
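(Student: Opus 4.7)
My strategy is to reduce the $H^r$-estimate to an $H^{r-1}$-estimate on $(B_0(f)[\varphi])'$ by expressing this derivative explicitly in terms of the operators $B_1(f),\, B_2(f)$ from \eqref{eq:B_by_Bnmpq}, and then to apply the mapping properties in \eqref{eq:B_map_diff} together with the Banach algebra property of $\Hp{r-1}{\Ss}$ (which holds since $r-1>1/2$). The $L^2$-bound $\|B_0(f)[\varphi]\|_2\leq C\|\varphi\|_2$ is immediate from the fact that the kernel $K(\xi,s):=\ln\!\p{(\tss^2+(\Txsf)^2)/((1+\tss^2)(1-(\Txsf)^2))}$ has only a logarithmic singularity at $s=0$ and is uniformly in $\xi$ an $\Lp{p}{\Ss}$-function of $s$ for each $p\in[1,\infty)$.

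For smooth $\varphi$ (the general case follows by density) and after a truncation-and-limit argument, integration by parts in the variable $s$ transfers one derivative from $\varphi$ to the kernel; the boundary contributions at $s=\pm\pi$ cancel by $2\pi$-periodicity of~$f$ and $\varphi$. This yields
\[
(B_0(f)[\varphi])'(\xi)=\frac{1}{2\pi}\,\PV\!\int_{-\pi}^{\pi}(\partial_\xi+\partial_s)K(\xi,s)\,\varphi(\xi-s)\dx{s}.
\]
Writing $t:=\tss$, $T:=\Txsf$ and decomposing $K=\ln(t^2+T^2)-\ln(1+t^2)-\ln(1-T^2)$, a direct calculation using $\partial_s t=(1+t^2)/2$, $\partial_\xi T=(1-T^2)f'(\xi)/2$, and $\partial_s T=(1-T^2)f'(\xi-s)/2$ produces the clean simplification
\[
(\partial_\xi+\partial_s)K(\xi,s)=\frac{t(1-T^2)}{t^2+T^2}+\frac{T(1+t^2)}{t^2+T^2}\bigl(f'(\xi)+f'(\xi-s)\bigr).
\]
Dividing numerator and denominator by $t^2$, the two kernels on the right match exactly those of $B_1(f)=B_{0,1}^{0,0}(f)-B_{2,1}^{2,0}(f)$ and $B_2(f)=B_{1,1}^{0,0}(f)+B_{1,1}^{2,0}(f)$ in \eqref{eq:B_by_Bnmpq}, respectively. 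Splitting the factor $f'(\xi)+f'(\xi-s)$ into a part pulled out of the integral and a part kept inside leads to the identity
\[
(B_0(f)[\varphi])'=B_1(f)[\varphi]+f'\cdot B_2(f)[\varphi]+B_2(f)[f'\varphi].
\]

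By \eqref{eq:B_map_diff} (equivalently, Lemma~\ref{Lem:Bnmq_Hr-1_Hr-1} combined with Lemma~\ref{Lem:Bnmpq_H1_L2}) the operators $B_1(f)$ and $B_2(f)$ belong to $\mcL(\Hp{r-1}{\Ss})$ with operator norm controlled by $\norm{f}_{\nHp{r}}$. Since $\Hp{r-1}{\Ss}$ is a Banach algebra and $f'\in\Hp{r-1}{\Ss}$, both $f'\cdot B_2(f)[\varphi]$ and $B_2(f)[f'\varphi]$ are bounded in $\Hp{r-1}{\Ss}$ by $C(\norm{f}_{\nHp{r}})\norm{\varphi}_{\nHp{r-1}}$. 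Combining this with the $L^2$-bound gives $\norm{B_0(f)[\varphi]}_{\nHp{r}}\leq C\norm{\varphi}_{\nHp{r-1}}$, and density of smooth functions concludes the proof. The main obstacle is purely algebraic: the verification that the explicit expression for $(\partial_\xi+\partial_s)K$ collapses precisely to $B_1(f)$ and $B_2(f)$; everything else is a routine consequence of the appendix estimates and the algebra property of $\Hp{r-1}{\Ss}$.
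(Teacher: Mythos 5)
Your strategy coincides with the paper's: bound $B_0(f)$ in a low-order norm, differentiate under the integral sign and integrate by parts in $s$ to rewrite $(B_0(f)[\varphi])'$ via $B_1(f)$ and $B_2(f)$, invoke \eqref{eq:B_map_diff}, then close by density. However, there is a computational slip in the kernel calculation that leads to an incorrect identity. With $T=\Txsf=\tanh\big((f(\xi)-f(\xi-s))/2\big)$ one has
\[
\partial_\xi T=\tfrac{1}{2}(1-T^2)\big(f'(\xi)-f'(\xi-s)\big),\qquad \partial_s T=\tfrac{1}{2}(1-T^2)\,f'(\xi-s),
\]
whereas you used $\partial_\xi T=\tfrac12(1-T^2)f'(\xi)$, dropping the term $-f'(\xi-s)$. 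The correct sum is therefore $(\partial_\xi+\partial_s)T=\tfrac12(1-T^2)f'(\xi)$, not $\tfrac12(1-T^2)(f'(\xi)+f'(\xi-s))$, and the correct identity is
\[
(B_0(f)[\varphi])'=B_1(f)[\varphi]+f'\,B_2(f)[\varphi],
\]
with no third term $B_2(f)[f'\varphi]$. That spurious term happens to be harmless for the estimate (it is also bounded in $\Hp{r-1}{\Ss}$ by the same lemmas and the algebra property), so the final inequality \eqref{eq:B0_Hs-1_Hs} still follows from your argument; but the stated identity for the derivative is false and should be corrected. Aside from this, the approach matches the paper's (which records the low-order control as $B_0(f)\in\mcL(\Lp{2}{\Ss},\Lp{\infty}{\Ss})$ rather than $\mcL(\Lp{2}{\Ss})$, an inessential variation).
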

	\begin{proof}
	 We first prove that if $f\in \Wpp{1}{\infty}{\Ss}$, then $B_0(f)\in\mcL(\Lp{2}{\Ss}, \Lp{\infty}{\Ss})$.
	Indeed,  using~\eqref{eq:GPi} and the fact that $\ln(\sin(\cdot/2)^2)\in\Lp{2}{\Ss}$, we deduce, in view of the inequality
		\[
		\Big|\ln\big(\sin^2(s/2)+\sinh^2(\dg{\xi, s}{f}/2))\big)\Big|\leq \Big|\ln\big(\sin^2(s/2) \big)\Big|+\ln \big(1+\sinh^2( \pi\|f'\|_\infty)\big),\quad \xi,\,s\in\Ss,
		\]
		 that for~${\varphi\in \Lp{2}{\Ss}}$ we have
		\begin{align*}
			|B_0(f)[\varphi](\xi)| &\leq \bigg(\int_{-\pi}^\pi\big[\big|\ln\!\big(\sin^2(s/2)\big)\big
			|+\ln \big(1+\sinh^2( \pi\|f'\|_\infty)\big) \big]\, |\varphi(\xi-s)|\dx{s} \bigg)  \leq C\norm{\varphi}_{2}.
		\end{align*}

		We now assume that $f\in\Hp{r}{\Ss}$ and $\varphi\in\rmC^\infty(\Ss)$. 
		Using the theorem on the differentiation of parameter integrals and subsequently integration by parts, we find that $B_0(f)[\varphi]$ is continuously differentiable and its derivative is given by
		\begin{align*}
			(B_0(f)[\varphi])'=f'B_2(f)[\varphi]+ B_1(f)[\varphi]\in \Hp{r-1}{\Ss},
		\end{align*}
		cf. Lemma~\ref{Lem:Bnmq_Hr-1_Hr-1} and Lemma~\ref{Lem:Bnmpq_H1_L2}.
		The claim follows now by a standard density argument in view of  Lemma~\ref{Lem:Bnmq_Hr-1_Hr-1} and Lemma~\ref{Lem:Bnmpq_H1_L2}.
	\end{proof}
	
	\subsection{Fr\'{e}chet differentiability}
	 This section is devoted to establishing   the following result.
	\begin{Corollary}\label{C:CCC} Given $r\in(3/2,2)$, the mappings 
	\begin{equation*}
\begin{aligned}
&[f\mapsto B_{n,m}^{0,q}(f)]: \Hp{r}{\Ss}\to \mcL(\Hp{r-1}{\Ss}),\\[1ex]
&[f\mapsto B_0(f)],\,[f\mapsto B_{n,m}^{p,q}(f)]: \Hp{r}{\Ss}\to \mcL(\Hp{r-1}{\Ss}, \Hp{r}{\Ss}),\qquad   1\leq p\leq n+q+1,\\[1ex]
\end{aligned}
\end{equation*}
are smooth.
	\end{Corollary}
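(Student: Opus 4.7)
The plan is to decompose each map as a composition that makes the dependence transparent. Writing
\[
B_{n,m}^{p,q}(f) = \widetilde{B}_{n,m}^{p,q}\circ\Delta,
\]
where $\Delta: \Hp{r}{\Ss}\to \Hp{r}{\Ss}^{m+n+q}$ is the linear (hence smooth) diagonal $f\mapsto(f,\ldots,f)$ and $\widetilde{B}_{n,m}^{p,q}(\bfa,\bfb,\bfc) := B_{n,m}^{p,q}(\bfa\vert\bfb)[\bfc,\cdot]$, it suffices to establish smoothness of $\widetilde{B}_{n,m}^{p,q}$ into the appropriate operator space. Since the integrand in \eqref{eq:Bnmpq} depends linearly on each $b_i$ and each $c_j$, the map $(\bfb,\bfc)\mapsto\widetilde{B}_{n,m}^{p,q}(\bfa,\bfb,\bfc)$ is multilinear, so its smoothness in $(\bfb,\bfc)$ reduces to joint continuity, which is precisely what Lemma~\ref{Lem:Bnmq_Hr-1_Hr-1} (case $p=0$) and Lemma~\ref{Lem:Bnmpq_H1_L2} (case $p\geq1$) provide, using the embedding $\Hp{r}{\Ss}\hookrightarrow\Wpp{1}{\infty}{\Ss}$ granted by $r>3/2$.

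The only truly nonlinear dependence is on $\bfa$, entering through $\prod_{i=1}^m[1+(\T{\xi,s}a_i/\tss)^2]^{-1}$. Differentiating this factor with respect to $a_j$ in a direction $h$ produces, via the chain rule on the smooth function $x\mapsto1/(1+x^2)$ and the identity $\partial_{a_j}\T{\xi,s}a_j=(\dg{\xi,s}h/2)(1-\T{\xi,s}a_j^2)$, a finite linear combination of integrands of the very same $B_{\tilde n,\tilde m}^{p,\tilde q}$-form with $\tilde n\leq n+2$, $\tilde m\leq m+1$, $\tilde q\leq q+1$, still satisfying $p\leq \tilde n+\tilde q+1$. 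Consequently, the Fr\'echet derivative $\partial_\bfa\widetilde{B}_{n,m}^{p,q}$ is itself a finite sum of $B$-type operators, and the mapping properties of Section~\ref{Sec:A2} transfer verbatim; an induction on the order of differentiation then produces all higher derivatives within the same family. Continuity of each derivative with respect to $(\bfa,\bfb,\bfc)$ follows from Lemma~\ref{Lem:Bnmpq_loc_Lip}, and smoothness is obtained by combining continuity with multilinearity of the remaining slots.

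For $B_0$, the plan is analogous: the computation in the proof of Lemma~\ref{Lem:Map_B0} already yields the identity
\[
\big(B_0(f)[\varphi]\big)' = f'B_2(f)[\varphi]+B_1(f)[\varphi],
\]
which expresses the derivative of $B_0(f)[\varphi]$ as a linear combination of $B$-type operators whose smooth dependence on $f$ has just been established. Since the map $\varphi\mapsto B_0(f)[\varphi]$ is linear, smoothness in $f$ into $\mcL(\Hp{r-1}{\Ss},\Hp{r}{\Ss})$ follows once one controls the mean value, which is routine from the explicit log-kernel representation.

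The only genuine work is the bookkeeping in the inductive differentiation step: one must verify that every iterated Fr\'echet derivative of $\widetilde{B}_{n,m}^{p,q}$ can be written as a finite combination of $B$-type operators whose indices remain within the range where Lemmas~\ref{Lem:Anmq_Bnmpq_inf}--\ref{Lem:Bnmpq_H1_L2} and \ref{Lem:Bnmpq_loc_Lip} apply. This is combinatorial rather than analytic and does not obstruct the argument, so the corollary follows.
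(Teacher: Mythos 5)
There is a genuine gap, concentrated in your treatment of the $\bfb$-slot and in the continuity step.

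\textbf{Linearity in $\bfb$ is false.} You assert that the integrand in \eqref{eq:Bnmpq} depends linearly on each $b_i$ as well as each $c_j$, and from this you infer multilinearity in $(\bfb,\bfc)$ so that only the $\bfa$-slot requires actual differentiation. But the $b_i$ enter through $T_{[\xi,s]}b_i=\tanh(\delta_{[\xi,s]}b_i/2)$, which is nonlinear in $b_i$; only the $c_j$-slot is linear, because $\delta_{[\xi,s]}c_j$ is. Consequently $(\bfb,\bfc)\mapsto B_{n,m}^{p,q}(\bfa\vert\bfb)[\bfc,\cdot]$ is not multilinear, and your candidate Fr\'echet derivative is missing the contributions from differentiating the $n$ factors $T_{[\xi,s]}b_i/t_{[s]}$. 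These are precisely the $n\bigl(B_{n-1,m}^{p,q,k+1}-B_{n+1,m}^{p+2,q,k+1}\bigr)$ terms in \eqref{eq:Frechet_Bnmpq}. (They arise from $\partial_\eta\tanh(\eta)=1-\tanh^2(\eta)$; rewriting $\tanh^2=t_{[s]}^2(\tanh/t_{[s]})^2$ produces the index shift $n\to n+1,\,p\to p+2$.) The paper avoids this trap by absorbing the entire $(\bfa,\bfb)$-dependence into a single smooth kernel $\phi(\eta,s)$ of $\eta=\delta_{[\xi,s]}f/2$ and routing the new directions $f_1,\dots,f_k$ exclusively through the linear $q$-slot, so that $B_{n,m}^{p,q,k}(f)[f_1,\dots,f_k]$ is linear in each $f_j$ and only $f\mapsto\phi(\delta_{[\xi,s]}f/2,s)$ must be differentiated. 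Your bound $\tilde n\le n+2$ is also off: the $\bfa$-derivative already produces $\tilde n=n+3$.

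\textbf{The continuity step does not close.} To promote the (corrected) candidate derivative to a genuine Fr\'echet derivative and then to $\rmC^\infty$, you invoke Lemma~\ref{Lem:Bnmpq_loc_Lip} for ``continuity of each derivative with respect to $(\bfa,\bfb,\bfc)$''. That lemma gives local Lipschitz dependence only in the weak operator topologies $\mcL(\Lp{2}{\Ss})$ and $\mcL(\Lp{1}{\Ss},\rmC(\Ss))$, not in the required target spaces $\mcL(\Hp{r-1}{\Ss})$ and $\mcL(\Hp{r-1}{\Ss},\Hp{r}{\Ss})$; it is used in the paper only to justify density arguments in the $L^2$ setting, and cannot substitute for a remainder estimate in the $\Hp{r}$ operator norm. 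The paper instead proves Fr\'echet differentiability directly: by Taylor's formula the remainder is an integral over $\tau\in[0,1]$ of operators $B_{\tilde n,\tilde m}^{\tilde p,\tilde q,k+2}(f_\tau)[\dots,f,f][\cdot]$, which is bounded by $C\norm{f}_{\nHp{r}}^2$ via Lemma~\ref{Lem:Bnmq_Hr-1_Hr-1} and Lemma~\ref{Lem:Bnmpq_H1_L2}; since the derivative is again of the same $B_{n,m}^{p,q,k}$-form, the argument iterates, so that each derivative is itself differentiable and hence continuous, yielding $\rmC^\infty$ without any separate continuity input. (For the same reason, the case $p=0$ needs the short Fubini argument in the proof of Lemma~\ref{Lem:Frechet_Bnmpq} to justify interchanging $\PV$ with the Taylor integral, which your write-up also omits.) Your idea for $B_0$ via the spatial-derivative identity $(B_0(f)[\varphi])'=f'B_2(f)[\varphi]+B_1(f)[\varphi]$ plus control of the mean is a viable alternative to Lemma~\ref{Lem:Frechet_B0}, but it would need to be carried out uniformly in $\varphi$ to deliver smoothness into $\mcL(\Hp{r-1}{\Ss},\Hp{r}{\Ss})$, not just pointwise in $\varphi$.
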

	
 The proof of Corollary~\ref{C:CCC} is presented at the end of this section, as it requires some preparation.
Let us first note that Lemma~\ref{Lem:Bnmq_Hr-1_Hr-1} and Lemma~\ref{Lem:Bnmpq_H1_L2} ensure that the mappings defined above are well-defined.
In order to establish the smoothness property, we further introduce the operators
\begin{equation}\label{eq:Bk_map_diff}
\begin{aligned}
		&B_{n,m}^{0,q,k}: \Hp{r}{\Ss}\to\mcL^{k}_{sym}(\Hp{r}{\Ss},\mcL(\Hp{r-1}{\Ss})),\\[1ex]
		&   B_{n,m}^{p,q,k}: \Hp{r}{\Ss}\to\mcL^{k}_{sym}(\Hp{r}{\Ss},\mcL(\Hp{r-1}{\Ss}, \Hp{r}{\Ss})),\qquad 1\leq p\leq n+q+k+1,
\end{aligned}
	\end{equation}
	by
	\begin{equation*}
		B_{n,m}^{p,q,k}(f)[f_1,\dots,f_k][\cdot]:= B_{n,m}^{p,q+k}(f,\dots,f\vert f,\dots,f)[f,\dots,f,f_1,\dots,f_k,\cdot].
	\end{equation*}
Let us note that $B_{n,m}^{p,q}(f)=B_{n,m}^{p,q,0}(f)$. 
The next lemma is the main step towards proving the  smoothness property for the  operators~$B_{n,m}^{p,q}$.

\begin{Lemma}\label{Lem:Frechet_Bnmpq}
		The mappings \eqref{eq:Bk_map_diff} are Fr\'{e}chet differentiable. Moreover, the Fr\'{e}chet derivative $\partial B_{n,m}^{p,q,k}(f_0)$ is given by 
		\begin{equation}\label{eq:Frechet_Bnmpq}
		\begin{aligned}
			\partial B_{n,m}^{p,q,k}(f_{0})[f][f_1,\dots,f_k]&=n\big(B_{n-1,m}^{p,q,k+1}(f_{0}) - B_{n+1,m}^{p+2,q,k+1}(f_{0})\big)[f_1,\dots,f_k,f]\\
			&\quad +2m\big(B_{n+3,m+1}^{p+2,q,k+1}(f_{0})-B_{n+1,m+1}^{p,q,k+1}(f_0)\big)[f_1,\dots,f_k,f]\\
			&\quad+qB_{n,m}^{p,q-1,k+1}(f_0)[f_1,\dots,f_k,f]
		\end{aligned}
		\end{equation}
		 for $ f_0,\,f,\, f_1,\ldots,f_k\in \Hp{r}{\Ss}$, where terms with negative indices are to be neglected.  
	\end{Lemma}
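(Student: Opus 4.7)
The plan is to perform a kernel-level differentiation of $B_{n,m}^{p,q,k}(f_0+\tau f)$ at $\tau=0$, match the resulting expression term-by-term with the right-hand side of~\eqref{eq:Frechet_Bnmpq}, and then upgrade the pointwise identity to Fr\'echet differentiability in the operator topologies declared in~\eqref{eq:Bk_map_diff} using the locally Lipschitz bounds from Lemma~\ref{Lem:Bnmpq_loc_Lip} together with the boundedness estimates in Lemma~\ref{Lem:Bnmq_Hr-1_Hr-1} and Lemma~\ref{Lem:Bnmpq_H1_L2}. Fix $f_0,f\in\Hp{r}{\Ss}$ and note that, with $f_1,\dots,f_k$ and $\varphi$ held fixed, the integrand of $B_{n,m}^{p,q,k}(f_0+\tau f)[f_1,\dots,f_k][\varphi](\xi)$ depends on $\tau$ only through the $n$ numerator factors $\Tf{\xi,s}{(f_0+\tau f)}/\tss$, the $m$ denominator factors $[1+(\Tf{\xi,s}{(f_0+\tau f)}/\tss)^{2}]^{-1}$, and the $q$ linear $\bfc$-factors $\dg{\xi,s}{(f_0+\tau f)}/(2\tss)$.

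For the numerator slots I would use
\[
\frac{\mathrm{d}}{\mathrm{d}\tau}\Big|_{\tau=0}\frac{\Tf{\xi,s}{(f_0+\tau f)}}{\tss}=\frac{1-(\Tf{\xi,s}{f_0})^{2}}{2\tss}\,\dxsf
\]
together with the algebraic splitting
\[
\frac{1-(\Tf{\xi,s}{f_0})^{2}}{2\tss}\,\dxsf=\frac{\dxsf/2}{\tss}-\Big(\frac{\Tf{\xi,s}{f_0}}{\tss}\Big)^{\!2}\tss^{2}\cdot\frac{\dxsf/2}{\tss}.
\]
The first piece removes one $\bfb$-factor and appends $f$ as a new $\bfc$-factor, producing $B_{n-1,m}^{p,q,k+1}(f_0)$ applied to $[f_1,\dots,f_k,f]$; the second piece adds two $\bfb$-factors together with an extra $\tss^{2}$, producing $-B_{n+1,m}^{p+2,q,k+1}(f_0)$. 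Summing the $n$ identical contributions yields the first line of~\eqref{eq:Frechet_Bnmpq}. An analogous computation for the $m$ denominator factors---starting from $\tfrac{\mathrm{d}}{\mathrm{d}\tau}\big|_{\tau=0}[1+(\Tf{\xi,s}{(f_0+\tau f)}/\tss)^{2}]^{-1}$ and applying the same splitting, with a combinatorial factor of $2$ arising from $\dxsf/\tss=2(\dxsf/2)/\tss$---produces the second line $2m(B_{n+3,m+1}^{p+2,q,k+1}(f_0)-B_{n+1,m+1}^{p,q,k+1}(f_0))$. The $q$ linear $\bfc$-slots differentiate directly to $\dxsf/(2\tss)$ and yield $qB_{n,m}^{p,q-1,k+1}(f_0)$. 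Summation gives exactly~\eqref{eq:Frechet_Bnmpq}.

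To upgrade to Fr\'echet differentiability in the topologies of~\eqref{eq:Bk_map_diff}, let $\mathcal{L}(f_0)[f]$ denote the proposed right-hand side and apply the fundamental theorem of calculus to $\tau\mapsto B_{n,m}^{p,q,k}(f_0+\tau f)[f_1,\dots,f_k][\varphi]$. This gives
\[
B_{n,m}^{p,q,k}(f_0+f)-B_{n,m}^{p,q,k}(f_0)-\mathcal{L}(f_0)[f]=\int_0^1\big(\mathcal{L}(f_0+\tau f)-\mathcal{L}(f_0)\big)[f]\dx{\tau}.
\]
Each summand inside the integral is a difference of $B_{\cdot,\cdot}^{\cdot,\cdot,k+1}$-operators evaluated at base points at distance $\tau\|f\|_{\nHp{r}}$ in $\Hp{r}{\Ss}\hookrightarrow\Wpp{1}{\infty}{\Ss}$. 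Lemma~\ref{Lem:Bnmpq_loc_Lip} controls these differences in $\mcL(\Lp{2}{\Ss})$ for the $p=0$ terms and in $\mcL(\Lp{1}{\Ss},\rmC(\Ss))$ for the $p\geq 1$ terms by a constant multiple of $\tau\|f\|_{\nHp{r}}$, while Lemma~\ref{Lem:Bnmq_Hr-1_Hr-1} and Lemma~\ref{Lem:Bnmpq_H1_L2} give uniform bounds in the strong target norms; interpolation via~\eqref{eq:interpolation} then upgrades the Lipschitz estimates to the required operator topologies, so that the remainder is of order $\|f\|_{\nHp{r}}^{2-\theta}=o(\|f\|_{\nHp{r}})$.

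The main obstacle is the careful combinatorial book-keeping in the kernel differentiation step: tracking how each of the three differentiation operations alters the tuple $(n,m,p,q)$ and keeping the signs and multiplicities straight after the two algebraic splittings (in particular ensuring that the factor~$2$ in front of the $a$-slot contribution appears exactly once). The analytic upgrade is comparatively routine, once one observes that Lipschitz control in a weaker operator norm combined with uniform boundedness in a stronger one yields, by complex interpolation, Lipschitz-like control in the intermediate target norms $\mcL(\Hp{r-1}{\Ss})$ and $\mcL(\Hp{r-1}{\Ss},\Hp{r}{\Ss})$ required by the statement.
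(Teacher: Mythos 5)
Your kernel-level identification of the Gateaux derivative is correct and is essentially the same computation the paper performs (the paper packages the kernel as $\phi(\eta,s)$ and differentiates in $\eta$, which amounts to the same splitting $\tfrac{1-(\Tf{\xi,s}{f_0})^2}{2\tss}\dxsf=\tfrac{\dxsf/2}{\tss}-(\tfrac{\Tf{\xi,s}{f_0}}{\tss})^2\tss^2\cdot\tfrac{\dxsf/2}{\tss}$). The genuine gap is in the analytic upgrade. You reduce the remainder, via the fundamental theorem of calculus, to $\int_0^1(\mathcal{L}(f_0+\tau f)-\mathcal{L}(f_0))[f]\,\mathrm{d}\tau$ and then propose to interpolate a Lipschitz estimate in a weak operator norm ($\mcL(\Lp{2}{\Ss})$ for $p=0$, $\mcL(\Lp{1}{\Ss},\rmC(\Ss))$ for $p\geq 1$) with a uniform bound in the strong target norm. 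But complex interpolation between $\mcL(\Lp{2}{\Ss})$ and $\mcL(\Hp{r-1}{\Ss})$ lands in $\mcL(\Hp{\theta(r-1)}{\Ss})$, which for $\theta<1$ is a strictly weaker topology than the required $\mcL(\Hp{r-1}{\Ss})$, and for $\theta=1$ only reproduces the trivial uniform bound. To reach $\mcL(\Hp{r-1}{\Ss})$ you would need the strong endpoint to be $\mcL(\Hp{s}{\Ss})$ with $s>r-1$, but Lemma~\ref{Lem:Bnmq_Hr-1_Hr-1} would then demand $f_0\in\Hp{s+1}{\Ss}$ with $s+1>r$, which is more regularity than is available. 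For $p\geq 1$ the situation is worse still: the Lipschitz endpoint $\mcL(\Lp{1}{\Ss},\rmC(\Ss))$ does not even sit in the Sobolev interpolation scale~\eqref{eq:interpolation}. So the interpolation step, as stated, does not close.

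The paper sidesteps this by using Taylor's formula with the \emph{second}-order integral remainder rather than the first-order FTC identity. The remainder is then expressed through $\partial_\eta^2\phi$ and, after interchanging the $\tau$- and $s$-integrations (with an explicit symmetrization to justify the principal value when $p=0$, a point your argument also leaves unaddressed), becomes an integral over $\tau$ of operators $B_{\cdot,\cdot}^{\cdot,\cdot,k+2}(f_\tau)[f_1,\dots,f_k,f,f]$ --- that is, with \emph{two} extra $\bfc$-slots filled with $f$. Lemma~\ref{Lem:Bnmq_Hr-1_Hr-1} and Lemma~\ref{Lem:Bnmpq_H1_L2} then give a bound that is manifestly quadratic in $\|f\|_{\nHp{r}}$ directly in the correct operator topologies, with no interpolation needed. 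If you prefer your FTC reduction, you would instead need a \emph{strong-norm} Lipschitz estimate for $g\mapsto\mathcal{L}(g)$, which again is most naturally obtained by rewriting the difference $\mathcal{L}(f_0+\tau f)-\mathcal{L}(f_0)$ as a $B$-operator with one more $\bfc$-slot filled with $\tau f$ --- i.e., by effectively reproducing the second-order Taylor computation anyway.
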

	\begin{proof}
		Defining $\phi:=\phi_{n,m}^{p,q}$ by the formula
		\begin{equation*}
			\phi(\eta,s):=\frac{1}{2\pi} \frac{\p{\frac{\tanh(\eta)}{\ts{s}}}^{n}\p{\frac{\eta}{\ts{s}}}^q}{\sqp{1+\p{\frac{\tanh(\eta)}{\ts{s}}}^2}^{m}}\tss^p,\qquad \text{$\eta\in\RR,\, 0\neq s\in(-\pi,\pi)$,}
		\end{equation*}
we have for $\xi\in\RR$,   $f,\, f_1,\ldots,f_k\in \Hp{r}{\Ss}$,   and~${\varphi\in  \Hp{r-1}{\Ss}}$
		\begin{equation}
			B_{n,m}^{p,q,k}(f)[f_1,\dots,f_k][\varphi](\xi)=\PV\int_{-\pi}^{\pi}\p{\prod_{i=1}^{k}\frac{\dg{\xi,s}{f_i}/2}{\tss}}\phi(\dg{\xi,s}{f}/2,s)\frac{\varphi(\xi-s)}{\tss}\dx{s},
		\end{equation}
		the $\PV$ being needed only when $p=0$.
	Our goal is to prove that 
		\begin{equation}\label{eq:dB_phi}
		\begin{aligned}
			&\partial B_{n,m}^{p,q,k}(f_0)[f][f_1,\dots,f_k][\varphi](\xi)\\
			&=\PV\int_{-\pi}^{\pi}\p{\prod_{i=1}^{k}\frac{\dg{\xi,s}{f_i}/2}{\tss}}(\dg{\xi,s}{f}/2)\partial_\eta\phi(\dg{\xi,s}{f_0}/2,s)\frac{\varphi(\xi-s)}{\tss}\dx{s}
		\end{aligned}		
		\end{equation}
		for $\xi\in\RR$,   $f_0,\,f,\, f_1,\ldots,f_k\in \Hp{r}{\Ss}$,   and~${\varphi\in  \Hp{r-1}{\Ss}}$, 
		as  straightforward  computations show that the formulas~\eqref{eq:Frechet_Bnmpq} and~\eqref{eq:dB_phi} are equivalent. 
		
		Using Taylor's formula, we compute
\begin{equation}\label{eq:B_deriv_int}		
		\begin{aligned}
		&\hspace{-0.25cm}\big(B_{n,m}^{p,q,k}(f_0+f)-B_{n,m}^{p,q,k}(f_0)-\partial B_{n,m}^{p,q,k}(f_0)[f]\big)[f_1,\dots,f_k][\varphi](\xi)\\
		&\hspace{-0.25cm}=\PV\int_{-\pi}^{\pi} \p{\prod_{i=1}^{k}\frac{\dg{\xi,s}{f_i}/2}{\tss}}(\dg{\xi,s}{f}/2)^2\bigg(\int_0^1 (1-\tau)\partial_\eta^2\phi(\delta_{[\xi,s]}f_\tau/2,s)\dx{\tau}\bigg)\frac{\varphi(\xi-s)}{\tss}\dx{s},
		\end{aligned}
		\end{equation}
		where $ f_\tau:= f_0+\tau f$ for $\tau\in[0,1]$, and $\partial_\eta^2\phi=\partial_\eta^2\phi_{n,m}^{p,q}$ is given by
		\begin{equation}\label{eq:D2_phi}
		\begin{aligned}
		\partial_\eta^2\phi_{n,m}^{p,q}&=\frac{1}{\tss^2}\Big\{n(n-1)\phi_{n-2,m}^{p,q}+2nq\phi_{n-1,m}^{p,q-1}+q(q-1)\phi_{n,m}^{p,q-2}-2m(2n+1)\phi_{n,m+1}^{p,q}\\[1ex]
		&\hspace{1.25cm} -2nq\phi_{n+1,m}^{p+2,q-1}-2n^2\phi_{n,m}^{p+2,q}+8m(n+1)\phi_{n+2,m+1}^{p+2,q}+n(n+1)\phi_{n+2,m}^{p+4,q}\\[1ex]
		&\hspace{1.25cm}-2m(2n+3)\phi_{n+4,m+1}^{p+4,q}+4mq\phi_{n+3,m+1}^{p+2,q-1}-4mq\phi_{n+1,m+1}^{p,q-1}\\[1ex]
		&\hspace{1.25cm}+4m(m+1)\phi_{n+6,m+2}^{p+4,q}-8m(m+1)\phi_{n+4,m+2}^{p+2,q}+4m(m+1)\phi_{n+2,m+2}^{p,q}\Big\}
		\end{aligned}
		\end{equation}
		in $\RR\times \big((-\pi,\pi)\setminus\{0\}\big)$ and for all $0\leq p\leq n+q+k+1$.
		Recalling \eqref{eq:deriv}, in all the terms on the right-hand side of \eqref{eq:B_deriv_int} where $\phi_{n,m}^{p,q}$ with $p\geq 1$ 
		appear, the $\PV$ is not needed and we may interchange the order of integration by using Fubini's theorem. 
		
		Assume first that $p\geq 1$.  We then infer  from \eqref{eq:B_deriv_int} and \eqref{eq:D2_phi}, 
		after interchanging the order of integration in the last line of~\eqref{eq:B_deriv_int}, that 
		\begin{equation}\label{eq:DB_by_B}
		\begin{aligned}
		&\big(B_{n,m}^{p,q,k}(f_0+f)-B_{n,m}^{p,q,k}(f_0)-\partial B_{n,m}^{p,q,k}(f_0)[f]\big)[f_1,\dots,f_k][\varphi]\\
		&=\int_0^1  (1-\tau)\Big\{ n(n-1) B_{n-2,m}^{p,q,k+2}+2nq B_{n-1,m}^{p,q-1,k+2}+q(q-1) B_{n,m}^{p,q-2,k+2}\\
		&\hspace{1.25cm}-2m(2n+1) B_{n,m+1}^{p,q,k+2} -2nqB_{n+1,m}^{p+2,q-1,k+2}-2n^2B_{n,m}^{p+2,q,k+2}\\
		&\hspace{1.25cm}+8m(n+1)B_{n+2,m+1}^{p+2,q,k+2}+n(n+1)B_{n+2,m}^{p+4,q,k+2}-2m(2n+3)B_{n+4,m+1}^{p+4,q,k+2}\\[1ex]
		&\hspace{1.25cm}+4mqB_{n+3,m+1}^{p+2,q-1,k+2}-4mqB_{n+1,m+1}^{p,q-1,k+2}+4m(m+1)B_{n+6,m+2}^{p+4,q,k+2}\\[1ex]
		&\hspace{1.25cm} -8m(m+1)B_{n+4,m+2}^{p+2,q,k+2}+4m(m+1)B_{n+2,m+2}^{p,q,k+2}\Big\}(f_\tau)[f_1,\ldots,f_k,f,f][\varphi]\dx{\tau}.
		\end{aligned}
		\end{equation}
		 Moreover,   Lemma~\ref{Lem:Bnmpq_H1_L2}  implies there exists a constant $C$ such that  for all~${\|f\|_{\nHp{r}}\leq 1 }$  we have
		\begin{equation*}
		\begin{aligned}
			&\big\|\big(B_{n,m}^{p,q,k}(f_0+f)-B_{n,m}^{p,q,k}(f_0)-\partial B_{n,m}^{p,q,k}(f_0)[f]\big)[f_1,\dots,f_k]\big\|_{\mcL\p{\Hp{r-1}{\Ss},\Hp{r}{\Ss}}}\\
			&\leq C\norm{f}_{\nHp{r}}^2\prod_{i=1}^{k}\norm{f_i}_{\nHp{r}},
		\end{aligned}
		\end{equation*}
				which proves \eqref{eq:Frechet_Bnmpq} for $p\geq 1$.
		
		Let now $p=0$. In this case,  the formula~\eqref{eq:DB_by_B} is still valid (and defines a function in~$\Hp{r-1}{\Ss}$).
		This formula is obtained again by interchanging the order of integration in~\eqref{eq:B_deriv_int} via \eqref{eq:D2_phi}, but slightly more subtle arguments are needed when considering the 
		terms of \eqref{eq:D2_phi} with $p=0$ as the $\PV$ symbol appears in front of the first integral in \eqref{eq:B_deriv_int}.
		More precisely, letting 
         \[		
		I(\xi,s,\tau):=\p{\prod_{i=1}^{k}\frac{\dg{\xi,s}{f_i}/2}{\tss}}(\dg{\xi,s}{f}/2)^2  (1-\tau)\partial_\eta^2\phi(\delta_{[\xi,s]}f_\tau/2,s)\frac{\varphi(\xi-s)}{\tss}
		\]
		denote the integrand in \eqref{eq:B_deriv_int}, it holds that 
		\begin{align*}
	    &\PV\int_{-\pi}^{\pi} \Big(\int_0^1 I(\xi,s,\tau)\dx{\tau}\Big)\dx{s}=\int_0^\pi \Big(\int_0^1 I(\xi,s,\tau)+I(\xi,-s,\tau)\dx{\tau}\Big)\dx{s}\\
		&=\int_0^1 \Big(\int_0^\pi I(\xi,s,\tau)+I(\xi,-s,\tau)\dx{s}\Big)\dx{\tau}=\int_0^1\Big(\PV\int_{-\pi}^{\pi} I(\xi,s,\tau)\dx{s}\Big)\dx{\tau},
		\end{align*}
	 by Fubini's theorem and in view of the estimate
	\[
\big|I(\xi,s,\tau)+I(\xi,-s,\tau)\big|	\leq \frac{C}{|s|^{5/2-r}},\qquad \xi\in\RR, \, 0\neq s\in(-\pi,\pi),\, \tau\in[0,1]. 
	\]
Applying Lemma~\ref{Lem:Bnmq_Hr-1_Hr-1} and  Lemma~\ref{Lem:Bnmpq_H1_L2}, we  conclude from \eqref{eq:DB_by_B} that there exists a constant~$C>0$ such that  for all~${\|f\|_{\nHp{r}}\leq 1 }$ we have
		\begin{equation*}
		\begin{aligned}
			&\big\|\big(B_{n,m}^{p,q,k}(f_0+f)-B_{n,m}^{p,q,k}(f_0)-\partial B_{n,m}^{p,q,k}(f_0)[f]\big)[f_1,\dots,f_k]\big\|_{\mcL\p{\Hp{r-1}{\Ss}}}\\
			&\leq C\norm{f}_{\nHp{r}}^2\prod_{i=1}^{k}\norm{f_i}_{\nHp{r}},
		\end{aligned}
		\end{equation*}
which proves the claim for $p=0.$
	\end{proof}
	We now show the Fr\'{e}chet differentiability of the operator $B_0$ defined in \eqref{eq:B0_alt}.
	\begin{Lemma}\label{Lem:Frechet_B0}
		Given $r\in(3/2,2)$, the map $B_0:\Hp{r}{\Ss}\to\mcL(\Hp{r-1}{\Ss},\Hp{r}{\Ss})$ is Fr\'{e}chet differentiable  and the Fr\'{e}chet derivative $\partial B_0(f_0)$ is given by 
		\begin{equation}\label{eq:Frechet_B0}
			\partial B_0(f_0)[f]=2B_{1,1}^{1,0,1}(f_0)[f]+2B_{1,1}^{3,0,1}(f_0)[f],\qquad f_0,\,f\in \Hp{r}{\Ss}.
		\end{equation} 
	\end{Lemma}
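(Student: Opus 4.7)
The plan is to follow the direct Taylor-expansion strategy of Lemma~\ref{Lem:Frechet_Bnmpq} applied to the scalar integrand of~\eqref{eq:B0_alt}, controlling the second-order remainder via the mapping properties from Appendix~\ref{Sec:A}. To identify the candidate derivative, I first introduce the scalar function
\[
\psi(\eta):=\ln\p{\frac{\tss^2+\tanh^2(\eta/2)}{(1+\tss^2)(1-\tanh^2(\eta/2))}},
\]
so that the integrand of $B_0(f)[\varphi]$ equals $(2\pi)^{-1}\psi(\dg{\xi,s}{f})\varphi(\xi-s)$. An elementary computation gives $\psi'(\eta)=T(1+\tss^2)/(\tss^2+T^2)$ with $T:=\tanh(\eta/2)$. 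Substituting $\eta=\dg{\xi,s}{f_0}$, dividing both numerator and denominator by~$\tss^2$, and using the splitting $(1+\tss^2)=1+\tss^2$ identifies the formal Gâteaux derivative $(2\pi)^{-1}\int_{-\pi}^\pi \psi'(\dg{\xi,s}{f_0})\dg{\xi,s}{f}\varphi(\xi-s)\dx{s}$ with precisely $2B_{1,1}^{1,0,1}(f_0)[f][\varphi]+2B_{1,1}^{3,0,1}(f_0)[f][\varphi]$, which indeed belongs to $\mcL(\Hp{r-1}{\Ss},\Hp{r}{\Ss})$ by Lemma~\ref{Lem:Bnmpq_H1_L2}.

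To upgrade this to full Fréchet differentiability, I would then invoke Taylor's formula with integral remainder to write
\begin{align*}
&(B_0(f_0+f)-B_0(f_0)-\partial B_0(f_0)[f])[\varphi](\xi)\\
&\qquad=\frac{1}{2\pi}\int_{-\pi}^\pi\int_0^1(1-\tau)\dg{\xi,s}{f}^2\psi''(\dg{\xi,s}{f_\tau})\dx{\tau}\,\varphi(\xi-s)\dx{s},
\end{align*}
where $f_\tau:=f_0+\tau f$, and exchange the order of integration by Fubini. The task then reduces to expressing, for each fixed $\tau\in[0,1]$, the $s$-integrand as a linear combination of kernels of operators $B_{n,m}^{p,0,2}(f_\tau)[f,f]$ to which Lemma~\ref{Lem:Bnmpq_H1_L2} applies.

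The main obstacle is precisely this algebraic reduction, since
\[
\psi''(\eta)=\frac{(1-T^2)(1+\tss^2)(\tss^2-T^2)}{2(\tss^2+T^2)^2}
\]
does not manifestly match the structure of a $B$-operator kernel. The key identity is
\[
\frac{\tss^2-T^2}{(\tss^2+T^2)^2}=\frac{1}{\tss^2}\p{\frac{1}{1+(T/\tss)^2}-\frac{2(T/\tss)^2}{(1+(T/\tss)^2)^2}},
\]
which, combined with $(1-T^2)=1-\tss^2(T/\tss)^2$, expands $\dg{\xi,s}{f}^2\psi''(\dg{\xi,s}{f_\tau})$ into eight terms each of the form $(\dg{\xi,s}{f}/(2\tss))^2\cdot\tss^{2j}(T/\tss)^{2i}/(1+(T/\tss)^2)^m$ with $i\in\{0,1,2\}$, $j\in\{0,1,2\}$, and $m\in\{1,2\}$. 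Each such term matches the kernel of a $B_{2i,m}^{p,0,2}(f_\tau)[f,f]$-operator with $p=2j+1\in\{1,3,5\}$. The decisive point is that the cancellation provided by the factor~$(\tss^2-T^2)$ forces~$p\geq 1$ in every term, which is exactly the hypothesis required for Lemma~\ref{Lem:Bnmpq_H1_L2} to deliver estimates into the target space~$\Hp{r}{\Ss}$; had a term with $p=0$ appeared, only the weaker bound into~$\Hp{r-1}{\Ss}$ from Lemma~\ref{Lem:Bnmq_Hr-1_Hr-1} would be available and the argument would not close. With the decomposition in hand, estimate~\eqref{eq:Bnmpq_Hr_Hr-1} yields $\|B_{2i,m}^{p,0,2}(f_\tau)[f,f][\cdot]\|_{\mcL(\Hp{r-1}{\Ss},\Hp{r}{\Ss})}\leq C\|f\|_{\nHp{r}}^2$ uniformly in $\tau\in[0,1]$ and in $f$ with $\|f\|_{\nHp{r}}\leq 1$, and integration over $\tau$ closes the argument.
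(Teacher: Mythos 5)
Your proof is correct and follows essentially the same route as the paper: Taylor's formula with integral remainder applied to the logarithmic kernel, followed by a Fubini interchange and a decomposition of the second-derivative kernel into $B_{n,m}^{p,0,2}$-pieces with $p\geq 1$, which Lemma~\ref{Lem:Bnmpq_H1_L2} then controls in $\mcL(\Hp{r-1}{\Ss},\Hp{r}{\Ss})$. Your eight-term decomposition with $m\in\{1,2\}$ collapses to the paper's seven-term all-$m=2$ expression via the elementary identity $B_{n,1}^{p,q,k}=B_{n,2}^{p,q,k}+B_{n+2,2}^{p,q,k}$, so the two are algebraically equivalent, and your remark that the cancellation $\ts{s}^2-T^2$ is precisely what prevents a $p=0$ term from appearing correctly identifies why the estimate into $\Hp{r}{\Ss}$ (rather than merely $\Hp{r-1}{\Ss}$) goes through.
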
	
	\begin{proof}
		We apply the same strategy as in the proof of Lemma~\ref{Lem:Frechet_Bnmpq}. Defining $\phi$ by the formula
		\begin{equation*}
			\phi(\eta,s):=\frac{1}{2\pi}\ln\!\p{\frac{\tss^2+\tanh^2(\eta)}{(1+\tss^2)(1-\tanh^2(\eta))}},\qquad 0\neq \eta\in\RR,\quad  s\in(-\pi,\pi),
		\end{equation*}
		we have
		\begin{equation}\label{eq:phi_deriv}
			\partial_\eta\phi(\eta,s)=\frac{1}{\pi}\frac{(1+\tss^2)\tanh(\eta)}{\tss^2+\tanh^2(\eta)},\qquad \partial_\eta^2\phi(\eta,s)
			=\frac{1}{\pi}\frac{(1+\tss^2)(1-\tanh^2(\eta))(\tss^2-\tanh^2(\eta))}{(\tss^2+\tanh^2(\eta))^2}.
		\end{equation}
		We prove that
		\begin{equation}\label{eq:DB0_by_phi}
			\partial B_0(f_0)[f][\varphi](\xi)=\int_{-\pi}^{\pi}(\dxsf /2)\partial_\eta\phi(\dg{\xi,s}{f_0 /2},s)\varphi(\xi-s)\dx{s},
		\end{equation}	
	since easy calculations show that \eqref{eq:Frechet_B0} and \eqref{eq:DB0_by_phi} coincide. Using Taylor's formula, Fubini's theorem, \eqref{eq:phi_deriv}, and \eqref{eq:DB0_by_phi}, we compute for $\xi\in\RR$, $f_0,\,f\in \Hp{r}{\Ss}$, 
	and $\varphi\in \Hp{r-1}{\Ss}$ that
	\begin{equation*}
	\begin{aligned}
		&\big(B_0(f_0+f)-B_0(f_0)-\partial B_0(f_0)[f]\big)[\varphi](\xi)\\
		&\quad= \int_{-\pi}^\pi(\dxsf/2)^2\bigg(\int_0^1 (1-\tau)\partial_\eta^2\phi (\dg{\xi,s}{f_\tau/2},s)\dx{\tau}\bigg) \varphi(\xi-s)\dx{s}\\
		&\quad=\int_0^1 (1-\tau)\bigg(\int_{-\pi}^\pi(\dxsf/2)^2\partial_\eta^2 \phi(\dg{\xi,s}{f_\tau/2},s)\varphi(\xi-s)\dx{s}\bigg)\dx{\tau}\\
		&\quad=2\int_0^1 (1-\tau)\Big\{B_{0,2}^{1,0,2}+B_{0,2}^{3,0,2}-B_{2,2}^{1,0,2}-2B_{2,2}^{3,0,2}\\
		&\hspace*{8em}-B_{2,2}^{5,0,2}+B_{4,2}^{3,0,2}+B_{4,2}^{5,0,2}\Big\}(f_\tau)[f,f][\varphi]\dx{\tau},
		\end{aligned}
	\end{equation*}
	where $f_\tau=f_0+\tau f$. 
	Using \eqref{eq:Bnmpq_Hr_Hr-1}, we thus find a constant $C>0$ such that for all $f\in\Hp{r}{\Ss}$ with $\norm{f}_{\nHp{r}}\leq 1$ we have
	\begin{equation*}
		\norm{B_0(f_0+f)-B_0(f_0)-\partial B_0(f_0)[f]}_{\mcL(\Hp{r-1}{\Ss},\Hp{r}{\Ss})}\leq C\norm{f}_{\nHp{r}}^2,
	\end{equation*}
	which proves the claim.
	\end{proof}	
	
		 We are now in a position to establish Corollary~\ref{C:CCC}.
	\begin{proof}[Proof of Corollary~\ref{C:CCC}]
	Recalling that $B_{n,m}^{p,q}(f)=B_{n,m}^{p,q,0}(f)$ for $f\in\Hp{r}{\Ss}$, the assertion is a direct consequence of Lemma~\ref{Lem:Frechet_Bnmpq} and Lemma~\ref{Lem:Frechet_B0}.
	\end{proof}

\section{Localization of the singular integral operators $C_{n,m}$}\label{Sec:B}
	
	In this section we show that the singular integral operators 
	 $C_{n,m}^0$  defined in \eqref{eq:Cnm0} can be locally approximated by Fourier multipliers, see Lemma~\ref{Lem:Cnm_approx_a} for the precise statement.
	As a starting point we infer from~\eqref{eq:Cnm} the following algebraic relations
		\begin{equation}\label{eq:Cnm_dif_a}
		\begin{aligned}
		&\big(C_{n,m}(\tilde\bfa) -C_{n,m}(\bfa)\big)[\bfb,\varphi]\\
&=\sum_{i=1}^{m}C_{n+2,m+1}(a_1,\dots,a_i,\tilde{a}_i,\dots,\tilde{a}_m)[\bfb,a_i+\tilde{a}_i,a_i-\tilde{a}_i,\varphi],\qquad  n\in\NN_0,\, m\in\NN,
		\end{aligned}
	\end{equation}
and 
		\begin{equation}\label{eq:dC-Cd}
		\begin{aligned}
		&d C_{n,m}(\bfa)[\bfb,\varphi] -C_{n,m}(\bfa)[\bfb,d\varphi]\\
	&=b_1C_{n,m}(\bfa)[b_2,\ldots,b_n,d,\varphi]-C_{n,m}(\bfa)[b_2,\ldots,b_n,d,b_1\varphi],\qquad n\in\NN,\,m\in\NN_0,
		\end{aligned}
	\end{equation}
	 which hold for  all   $\bfa,\,\tilde{\bfa}\in \Wpp{1}{\infty}{\Ss}^{m}$, $\bfb\in \Wpp{1}{\infty}{\Ss}^{n}, $ $d\in\Wpp{1}{\infty}{\Ss},$  
	 and~$\varphi\in\Lp{2}{\Ss}$.
	
	The following commutator property, see \cite[Lemma 12]{Abels.2022} for a similar result in a non-periodic setting, is an important tool in the analysis that follows.
	\begin{Lemma}\label{Lem:Cnm_com}
		Given $n,\,m\in\NN_0$  and $a,\,f\in\rmC^1(\Ss)$, there exists a constant $C>0$ that depends only on $n,\,m,\,r,$ and $\norm{(a,f)}_{\rmC^1}$  
		  such that for all $\varphi\in\Lp{2}{\Ss}$ we have
		\begin{equation}\label{eq:Cnm_com}
			\norm{a C_{n,m}^{0}(f)[\varphi]-C_{n,m}^{0}(f)[a\varphi]}_{\nHp{1}}\leq C\norm{\varphi}_{2}.
		\end{equation}
	\end{Lemma}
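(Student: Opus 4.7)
My plan is to show that the commutator has a non-singular integral kernel, which immediately yields the $\Lp{2}{\Ss}$-to-$\Lp{2}{\Ss}$ bound, and then to gain one derivative via an integration by parts in the $s$-variable that decomposes the result into a finite sum of $C_{n',m'}$-type operators, already $\Lp{2}{\Ss}$-bounded by Lemma~\ref{Lem:Cnm_est}~(i).

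First, using $a(\xi)-a(\xi-s) = s\int_0^1 a'(\xi-\tau s)\,\dx{\tau}$ cancels the $1/s$ in the kernel of $C_{n,m}^0(f)$, so that
\begin{equation*}
[a, C_{n,m}^0(f)][\varphi](\xi) = \frac{1}{\pi}\int_{-\pi}^\pi \Phi(\xi,s)\,\varphi(\xi-s)\,\dx{s},\qquad \Phi(\xi,s):=F\Big(\frac{\dg{\xi,s}{f}}{s}\Big)\frac{\dg{\xi,s}{a}}{s},
\end{equation*}
with $F(x):=x^n/(1+x^2)^m$ and $\|\Phi\|_\infty\leq C(n,m,\|(a,f)\|_{\rmC^1})$; the $\Lp{2}{\Ss}$-bound is then immediate from Cauchy--Schwarz.

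Assuming temporarily $a,f,\varphi\in\rmC^\infty(\Ss)$, I would differentiate under the integral in $\xi$, use $\partial_\xi\varphi(\xi-s) = -\partial_s\varphi(\xi-s)$, and integrate by parts in $s$ over $[-\pi,\pi]$ to obtain
\begin{equation*}
\big([a,C_{n,m}^0(f)][\varphi]\big)'(\xi) = \frac{1}{\pi}\int_{-\pi}^\pi (\partial_\xi+\partial_s)\Phi(\xi,s)\,\varphi(\xi-s)\,\dx{s} - \frac{1}{\pi}\big[\Phi(\xi,\pi)-\Phi(\xi,-\pi)\big]\varphi(\xi-\pi),
\end{equation*}
where the boundary contribution is $\Lp{2}{\Ss}$-controlled by $\|\Phi(\cdot,\pm\pi)\|_\infty\|\varphi\|_2$ after invoking the periodicity $\varphi(\xi+\pi)=\varphi(\xi-\pi)$. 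The algebraic heart is the elementary identity $(\partial_\xi+\partial_s)(\dg{\xi,s}{h}/s) = h'(\xi)/s - \dg{\xi,s}{h}/s^2$ for any $h\in\rmC^1(\Ss)$, applied to $h\in\{f,a\}$. Combined with the chain rule on $\Phi$, one sees that $(\partial_\xi+\partial_s)\Phi$ is a finite linear combination of terms of the shape $h'(\xi)\Psi(\xi,s)/s$ or $\Psi(\xi,s)/s$, where $\Psi$ is a bounded expression built from $(\dg{\xi,s}{f}/s)^p$, $\dg{\xi,s}{a}/s$, and $[1+(\dg{\xi,s}{f}/s)^2]^{-q}$. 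Each associated principal-value integral $\PV\int \Psi(\xi,s)\varphi(\xi-s)/s\,\dx{s}$ is exactly a $C_{n',m'}$-operator with coefficient functions among $\{f,a\}$, hence $\Lp{2}{\Ss}$-bounded by Lemma~\ref{Lem:Cnm_est}~(i) with constant controlled by $\|(a,f)\|_{\rmC^1}$; the outer multiplier $h'\in\Lp{\infty}{\Ss}$ preserves $\Lp{2}{\Ss}$.

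For the general case $a,f\in\rmC^1(\Ss)$, $\varphi\in\Lp{2}{\Ss}$, I would approximate by standard mollifications $a_\ve,f_\ve\in\rmC^\infty(\Ss)$ with $\|(a_\ve,f_\ve)\|_{\rmC^1}$ uniformly bounded, together with $\varphi_\ve\to\varphi$ in $\Lp{2}{\Ss}$. The uniform $\nHp{1}$-estimate, combined with the $\Lp{2}{\Ss}$-convergence of the commutator coming from the $\rmC^{1-}$-continuity of $C_{n,m}$ in the $\nWpp{1}{\infty}$-arguments given by Lemma~\ref{Lem:Cnm_est}~(i), produces a weakly convergent sequence in $\Hp{1}{\Ss}$ whose limit agrees with the $\Lp{2}{\Ss}$-limit; lower semicontinuity of the norm preserves the bound. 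The main obstacle is the combinatorial bookkeeping in the preceding paragraph, namely verifying that every term generated by the chain rule on $\Phi$ fits the $C_{n',m'}$-template with coefficient functions drawn from $\{f,a\}$ so that Lemma~\ref{Lem:Cnm_est}~(i) applies uniformly. This is a routine check once the algebraic identity above is in hand, and no additional analytic input is required.
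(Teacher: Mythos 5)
Your proposal is correct. The paper itself gives no argument here: its ``proof'' is a one-line reference to the analogous nonperiodic result \cite[Lemma~12]{Abels.2022}, which is why the comparison is necessarily indirect, but your commutator-kernel computation, integration by parts in $s$, the identity $(\partial_\xi+\partial_s)(\dg{\xi,s}{h}/s)=h'(\xi)/s-\dg{\xi,s}{h}/s^2$, the chain-rule decomposition of $(\partial_\xi+\partial_s)\Phi$ into a finite sum of bounded multipliers times $C_{n',m'}$-kernels with coefficients in $\{f,a\}$, the invocation of Lemma~\ref{Lem:Cnm_est}~(i), and the mollification step are exactly the standard route these commutator lemmas take, and the periodic boundary term (which vanishes for odd $n$ and equals $2\Phi(\cdot,\pi)\varphi(\cdot-\pi)$ for even $n$) is handled correctly. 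One small point worth making explicit: after the chain-rule split, the numerator of $(\partial_\xi+\partial_s)\Phi$ vanishes as $s\to0$, so the absolutely convergent integral equals the finite sum of principal-value integrals into which you decompose it; each $\PV$ exists individually by Lemma~\ref{Lem:Cnm_est}~(i), so the bookkeeping you flag as ``routine'' indeed goes through.
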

	\begin{proof}
 The proof is similar to that of \cite[Lemma 12]{Abels.2022}, and therefore we omit it.
	\end{proof}
	
	Let us now recall the definition of an $\ve$-partition of unity from Section~\ref{Sec:34}. 
	The central result in this section is the following lemma.
	\begin{Lemma}\label{Lem:Cnm_approx_a}
		Let $n,\,m\in\NN_0$, $3/2<r'<r<2$, $f\in\Hp{r}{\Ss}$, $a,\,b\in\Hp{r-1}{\Ss}$, and~${\eta>0}$ be given. 
		 Then, for any sufficiently small $\ve\in(0,1)$, there exists a constant~$K>0$ that depends on~$\ve,\,n,\,m,\,\|f\|_{\nHp{r}},$ and $\|(a,b)\|_{\nHp{r-1}}$ such that for all $1\leq j\leq N$ and $\varphi\in\Hp{r-1}{\Ss}$ we have 
		\begin{equation}\label{eq:Cnm_approx_a}
			\bigg\|\pi_j^\ve a C_{n,m}^0 (f)[b\varphi]-\frac{a(x_j^\ve)b(x_j^\ve)(f'(x_j^\ve))^n}{\big[1+(f'(x_j^\ve))^2\big]^m} H[\pi_j^\ve \varphi]\bigg\|_{\nHp{r-1}}
			\leq \eta \norm{\pi_j^\ve \varphi}_{\nHp{r-1}}+K\norm{\varphi}_{\nHp{r'-1}}.
		\end{equation}
	\end{Lemma}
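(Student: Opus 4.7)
The plan is to freeze the coefficient functions $a$, $b$, and the function $f$ at the basepoint $x_j^\ve$ on the interval $J_j^\ve$, thereby reducing $C_{n,m}^0(f)$ to a constant multiple of a translation-invariant singular integral operator that coincides with the periodic Hilbert transform $H$ modulo smoothing. The smallness of all the coefficient-replacement errors is driven by the H\"older moduli of continuity of $a$, $b$, and $f'$ (via $\Hp{r-1}{\Ss}\hookrightarrow\rmC^{r-3/2}(\Ss)$) on an interval of length $O(\ve)$, each of size $O(\ve^{r-3/2})$, which can be made arbitrarily small by choosing $\ve$ small enough. Throughout, the algebra property \eqref{eq:Hr_BAlg}, the mapping properties from Appendix~\ref{Sec:A}, and the commutator estimate of Lemma~\ref{Lem:Cnm_com} are used to absorb lower-order terms into the $K\|\varphi\|_{\nHp{r'-1}}$ bound.

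I first freeze the outer multiplier $a$ and the inner multiplier $b$. Writing $a=a(x_j^\ve)+(a-a(x_j^\ve))$ and using \eqref{eq:Hr_BAlg} combined with the $\Hp{r-1}{\Ss}\to\Hp{r-1}{\Ss}$ boundedness of $C_{n,m}^0(f)$ from Lemma~\ref{Lem:Cnm_est}~(iii), the remainder containing $(a-a(x_j^\ve))$ is controlled by $\ve^{r-3/2}\|\pi_j^\ve\varphi\|_{\nHp{r-1}}+K\|\varphi\|_{\nHp{r'-1}}$, once one has used Lemma~\ref{Lem:Cnm_com} to thread $\pi_j^\ve$ through $C_{n,m}^0(f)$ so that the relevant argument of the operator really is $\pi_j^\ve\varphi$ and the global norm $\|\varphi\|_{\nHp{r-1}}$ is replaced by the localized $\|\pi_j^\ve\varphi\|_{\nHp{r-1}}$ plus an error of $K\|\varphi\|_{\nHp{r'-1}}$ type. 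The same strategy handles the replacement of $b$ by $b(x_j^\ve)$: using the commutator identity $b\,C_{n,m}^0(f)[\varphi]-C_{n,m}^0(f)[b\varphi]=[b,C_{n,m}^0(f)][\varphi]$ after approximating $b\in\Hp{r-1}{\Ss}$ by smooth $b_\delta$ (for which Lemma~\ref{Lem:Cnm_com} yields a smoothing bound) and treating $b-b_\delta$ through \eqref{eq:Hr_BAlg}, we obtain an error of the same type.

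Next, I freeze $f$ inside the kernel by introducing the affine function $\ell(\xi):=f(x_j^\ve)+f'(x_j^\ve)(\xi-x_j^\ve)$. The algebraic identity \eqref{eq:Cnm_dif_a} expresses $C_{n,m}^0(f)-C_{n,m}^0(\ell)$ as a sum of $C_{n+2,m+1}$-type operators with one argument equal to $f-\ell$; the Lipschitz-type estimates of Lemma~\ref{Lem:Cnm_est}~(i) and (iii) together with the bound $\|(f-\ell)'\|_{\Lp{\infty}{J_j^\ve}}\leq C\ve^{r-3/2}$ then yield another contribution of the required form. For the affine $\ell$ the integrand drastically simplifies: $\delta_{[\xi,s]}\ell/s\equiv f'(x_j^\ve)$, so
\begin{equation*}
C_{n,m}^0(\ell)[\varphi](\xi)=\frac{(f'(x_j^\ve))^n}{[1+(f'(x_j^\ve))^2]^m}\cdot\frac{1}{\pi}\PV\!\int_{-\pi}^\pi\frac{\varphi(\xi-s)}{s}\dx{s},
\end{equation*}
and the latter $\PV$-integral differs from $H[\varphi]$ by the convolution against the kernel $\tfrac{1}{2\pi}\big(2/s-1/\tan(s/2)\big)$, which is smooth on $(-\pi,\pi)$ (the singularity at $s=0$ being removable) and so defines a smoothing operator contributing to $K\|\varphi\|_{\nHp{r'-1}}$. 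The outer factor $\pi_j^\ve$ is then exchanged with the argument of $H$ via $\pi_j^\ve H[\varphi]=H[\pi_j^\ve\varphi]+[\pi_j^\ve,H]\varphi$, noting that $[\pi_j^\ve,H]$ has a $\rmC^\infty$ kernel $[\pi_j^\ve(\xi)-\pi_j^\ve(\xi-s)]/\tan(s/2)$ (the singularity at $s=0$ again being removable, using $\pi_j^\ve\in\rmC^\infty(\Ss)$) and is therefore smoothing.

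The main obstacle is the bookkeeping that ensures each error is bounded by $\eta\|\pi_j^\ve\varphi\|_{\nHp{r-1}}$ rather than the globally larger $\eta\|\varphi\|_{\nHp{r-1}}$: this is what forces us to bring $\pi_j^\ve$ inside $C_{n,m}^0(f)$ at the outset through Lemma~\ref{Lem:Cnm_com}, and to track the resulting commutator errors as lower-order contributions controlled by $\|\varphi\|_{\nHp{r'-1}}$ throughout the three freezing steps.
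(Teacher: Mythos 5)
Your outline for freezing $a$ and $b$ (commuting $\pi_j^\ve$ past $C_{n,m}^0(f)$ via Lemma~\ref{Lem:Cnm_com}, then exploiting H\"older smallness of $b-b(x_j^\ve)$ on $J_j^\ve$ through \eqref{eq:Hr_BAlg}) matches the paper's treatment of the terms $T_1$--$T_3$ and is fine.

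The gap is in the $f$-freezing step. You propose to bound $C_{n,m}^0(f)-C_{n,m}^0(\ell)$ via \eqref{eq:Cnm_dif_a} together with ``the Lipschitz-type estimates of Lemma~\ref{Lem:Cnm_est}~(i) and~(iii) and $\|(f-\ell)'\|_{\Lp{\infty}{J_j^\ve}}\le C\ve^{r-3/2}$.'' But Lemma~\ref{Lem:Cnm_est}~(iii) controls $\|C_{n+2,m+1}(\cdot)[\dots,f-\ell,\varphi]\|_{\nHp{r-1}}$ in terms of the \emph{global} $\Hp{r-1}$-norm $\|(f-\ell)'\|_{\nHp{r-1}}=\|f'-f'(x_j^\ve)\|_{\nHp{r-1}}$, which is comparable to $\|f'\|_{\nHp{r-1}}$ and does \emph{not} shrink with $\ve$. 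The local $L^\infty$ smallness on $J_j^\ve$ cannot be upgraded to a small $\Hp{r-1}$-operator bound by this route. Even if you replace $f$ by the genuinely localized Lipschitz function $F_j$ (agreeing with $f$ on $J_j^\ve$, affine outside, as in the paper), you gain $\|(F_j-\ell)'\|_\infty\lesssim\ve^{r-3/2}$ but then $\|(F_j-\ell)'\|_{\nHp{r-1}}=\infty$ because $F_j'$ has jump discontinuities and $r-1>1/2$; Lemma~\ref{Lem:Cnm_est}~(iii) is simply not applicable. The paper circumvents this by estimating the $\nWp{r-1}$ Gagliardo seminorm of $T_4$ directly through the translation differences $\tau_y S_k-S_k$, splitting into $|y|>\ve$ (use the crude $L^2$ bound of Lemma~\ref{Lem:Cnm_est}~(i)) and $|y|\le\ve$. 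The latter case is the crux: one needs Lemma~\ref{Lem:chiC}, a specialized statement that $T_j^\ve(f):=\chi_j^\ve C_{n+1,m}(f,\dots,f)[f,\dots,f,f-f'(x_j^\ve)\id_\RR,\cdot]$ has small $\Lp{2}{\Ss}$ operator norm \emph{precisely when applied to functions of the form} $\tau_y(\pi_j^\ve\varphi)-\pi_j^\ve\varphi$ with $|y|\le\ve$, and the proof of that lemma uses the support argument (both $\xi$ and $\xi-s$ lie in $J_j^\ve$ on the relevant set) to justify replacing $f$ by $F_j$, invoking only the $L^2$-bound of Lemma~\ref{Lem:Cnm_est}~(i) with the small $L^\infty$ constant. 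Your plan omits this mechanism, so the error from freezing $f$ ends up $O(1)$ rather than $O(\eta)$.
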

	
	The proof of Lemma~\ref{Lem:Cnm_approx_a} relies  heavily on the result provided by the next lemma.
	\begin{Lemma}\label{Lem:chiC}
		Given $n,\,m\in\NN_0$, $3/2<r<2$, $\eta\in(0,\infty)$, and $f\in\Hp{r}{\Ss}$, for sufficiently  small $\ve\in(0,1)$ and all $1\leq j\leq N,$ $|y|\leq\ve$, and~${\varphi\in\Lp{2}{\Ss}}$ we have
		\begin{equation}\label{eq:chiC}
			\big\|T_j^\ve(f)[\tau_y (\pi_j^\ve \varphi)-\pi_j^\ve \varphi]\big\|_2 \leq \eta \norm{\tau_y (\pi_j^\ve \varphi)-\pi_j^\ve \varphi}_{2},
		\end{equation}
		where $T_j^\ve(f):=\chi_j^\ve C_{n+1,m}(f,\dots,f)[f,\dots,f,f-f'(x_j^\ve)\id_\RR,\cdot] $ and $\tau_y$ is defined in \eqref{eq:tauy}.
	\end{Lemma}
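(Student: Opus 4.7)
The strategy is to localize the kernel by exploiting two facts. First, $\psi := \tau_y(\pi_j^\ve\varphi) - \pi_j^\ve\varphi$ is supported in $J_j^\ve + 2\pi\ZZ$ whenever $|y| \leq \ve$, since $\supp(\pi_j^\ve\varphi) \subset I_j^\ve + 2\pi\ZZ$ and a translation of size at most $\ve$ enlarges the support by at most $\ve$ on either side. Second, the adjusted function $g := f - f'(x_j^\ve)\id_\RR$ has derivative $g'(\xi) = f'(\xi) - f'(x_j^\ve)$; by the Sobolev embedding $\Hp{r-1}{\Ss} \hookrightarrow \rmC^{r-3/2}(\Ss)$ we have
\begin{equation*}
|g'(\xi)| \leq C\|f\|_{\nHp{r}}\,|\xi - x_j^\ve|^{r-3/2},\qquad \xi\in\Ss,
\end{equation*}
so $\sup_{J_j^\ve}|g'| \leq C\|f\|_{\nHp{r}}\ve^{r-3/2}$, which vanishes as $\ve\to 0$. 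This smallness is the engine of the estimate; the issue to overcome is that Lemma \ref{Lem:Cnm_est}~(i) bounds the operator norm of $C_{n+1,m}$ in terms of $\|b_i'\|_\infty$ computed globally, which for $g$ is only $O(1)$, not $O(\omega(\ve))$.

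To circumvent this, I would introduce a cutoff $\phi \in \rmC^\infty(\RR,[0,1])$ with $\phi \equiv 1$ on $J_j^\ve$, $\supp\phi \subset [x_j^\ve - 3\ve, x_j^\ve + 3\ve]$, and $\|\phi'\|_\infty \leq C/\ve$, and set $\tilde g := \phi\cdot(g - g(x_j^\ve))$. Then $\tilde g = g - g(x_j^\ve)$ on $J_j^\ve$, so the increments satisfy $\dg{\xi,s}{\tilde g} = \dg{\xi,s}{g}$ whenever $\xi,\xi - s \in J_j^\ve$. Because $\chi_j^\ve(\xi) = 0$ outside $J_j^\ve$ and $\psi(\xi - s) = 0$ when $\xi - s \notin J_j^\ve$, the integrand defining $\chi_j^\ve \cdot C_{n+1,m}(f,\ldots,f)[f,\ldots,f,g,\psi](\xi)$ vanishes outside the set where $g$ and $\tilde g$ yield identical kernels, so
\begin{equation*}
T_j^\ve(f)[\psi] = \chi_j^\ve\cdot C_{n+1,m}(f,\ldots,f)[f,\ldots,f,\tilde g,\psi].
\end{equation*}
A product-rule computation combined with $|g(\xi) - g(x_j^\ve)| \leq 3\ve\sup_{\supp\phi}|g'|$ then yields
\begin{equation*}
\|\tilde g'\|_\infty \leq C\|f\|_{\nHp{r}}\ve^{r-3/2} =: \omega(\ve).
\end{equation*}

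The conclusion is then immediate from Lemma \ref{Lem:Cnm_est}~(i). I would choose $\theta \in \RR$ with $J_j^\ve \subset (\theta - \pi, \theta + \pi)$, possible for $\ve < \pi/2$ since $|J_j^\ve| = 4\ve$, and use that $T_j^\ve(f)[\psi]$ is supported in $J_j^\ve + 2\pi\ZZ$ to estimate
\begin{equation*}
\|T_j^\ve(f)[\psi]\|_2 \leq \|C_{n+1,m}(f,\ldots,f)[f,\ldots,f,\tilde g,\psi]\|_{\Lp{2}{(\theta-\pi,\theta+\pi)}} \leq C\|f'\|_\infty^n\|\tilde g'\|_\infty\|\psi\|_2 \leq C'\omega(\ve)\|\psi\|_2,
\end{equation*}
with $C'$ depending only on $n,m,$ and $\|f\|_{\nHp{r}}$. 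Choosing $\ve$ so small that $C'\omega(\ve) \leq \eta$ completes the proof. The main technical point is the replacement of $g$ by $\tilde g$: although $\tilde g$ is in general not $2\pi$-periodic, Lemma \ref{Lem:Cnm_est}~(i) accommodates arbitrary Lipschitz mappings on $\RR$ via the shifted target norm on $(\theta - \pi, \theta + \pi)$, which is precisely what legitimizes the substitution.
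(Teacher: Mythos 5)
Your proof is correct and follows essentially the same strategy as the paper's: use the supports of $\chi_j^\ve$ and $\psi$ to justify replacing $f - f'(x_j^\ve)\id_\RR$ by a globally Lipschitz function, agreeing with it up to an additive constant on $J_j^\ve$ and with $\nLp{\infty}$-small derivative, and then invoke Lemma~\ref{Lem:Cnm_est}~(i) with a shift $\theta$. The paper realizes the auxiliary function via a piecewise-defined $F_j$ with $F_j = f$ on $J_j^\ve$ and $F_j'\equiv f'(x_j^\ve)$ off $J_j^\ve$, so that $\|(F_j - f'(x_j^\ve)\id_\RR)'\|_\infty = \|f' - f'(x_j^\ve)\|_{\Lp{\infty}{J_j^\ve}}$ is immediate with no product rule needed, whereas your cutoff $\tilde g = \phi\,(g - g(x_j^\ve))$ reaches the same $O(\ve^{r-3/2})$ bound after a short, correct, product-rule estimate.
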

	\begin{proof}
Let $\ve\in(0,1) $. Since 
\[
T_j^\ve(f)[\tau_y(\pi_j^\ve \varphi)-\pi_j^\ve \varphi]
=\chi_j^\ve\big(C_{n+1,m}^0(f)-f'(x_j^\ve)C_{n,m}^0(f)\big)[\tau_y (\pi_j^\ve \varphi)-\pi_j^\ve \varphi]\in \Lp{2}{\Ss},
\]
we have by Lemma~\ref{Lem:Cnm_est}~(i) that
\begin{equation}\label{gf1}
\|T_j^\ve(f)[\tau_y (\pi_j^\ve \varphi)-\pi_j^\ve \varphi]\|_2=\|T_j^\ve(f)[\tau_y (\pi_j^\ve \varphi)-\pi_j^\ve \varphi]\|_{\Lp{2}{(x_j^\ve-\pi,x_j^\ve+\pi)}}.
\end{equation}
We now introduce the   Lipschitz continuous function $F_j:\RR\to\RR$ that satisfies~${F_j=f}$ on $J_j^\ve$	 and~$F_j'=f'(x_j^\ve)$ on $\RR\setminus J_j^\ve.$ 
Given $\xi\in(x_j^\ve-\pi,x_j^\ve+\pi)$, we then  have
\begin{equation}\label{gf2}
\begin{aligned}
&T_j^\ve(f)[\tau_y (\pi_j^\ve \varphi)-\pi_j^\ve \varphi](\xi)\\
&=\chi_j^\ve(\xi)\frac{1}{\pi}\PV\int\limits_{-\pi}^{\pi}\phi \Big(\frac{\dg{\xi,s}{f}}{s}\Big) \frac{\dg{\xi,s}{(f-f'(x_j^\ve)\id_\RR)}}{s} 
\frac{\big(\tau_y (\pi_j^\ve \varphi)-\pi_j^\ve \varphi\big)(\xi-s)}{s}\dx{s}\\
&=\chi_j^\ve(\xi)\frac{1}{\pi}\PV\int\limits_{-\pi}^{\pi}\phi \Big(\frac{\dg{\xi,s}{f}}{s}\Big)\frac{\dg{\xi,s}{(F_j-f'(x_j^\ve)\id_\RR)}}{s}
\frac{\big(\tau_y (\pi_j^\ve \varphi)-\pi_j^\ve \varphi\big)(\xi-s)}{s}\dx{s}\\
&=\big(\chi_j^\ve C_{n+1,m}(f,\dots,f)[f,\dots,f,F_j-f'(x_j^\ve)\id_\RR,\tau_y (\pi_j^\ve \varphi)-\pi_j^\ve \varphi]\big)(\xi),
\end{aligned}
\end{equation}
where $\phi(x)=x^n(1+x^2)^{-m},$ $x\in\RR.$
Indeed, if on the one hand $\xi\in (x_j^\ve-\pi,x_j^\ve+\pi)\setminus J_j^\ve,$ this is a consequence of $\chi_j^\ve(\xi)=0.$
If on the other hand $\xi\in J_j^\ve$, then $f(\xi)=F_j(\xi)$ by the definition of $F_j$. 
Moreover, since~${|s|<\pi}$, for sufficiently small $\ve$  we have $\xi-s\in (x_j^\ve-3\pi/2,x_j^\ve+3\pi/2),$ while
$\supp\pi_j^\ve\cap  (x_j^\ve-3\pi/2,x_j^\ve+3\pi/2)=I_j^\ve.$
Therefore, for~${\xi-s\not\in J_j^\ve}$ it holds $\xi-s+y\not\in I_j^\ve$ for all~${|y|\leq\ve}$, hence~${\pi_j^\ve \varphi(\xi-s)=\tau_y (\pi_j^\ve \varphi)(\xi-s)=0}$. 
Consequently, the integrand is not zero at most when $\xi-s\in J_j^\ve$, and in this case we also have   $f(\xi-s)=F_j(\xi-s).$ This proves~\eqref{gf2}.
 
Lemma~\ref{Lem:Cnm_est}~(i) together with \eqref{gf1}, \eqref{gf2}, and the definition of $F_j$  enables us to deduce that there exists a constant~$C>0$ such that for 
all $1\leq j\leq N,$ $|y|\leq \ve$, and ~${\varphi\in\Lp{2}{\Ss}}$ we have
\begin{align*}
\|T_j^\ve(f)[\tau_y (\pi_j^\ve \varphi)-\pi_j^\ve \varphi]\|_2\leq C\|f'-f'(x_j^\ve)\|_{\Lp{\infty}{J_j^\ve}}\|\tau_y (\pi_j^\ve \varphi)-\pi_j^\ve \varphi\|_2.
\end{align*}
The   estimate \eqref{eq:chiC} follows by choosing $\ve\in(0,1)$ sufficiently small in view of~${f'\in \rmC^{r-3/2}(\Ss).}$
	\end{proof}
	
	We are now in a position to establish Lemma \ref{Lem:Cnm_approx_a}.

	\begin{proof}[Proof of Lemma~\ref{Lem:Cnm_approx_a}]
In the following, we denote constants that do not depend on $\ve$ by~$C$ and constants that depend on $\ve$ by~$K$. 

Recalling that $ H=B_{0,0}^{0,0}$, cf. \eqref{eq:HT},  the relation~$ H=A^{1,0}_{0,0}+C_{0,0}$, cf. \eqref{eq:B=A+C}, together with Lemma~\ref{Lem:Anmq_Bnmpq_inf} yields
\[
\|( H-C_{0,0})[\pi_j^\ve\varphi]\|_{\nHp{r-1}}\leq C\|A^{1,0}_{0,0}[\pi_j^\ve\varphi]\|_{\rmC^1}\leq C\|\pi_j^\ve\varphi\|_\infty\leq K\norm{\varphi}_{\nHp{r'-1}},
\] 
and therefore
		\begin{equation*}
		\begin{aligned}
			&\bigg\|\pi_j^\ve a C_{n,m}^0 (f)[b\varphi]-\frac{a(x_j^\ve)b(x_j^\ve)(f'(x_j^\ve))^n}{\big[1+(f'(x_j^\ve))^2\big]^m}H[\pi_j^\ve \varphi]\bigg\|_{\nHp{r-1}}\\
			&  \leq \bigg\|\pi_j^\ve a C_{n,m}^0 (f)[b\varphi]-\frac{a(x_j^\ve)b(x_j^\ve)(f'(x_j^\ve))^n}{\big[1+(f'(x_j^\ve))^2\big]^m}C_{0,0}[\pi_j^\ve \varphi]\bigg\|_{\nHp{r-1}}+K\norm{\varphi}_{\nHp{r'-1}}.	
			\end{aligned}
		\end{equation*}
	To estimate the first term on the left-hand side of the latter inequality we write
		\begin{equation*}
			\pi_j^\ve a C_{n,m}^0 (f)[b\varphi]-\frac{a(x_j^\ve)b(x_j^\ve)(f'(x_j^\ve))^n}{\big[1+(f'(x_j^\ve))^2\big]^m}C_{0,0}[\pi_j^\ve \varphi]=a(T_1+T_2)+b(x_j^\ve)(T_3+ a(x_j^\ve)T_4),
		\end{equation*}
where
		\begin{equation*}
		\begin{aligned}
			T_1 &:= \pi_j^\ve C_{n,m}^0 (f)[(b-b(x_j^\ve))\varphi]-C_{n,m}^0(f)[\pi_j^\ve(b-b(x_j^\ve))\varphi],\\
			T_2 &:= C_{n,m}^0(f)[\pi_j^\ve(b-b(x_j^\ve))\varphi],\\
			T_3 &:= \pi_j^\ve a C_{n,m}^0(f)[\varphi]-a(x_j^\ve)C_{n,m}^0 (f)[\pi_j^\ve \varphi],\\
			T_4 &:= C_{n,m}^0(f)[\pi_j^\ve\varphi]-\frac{(f'(x_j^\ve))^n}{\big[1+(f'(x_j^\ve))^2\big]^m}C_{0,0}[\pi_j^\ve \varphi].
		\end{aligned}
		\end{equation*}
		We consider these terms successively.\medskip
		
		\noindent{\bf The term  $aT_1$.} In view of  Lemma~\ref{Lem:Cnm_com} and of the algebra property of $\Hp{r-1}{\Ss}$,  we have
		\begin{equation}\label{eq:T1_est}
			\norm{aT_1}_{\nHp{r-1}}\leq K\norm{(b-b(x_j^\ve))\varphi}_2\leq K\norm{\varphi}_{\nHp{r'-1}}.
		\end{equation}		
				
	\noindent{\bf The term  $aT_2$.} We use Lemma~\ref{Lem:Cnm_est}~(iii), \eqref{eq:Hr_BAlg}, the identity $\chi_j^\ve \pi_j^\ve =\pi_j^\ve$,  and the algebra property of $\Hp{r-1}{\Ss}$ to obtain,
	 in view of~$b\in\rmC^{r-3/2}(\Ss),$ that
		\begin{equation}\label{eq:T2_est}
		\begin{aligned}
			\norm{aT_2}_{\nHp{r-1}}&\leq C\norm{\pi_j^\ve(b-b(x_j^\ve))\varphi}_{\nHp{r-1}}\leq C\norm{\chi_j^\ve(b-b(x_j^\ve))}_\infty \norm{\pi_j^\ve\varphi}_{\nHp{r-1}}
			+K\norm{\varphi}_{\nHp{r'-1}}\\
			&\leq (\eta/3)\norm{\pi_j^\ve\varphi}_{\nHp{r-1}}+K\norm{\varphi}_{\nHp{r'-1}},
		\end{aligned}		
		\end{equation}
		provided that $\ve\in(0,1)$ is sufficiently small.\medskip
		
\noindent{\bf The term  $b(x_j^\ve)T_3$.}  Since $\chi_j^\ve \pi_j^\ve =\pi_j^\ve$, we have $T_3=T_{3,1}+T_{3,2}+T_{3,3}$, where
		\begin{equation*}
		\begin{aligned}
			T_{3,1}&:=(\chi_j^\ve a)\big(\pi_j^\ve C_{n,m}^{0}(f)[\varphi]- C_{n,m}^{0}(f)[\pi_j^\ve\varphi]\big),\\
			T_{3,2}&:=\chi_j^\ve (a-a(x_j^\ve)) C_{n,m}^{0}(f)[\pi_j^\ve \varphi],\\
			T_{3,3}&:=a(x_j^\ve)\big(\chi_j^\ve C_{n,m}^0(f)[\pi_j^\ve\varphi]-C_{n,m}^0(f)[\chi_j^\ve(\pi_j^\ve\varphi)]\big),
		\end{aligned}
		\end{equation*}
		and Lemma~\ref{Lem:Cnm_com} yields
		\begin{equation*}
			\norm{b(x_j^\ve)T_{3,1}}_{\nHp{r-1}}+\norm{b(x_j^\ve)T_{3,3}}_{\nHp{r-1}}\leq K\norm{\varphi}_{\nHp{r'-1}}.
		\end{equation*}
		Moreover,  \eqref{eq:Hr_BAlg}, Lemma~\ref{Lem:Cnm_est}~(iii),  and the property $a\in\rmC^{r-3/2}(\Ss)$ lead us to
		\begin{equation*} 
		\begin{aligned}
			\norm{b(x_j^\ve)T_{3,2}}_{\nHp{r-1}}&\leq C\norm{\chi_j^\ve(a-a(x_j^\ve))}_\infty\|C_{n,m}^{0}(f)[\pi_j^\ve \varphi]\|_{\nHp{r-1}}+K\|C_{n,m}^{0}(f)[\pi_j^\ve \varphi]\|_{\nHp{r'-1}}\\
			&\leq(\eta/3)\norm{\pi_j^\ve\varphi}_{\nHp{r-1}}+K\norm{\varphi}_{\nHp{r'-1}},
		\end{aligned}
		\end{equation*}
		provided that $\ve\in(0,1)$ is small enough, and therefore  
		\begin{equation}\label{eq:T3_est}
			\norm{b(x_j^\ve)T_3}_{\nHp{r-1}}\leq (\eta/3)\norm{\pi_j^\ve\varphi}_{\nHp{r-1}}+K\norm{\varphi}_{\nHp{r'-1}}.
		\end{equation}
		
		\noindent{\bf The term  $(ab)(x_j^\ve)T_4$.} Using again the relation $\chi_j^\ve \pi_j^\ve =\pi_j^\ve$, we have $T_4=T_{4,1}+T_{4,2}$, where
		\begin{equation*}
		\begin{aligned}
			T_{4,1}&:= \frac{(f'(x_j^\ve))^n}{\big[1+(f'(x_j^\ve))^2\big]^m}\big(\chi_j^\ve C_{0,0}[\pi_j^\ve \varphi]-C_{0,0}[\chi_j^\ve(\pi_j^\ve \varphi)]\big)\\
			&\qquad-\big(\chi_j^\ve C^{0}_{n,m}(f)[\pi_j^\ve \varphi]-C^{0}_{n,m}(f)[\chi_j^\ve (\pi_j^\ve \varphi)]\big),\\
			T_{4,2}&:=  \chi_j^\ve \bigg(C_{n,m}^{0}(f)[\pi_j^\ve \varphi]- \frac{(f'(x_j^\ve))^n}{\big[1+(f'(x_j^\ve))^2\big]^m} C_{0,0}[\pi_j^\ve \varphi]\bigg),
		\end{aligned}
		\end{equation*}
		and, by Lemma~\ref{Lem:Cnm_com},
		\begin{equation}\label{eq:T4_1_est}
			\norm{T_{4,1}}_{\nHp{r-1}}\leq K\norm{\varphi}_{2}.
		\end{equation}
		It remains to estimate the term $T_{4,2}$ for which we  first use Lemma~\ref{Lem:Cnm_est}~(i) to deduce that
		\begin{equation}\label{eq:T4_2_est}
			\norm{T_{4,2}}_2\leq K\norm{\varphi}_2.
		\end{equation}
		In order to estimate the seminorm $[T_{4,2}]_{\nWp{r-1}}$, we note,  by using~\eqref{eq:Cnm_dif_a}
		 together with the identity $f'(x_j^\ve)=\delta_{[\xi,s]}(f'(x_j^\ve)\id_\RR)/s$, that
		\begin{equation*} 
		\begin{aligned}
			T_{4,2} &= \sum_{k=0}^{n-1} (f'(x_j^\ve))^{n-k-1}\chi_j^\ve C_{k+1,m}(f,\dots,f)[f,\dots,f,f-f'(x_j^\ve)\id_\RR,\pi_j^\ve \varphi]\\
			&\quad-\sum_{k=0}^{m-1}\frac{(f'(x_j^\ve))^n}{\big[1+(f'(x_j^\ve))^2\big]^{m-k}}\chi_j^\ve C_{2,k+1}(f,\dots,f)[f,f-f'(x_j^\ve)\id_\RR,\pi_j^\ve \varphi]\\
			&\quad-\sum_{k=0}^{m-1}\frac{(f'(x_j^\ve))^{n+1}}{\big[1+(f'(x_j^\ve))^2\big]^{m-k}}\chi_j^\ve C_{1,k+1}(f,\dots,f)[f-f'(x_j^\ve)\id_\RR,\pi_j^\ve \varphi].
		\end{aligned}
		\end{equation*}
	Consequently,
	\begin{equation}\label{eq:T4_2_dec}
		\begin{aligned}
			[T_{4,2}]_{\nWp{r-1}} &\leq C_0\bigg( \sum_{k=0}^{n-1} \big[\chi_j^\ve C_{k+1,m}(f,\dots,f)[f,\dots,f,f-f'(x_j^\ve)\id_\RR,\pi_j^\ve \varphi]\big]_{\nWp{r-1}}\\
			&\hspace{1.25cm}+\sum_{k=0}^{m-1}\big[\chi_j^\ve C_{2,k+1}(f,\dots,f)[f,f-f'(x_j^\ve)\id_\RR,\pi_j^\ve \varphi]\big]_{\nWp{r-1}}\\
			&\hspace{1.25cm}+\sum_{k=0}^{m-1}\big[\chi_j^\ve C_{1,k+1}(f,\dots,f)[f-f'(x_j^\ve)\id_\RR,\pi_j^\ve \varphi]\big]_{\nWp{r-1}}\bigg).
		\end{aligned}
		\end{equation}
		Set
		\begin{equation*}
			S_k:= \chi_j^\ve C_{k+1,m}(f,\dots,f)[f,\dots,f,f-f'(x_j^\ve)\id_\RR,\pi_j^\ve \varphi],\qquad 0\leq k \leq n-1.
		\end{equation*}
		In order to estimate the $\nWp{r-1}$-seminorm of $S_k,$ we write for $ y\in(-\pi,\pi)$ 
		\begin{equation*}
			 \tau_y S_k-S_k =S_{k,1}+S_{k,2}+\chi_j^\ve S_{k,3},
		\end{equation*}
		where, using again \eqref{eq:Cnm_dif_a}, we have
		\begin{equation*}
		\begin{aligned}
			S_{k,1}&:=  (\tau_y \chi_j^\ve-\chi_j^\ve)\tau_y C_{k+1,m}(f,\dots,f)[f,\dots,f,f-f'(x_j^\ve)\id_\RR,\pi_j^\ve \varphi],\\
			S_{k,2}&:=  \chi_j^\ve C_{k+1,m}(f,\dots,f)[f,\dots,f,f-f'(x_j^\ve)\id_\RR,\tau_y(\pi_j^\ve \varphi)-\pi_j^\ve \varphi],\\
			S_{k,3}&:=  \sum_{i=1}^{k}C_{k+1,m}(f,\dots,f)[\underbrace{ f,\dots, f}_{i-1}, \tau_y f-f,\tau_y f,\dots,\tau_y f,f-f'(x_j^\ve)\id_\RR,\tau_y(\pi_j^\ve \varphi)]\\
			&\,\quad+C_{k+1,m}(f,\dots,f)[\tau_y f,\dots,\tau_y f,  \tau_y f-f,\tau_y(\pi_j^\ve \varphi)]\\
			&\,\quad-\sum_{i=1}^{m}C_{k+3,m+1}^i[\tau_y f,\dots, \tau_y f,\tau_y f-f'(x_j^\ve)\id_\RR, \tau_y f+f,\tau_y f-f, \tau_y (\pi_j^\ve \varphi)],
		\end{aligned}
		\end{equation*}
		and
		\begin{equation*}
			C_{k+3,m+1}^i:=C_{k+3,m+1}(\underbrace{f,\dots,f}_{i},\tau_y f,\dots, \tau_y f).
		\end{equation*}
		  Lemma~\ref{Lem:Cnm_est}~(iii)  (with $r=r'$) yields
		\begin{equation*}
			\norm{S_{k,1}}_2\leq K\norm{\tau_y \chi_j^\ve-\chi_j^\ve}_2 \norm{\varphi}_{\nHp{r'-1}}.
		\end{equation*}
		To estimate $S_{k,2}$ we consider two cases. If $|y|> \ve$, we use Lemma~\ref{Lem:Cnm_est}~(i) and obtain
		\begin{equation*}
			\norm{S_{k,2}}_2\leq K\norm{\varphi}_2.
		\end{equation*}		 
		If $|y|\leq\ve$, we use \eqref{eq:chiC} which  gives
		\begin{equation*}
			\norm{S_{k,2}}_2\leq (\eta/C_1) \norm{\tau_y(\pi_j^\ve \varphi)-\pi_j^\ve \varphi}_2,
		\end{equation*}
		provided that $\ve\in(0,1)$ is small enough, with a positive constant $C_1$ which we fix below. 
		Finally, Lemma~\ref{Lem:Cnm_est}~(ii) (with $r=r'$)   leads us to
		\begin{equation*}
			\norm{\chi_j^\ve S_{k,3}}_2\leq K\norm{\tau_y f'-f'}_2 \norm{\varphi}_{\nHp{r'-1}}.
		\end{equation*}
		Combining the above estimates, we have
		\begin{equation}\label{eq:Tk}
			[S_k]_{\nWp{r-1}}\leq  (\eta/C_1) \norm{\pi_j^\ve \varphi}_{\nHp{r-1}}+K\norm{\varphi}_{\nHp{r'-1}}.
		\end{equation}
		It is now obvious that actually all the terms on the right-hand side of \eqref{eq:T4_2_dec} can be estimated by the right-hand side of \eqref{eq:Tk}, provided that $\ve\in(0,1)$ is sufficiently small.
		From the estimates~\eqref{eq:T4_2_est}-\eqref{eq:Tk} we then deduce, after choosing $C_1:=3CC_0(n+2m)(1+\|ab\|_\infty),$ that 
		\begin{equation*}
		\begin{aligned}
		\norm{T_{4,2}}_{\nHp{r-1}}&\leq C(\| T_{4,2}\|_{2}+[ T_{4,2}]_{\nWp{r-1}})\leq \frac{CC_0(n+2m)\eta}{C_1}\norm{\pi_j^\ve \varphi}_{\nHp{r-1}}+K\norm{\varphi}_{\nHp{r'-1}}\\
		&\leq \frac{\eta}{3(1+\|ab\|_\infty)}\norm{\pi_j^\ve \varphi}_{\nHp{r-1}}+K\norm{\varphi}_{\nHp{r'-1}},
		\end{aligned}	
		\end{equation*}
		and together with \eqref{eq:T4_1_est} we get
		\begin{equation}\label{eq:T4_est}
			\norm{(ab)(x_j^\ve)T_4}_{\nHp{r-1}}\leq (\eta/3)\norm{\pi_j^\ve\varphi}_{\nHp{r-1}}+K\norm{\varphi}_{\nHp{r'-1}}.
		\end{equation}
		
 Gathering \eqref{eq:T1_est}-\eqref{eq:T3_est} and \eqref{eq:T4_est}, we obtain \eqref{eq:Cnm_approx_a}, and the proof is complete.
		\end{proof}

	\section{The behavior of the pressure and velocity near the interface and in the far-field}\label{Sec:C}	
	
	In this section we consider the function $(v^\pm,q^\pm)$ defined in \eqref{eq:vq}-\eqref{eq:vqdef} and prove, under the assumptions in Theorem~\ref{Thm:FT_unique}, 
	 that $(v^\pm,q^\pm)$ satisfies the boundary conditions~\mbox{\eqref{eq:refStokes}$_3$-\eqref{eq:refStokes}$_4$}, 
	 as well as the far field boundary condition~\eqref{eq:refStokes}$_5$, see Lemma~\ref{Lem:q_bd} and Lemma~\ref{Lem:v_bd} below.
	
	  Thus, in this section  we fix $f\in\Hp{3}{\Ss}$ and use the notation introduced in Section~\ref{Sec:FTP}. Additionally,  for  a given function $w:(\Ss\times\RR)\setminus\Gamma\to\RR$, 
	  we set $w^\pm:=w\big|_{\Omega^\pm}$ and  denote by
		\begin{equation*}
			\set{w}^\pm\circ\Xi(\xi):= \lim_{\Omega^{\pm} \ni x\to (\xi,f(\xi))}w(x),\quad \xi\in \Ss,
		\end{equation*}
		 the one-sided limits of  $w $ in $\Xi(\xi)$, whenever these limits exist. 
		 Moreover, given $w^\pm:\Omega^\pm\to\RR$, we set $w:={\bf 1}_{\Omega^+}w^++{\bf 1}_{\Omega^-}w^-$, which is viewed as a function defined a.e. in $\Ss\times\RR.$

In order to investigate  the gradient $\nabla v^\pm$, which  is determined by simply differentiating under the integral sign in~\eqref{eq:vqdef} (see the proof of Theorem~\ref{Thm:FT_unique}),
 we infer from \eqref{eq:UPdef} that, for given  $ x\in(\Ss\times\RR)\setminus\{0\}$,  the first order partial derivatives of $\mathcal{U}$  are given by  
		\begin{equation}\label{eq:deriv_U}
		\begin{aligned}		
		\partial_{1}{\mcU^{1}}^\top(x)&=\frac{1}{8\pi}
		\begin{pmatrix}
		\frac{\tx(1-\Tx^2)}{\tx^2+\Tx^2}-x_2\frac{\tx\Tx(1+\tx^2)(1-\Tx^2)}{(\tx^2+\Tx^2)^2}\\[2ex]
		\frac{x_2}{2}\frac{(1+\tx^2)(1-\Tx^2)(\tx^2-\Tx^2)}{(\tx^2+\Tx^2)^2}
		\end{pmatrix},\\
		\partial_{2}{\mcU^{1}}^\top(x)&=\frac{1}{8\pi}
		\begin{pmatrix}
		2\frac{\Tx(1+\tx^2)}{\tx^2+\Tx^2}+\frac{x_2}{2}\frac{(1+\tx^2)(1-\Tx^2)(\tx^2-\Tx^2)}{(\tx^2+\Tx^2)^2}\\[2ex]
		-\frac{\tx(1-\Tx^2)}{\tx^2+\Tx^2}+x_2\frac{\tx\Tx(1+\tx^2)(1-\Tx^2)}{(\tx^2+\Tx^2)^2}	
		\end{pmatrix},\\
		\partial_{1}{\mcU^{2}}^\top(x)&=\frac{1}{8\pi}
		\begin{pmatrix}
		\frac{x_2}{2}\frac{(1+\tx^2)(1-\Tx^2)(\tx^2-\Tx^2)}{(\tx^2+\Tx^2)^2}\\[2ex]
		\frac{\tx(1-\Tx^2)}{\tx^2+\Tx^2}+x_2\frac{\tx\Tx(1+\tx^2)(1-\Tx^2)}{(\tx^2+\Tx^2)^2}
		\end{pmatrix},\\
		\partial_{2}{\mcU^{2}}^\top(x)&=\frac{1}{8\pi}
		\begin{pmatrix}
		-\frac{\tx(1-\Tx^2)}{\tx^2+\Tx^2}+x_2\frac{\tx\Tx(1+\tx^2)(1-\Tx^2)}{(\tx^2+\Tx^2)^2}\\[2ex]
		-\frac{x_2}{2}\frac{(1+\tx^2)(1-\Tx^2)(\tx^2-\Tx^2)}{(\tx^2+\Tx^2)^2}
		\end{pmatrix}.\\
		\end{aligned}
		\end{equation}

This motivates us to establish first the following preparatory result. 
	
\begin{Lemma}\label{Lem:C1} Given~${\varphi \in\Lp{2}{\Ss}}$,   let $	Z_n(f)[\varphi]:(\Ss\times\RR)\setminus\Gamma\to\RR$, $ 1\leq n\leq 6,$ 
be defined by
\begin{subequations}\label{eq:defZ}
\begin{equation}
		\begin{aligned}
			Z_1(f)[\varphi](x) &:= \frac{1}{2\pi}\int_{-\pi}^\pi\frac{\ts{r_1}(1-\T{r_2}^2)}{\ts{r_1}^2+\T{r_2}^2}\varphi(s)\dx{s},\\
			Z_2(f)[\varphi](x) &:= \frac{1}{2\pi}\int_{-\pi}^\pi\frac{\T{r_2}(1+\ts{r_1}^2)}{\ts{r_1}^2+\T{r_2}^2}\varphi(s)\dx{s},
		\end{aligned}
		\end{equation}
		\begin{equation}
		\begin{aligned}
			Z_3(f)[\varphi](x) &:= \frac{1}{2\pi}\int_{-\pi}^\pi\frac{r_2}{2}\frac{(1+\ts{r_1}^2)(1-\T{r_2}^2)(\ts{r_1}^2-\T{r_2}^2)}{(\ts{r_1}^2+\T{r_2}^2)^2}\varphi(s)\dx{s},\\
			Z_4(f)[\varphi](x) &:= \frac{1}{2\pi}\int_{-\pi}^\pi\frac{r_2}{2}\frac{\ts{r_1}\T{r_2}(1+\ts{r_1}^2)(1-\T{r_2}^2)}{(\ts{r_1}^2+\T{r_2}^2)^2}\varphi(s)\dx{s},\\
			Z_5(f)[\varphi](x) &:=\frac{1}{2\pi} \int_{-\pi}^\pi r_2\frac{\ts{r_1}(1-\T{r_2}^2)}{\ts{r_1}^2+\T{r_2}^2}\varphi(s)\dx{s},\\
			Z_6(f)[\varphi](x) &:= \frac{1}{2\pi}\int_{-\pi}^\pi r_2\frac{\T{r_2}(1+\ts{r_1}^2)}{\ts{r_1}^2+\T{r_2}^2}\varphi(s)\dx{s},
		\end{aligned}
		\end{equation}
\end{subequations}
		where  $r=(r_1,r_2)$ is defined by
			\begin{equation}\label{eq:rM}
 r:=r(x,s):=x-(s,f(s)),\qquad x\in\Omega^\pm,\,s\in\RR.
	\end{equation}
We set
	\begin{equation}\label{eq:def_B}
			B_{n}(f)[\varphi](\xi):= \PV\,\,\big(Z_{n}(f)[\varphi](\Xi(\xi))\big),\qquad 1\le n\le 6,\quad \xi\in\Ss.
		\end{equation}
	Then, $Z_n(f)[\varphi]^\pm\in{\rm C}^\infty(\Omega^\pm)$, $1\leq n\leq 6$, and     $Z_n(f)[\varphi] \in {\rm C}(\Ss\times\RR)$, $n=5,\, 6$,  with
		\begin{equation}\label{eq:Z_pm1}
			\big\{ Z_{n}(f)[\varphi] \big\}^\pm\circ\Xi =  B_{n}(f)[\varphi],\qquad  n=5,\, 6.
		\end{equation}
	Moreover, if additionally $\varphi \in \Hp{1}{\Ss}$, then  $Z_n(f)[\varphi]^\pm \in {\rm C}(\overline{\Omega^\pm})$, $1\leq n\leq 4,$
	and
		\begin{equation}\label{eq:Z_pm2}
			\left\{\begin{pmatrix}
			Z_{1}(f)[\varphi]\\ Z_{2}(f)[\varphi]\\ Z_{3}(f)[\varphi]\\ Z_{4}(f)[\varphi] 
			\end{pmatrix} \right\}^\pm\circ\Xi = \begin{pmatrix}
			B_{1}(f)[\varphi]\\ B_{2}(f)[\varphi]\\ B_{3}(f)[\varphi]\\ B_{4}(f)[\varphi] 
			\end{pmatrix}\pm   \frac{1}{\omega^2}
			\begin{pmatrix}
			-f' \\1 \\-\frac{2f'^2}{\omega^2}\\ \frac{f'-f'^3}{2\omega^2} 
			\end{pmatrix} \varphi.
		\end{equation}
\end{Lemma}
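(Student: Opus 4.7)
I would establish the three assertions---interior smoothness, continuity of $Z_5,Z_6$ across~$\Gamma$, and the Plemelj-type jumps for $Z_1,\dots,Z_4$---in turn. Interior smoothness is a routine parameter-integral argument: on each compact $K\Subset\Omega^\pm$ the denominator $t_{[r_1]}^2+T_{[r_2]}^2$ is bounded below (it vanishes only on~$\Gamma$), so the integrands and all their $x$-derivatives are uniformly bounded in $s\in\Ss$, and $\varphi\in\Lp{2}{\Ss}\subset\Lp{1}{\Ss}$ justifies differentiation under the integral sign to any order, yielding $Z_n(f)[\varphi]^\pm\in\rmC^\infty(\Omega^\pm)$ for every~$n$.

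For the continuity of $Z_5,Z_6$ on all of $\Ss\times\RR$, the factor $r_2=x_2-f(s)$ completely absorbs the kernel singularity. Combining $|T_{[r_2]}|\le|r_2|/2$, $|t_{[r_1]}|\ge|r_1|/2$ for $|r_1|\le\pi$, and the Lipschitz estimate $|r_2|\le|x_2-f(\xi)|+\norm{f'}_\infty|s-\xi|$ near any reference $\xi\in\Ss$, one checks the pointwise bound
\[
\bigg|\frac{r_2\,t_{[r_1]}(1-T_{[r_2]}^2)}{t_{[r_1]}^2+T_{[r_2]}^2}\bigg|+\bigg|\frac{r_2\,T_{[r_2]}(1+t_{[r_1]}^2)}{t_{[r_1]}^2+T_{[r_2]}^2}\bigg|\le C
\]
uniformly in $(x,s)\in(\Ss\times\RR)\times\Ss$. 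Hence the $Z_5,Z_6$ integrands are dominated by $C|\varphi(s)|\in\Lp{1}{\Ss}$, and dominated convergence gives joint continuity in $x$ together with the identities $\{Z_n\}^\pm\circ\Xi=B_n$ for $n=5,6$, without need of the $\PV$.

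The jumps for $Z_1,\dots,Z_4$ exploit a complex-variable representation of the kernels. From the trigonometric identity
\[
\cot\!\Big(\frac{a+ib}{2}\Big)=\frac{t_{[a]}(1-T_{[b]}^2)-iT_{[b]}(1+t_{[a]}^2)}{t_{[a]}^2+T_{[b]}^2},
\]
setting $z:=x_1+ix_2$ and $w(s):=s+if(s)\in\Gamma$, I recognize
\[
(Z_1-iZ_2)(f)[\varphi](x)=\frac{1}{2\pi}\int_{-\pi}^{\pi}\cot\!\Big(\frac{z-w(s)}{2}\Big)\varphi(s)\,\dx{s}
\]
as the periodic Cauchy-type integral along~$\Gamma$. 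For the model density $\varphi\equiv\varphi(\xi)$, an explicit residue calculation along the normal approach $x=\Xi(\xi)+\ve\tnu(\xi)$---using the asymptotic $z-w(s)\simeq w'(\xi)\bigl((\xi-s)+i\ve\omega(\xi)^{-1}\bigr)$ with $w'(\xi)=1+if'(\xi)$, combined with the Plemelj limit $\lim_{\ve\to 0^+}\int_{-\delta}^{\delta}\dx{u}/(u+i\ve\omega(\xi)^{-1})=-i\pi$---produces the jump $\mp(f'(\xi)+i)\varphi(\xi)/\omega(\xi)^2$. The remainder with density $\varphi(s)-\varphi(\xi)\in\rmC^{1/2}(\Ss)$ (via the embedding $\Hp{1}{\Ss}\hookrightarrow\rmC^{1/2}(\Ss)$) has an integrable kernel and extends continuously across~$\Gamma$ to match the $\PV$ defining $B_1-iB_2$. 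Taking real and imaginary parts recovers the stated jumps for $Z_1,Z_2$.

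The operators $Z_3,Z_4$ are treated analogously via
\[
\csc^2\!\Big(\frac{a+ib}{2}\Big)=\frac{(1+t_{[a]}^2)(1-T_{[b]}^2)\bigl[(t_{[a]}^2-T_{[b]}^2)-2it_{[a]}T_{[b]}\bigr]}{(t_{[a]}^2+T_{[b]}^2)^2},
\]
which yields $(Z_3-2iZ_4)(f)[\varphi](x)=\tfrac{1}{4\pi}\int_{-\pi}^{\pi}r_2\csc^2((z-w(s))/2)\varphi(s)\,\dx{s}$. The double pole of $\csc^2$ is reduced to a simple one by $r_2\simeq f'(\xi)(\xi-s)+\ve\omega(\xi)^{-1}$ along the normal approach, and the same Plemelj mechanism---splitting $(\ve/\omega+f'u)/(u+\gamma)^2$ with $\gamma=i\ve/\omega(\xi)$ into a bounded $(u+\gamma)^{-2}$-contribution and a singular $f'/(u+\gamma)$-contribution---produces the jump $\mp if'(\xi)\varphi(\xi)/w'(\xi)^2$, which, after the algebraic simplification $-if'/w'^2=-[2f'^2+if'(1-f'^2)]/\omega^4$, matches on taking real and imaginary parts the coefficients $\mp 2f'^2/\omega^4$ and $\pm(f'-f'^3)/(2\omega^4)$ in the statement. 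The main obstacle is this bookkeeping for $Z_3,Z_4$: one must carefully isolate the tangential part of $r_2$ which carries the residue from the transverse part $\ve/\omega$ (which drops out in the limit) and verify that the algebra of $w'(\xi)^{-2}$ produces precisely the required $\omega^{-4}$ scaling; everything else reduces to classical Plemelj--Sokhotski analysis on a smooth closed curve.
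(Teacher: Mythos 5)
Your proposal is correct in its conclusions but takes a genuinely different route from the paper, and it leaves more technical work implicit than the paper's argument does.

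\textbf{Comparison of approaches.} The paper treats the four singular operators asymmetrically. For $Z_1,Z_2$ it invokes a previously established jump result (\cite[Lemma~2.2]{Matioc.2020}) directly, obtaining~\eqref{eq:lin1}. For $Z_3,Z_4$ it avoids the double-pole kernel entirely: integrating $Z_5(f)[\varphi']$ and $Z_6(f)[\varphi']$ by parts produces the exact algebraic identities
\[
Z_5(f)[\varphi']=Z_{1}(f)[f'\varphi]-Z_{3}(f)[\varphi]-2Z_{4}(f)[f'\varphi],\qquad
Z_6(f)[\varphi']=Z_{2}(f)[f'\varphi]+Z_{3}(f)[f'\varphi]-2Z_{4}(f)[\varphi],
\]
and since the left-hand sides are continuous across $\Gamma$ by~\eqref{eq:Z_pm1}, the jumps of $Z_3$ and $Z_4$ are determined by those of $Z_1,Z_2$ and a small linear system. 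No new singular-kernel analysis is needed. You instead compute the jumps of all four operators directly, via the complex Cauchy representation $Z_1-iZ_2=\tfrac{1}{2\pi}\int\cot\bigl(\tfrac{z-w(s)}{2}\bigr)\varphi\,\mathrm{d}s$ and $Z_3-2iZ_4=\tfrac{1}{4\pi}\int r_2\csc^2\bigl(\tfrac{z-w(s)}{2}\bigr)\varphi\,\mathrm{d}s$, freezing the density at $\xi$, and applying the Plemelj--Sokhotski limit along the normal approach. Both kernel identities are correct, and the residue bookkeeping $-i/w'=-\,(f'+i)/\omega^2$ and $-if'/w'^2=-[2f'^2+i(f'-f'^3)]/\omega^4$ reproduces exactly the coefficients in~\eqref{eq:Z_pm2}, so the method is sound. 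The trade-off is that the paper's route is modular and reuses a reference result for $Z_1,Z_2$ and pure algebra for $Z_3,Z_4$, whereas yours is a self-contained first-principles calculation. Your route does leave some work understated: you need a uniform-in-$\ve$ integrable majorant for the remainder with density $\varphi(s)-\varphi(\xi)$ (the $\rmC^{1/2}$ H\"older bound supplies one, e.g.\ via $u^2+\ve^2\ge 2\ve|u|$ to absorb the transverse piece of the double pole), an estimate for the error of replacing $w(s)$ by its linearization at $\xi$, and an argument identifying the continuous limit of the remainder integral with the $\PV$ used to define $B_n$. These are standard for Cauchy integrals on a $\rmC^{1,\alpha}$ curve, but they are precisely the points the paper's reduction through $Z_5,Z_6$ sidesteps.
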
	

Related to the definition \eqref{eq:def_B}, we note that  we may evaluate the integrals  \eqref{eq:defZ} at~$\Xi(\xi)$ with~$\xi\in\Ss$, 
provided that we interpret some of  the integrals as being  singular,  see
		 Lemma~\ref{Lem:Anmq_Bnmpq_inf} and Lemma~\ref{Lem:Bnmq_L2_L2}, 
		 since the operators $B_n(f)$, $1\leq n\leq 6$, can be represented as linear combinations of the operators 
		$B_{n,m}^{p,q}(f)$, $n,\, m,\,p,\,\,q\in\NN_0$, ${1\leq p\leq n+q+1},$ defined in \eqref{eq:B(f)}, see \eqref{eq:B_by_Bnmpq}.
		In fact, Lemma~\ref{Lem:Anmq_Bnmpq_inf} ensures that  $ B_n(f)[\varphi]\in{\rm C}(\Ss)$, $n=5,\, 6$, while, for $\varphi \in \Hp{1}{\Ss}$,
 we also have~${ B_n(f)[\varphi]\in{\rm C}(\Ss)}$, $1\leq n\leq 4$, cf.  Lemma~\ref{Lem:Anmq_Bnmpq_inf} and Lemma~\ref{Lem:Bnmq_Hr-1_Hr-1}. 
	
	\begin{proof}[Proof of Lemma~\ref{Lem:C1}]
	Arguing as in the proof of Theorem~\ref{Thm:FT_unique}, it immediately follows that the function~$Z_n(f)[\varphi]^\pm$ belongs to $ {\rm C}^\infty(\Omega^\pm)$  for $1\leq n\leq 6$.
	Moreover,	 Lebesgue's dominated convergence   theorem  leads to
		\begin{equation*}
			 \{Z_n(f)[\varphi] \}^\pm\circ\Xi = B_n(f)[\varphi]\in {\rm C}(\Ss),\qquad n=5,\, 6,
		\end{equation*}
		so that $Z_n(f)[\varphi] \in {\rm C}(\Ss\times\RR)$ for $n=5,\, 6$. This proves \eqref{eq:Z_pm1}.
	
	In the remaining we assume that   $\varphi \in \Hp{1}{\Ss}$. 
	Since~${B_n(f)\in{\rm C}(\Ss)}$, $n=1,\,2$,   together with~\cite[Lemma~2.2]{Matioc.2020}, we  conclude   that  
	 $Z_{n}(f)[\varphi]^\pm\in {\rm C}(\overline{\Omega^\pm})$ for~${n=1,\, 2}$,  with 
		\begin{equation}\label{eq:lin1}
		\begin{aligned}
			\set{Z_{1}(f)[\varphi]}^\pm\circ\Xi &= B_{1}(f)[\varphi]\mp \frac{f'}{\omega^2}\varphi,\\
			\set{Z_{2}(f)[\varphi]}^\pm \circ\Xi  &= B_{2}(f)[\varphi]\pm \frac{1}{\omega^2}\varphi.
		\end{aligned}
		\end{equation}
	In order to derive similar properties for $Z_{n}(f)[\varphi]$,~${n=3,\, 4,} $ we use integration by parts to  deduce that
		\begin{equation*}
\left.
\begin{array}{lll}
Z_5(f)[\varphi']&=Z_{1}(f)[f'\varphi]-Z_{3}(f)[\varphi]-2Z_{4}(f)[f'\varphi],\\[1ex]
Z_6(f)[\varphi']&=Z_{2}(f)[f'\varphi]+Z_{3}(f)[f'\varphi]-2Z_{4}(f)[\varphi]
\end{array}
\right\}	\qquad\text{in $(\Ss\times\RR)\setminus\Gamma,$}	
		\end{equation*}
		 respectively
		\begin{equation*}
		\left.
\begin{array}{lll}
B_5(f)[\varphi']&=B_{1}(f)[f'\varphi]-B_{3}(f)[\varphi]-2B_{4}(f)[f'\varphi],\\[1ex]
B_6(f)[\varphi']&=B_{2}(f)[f'\varphi]+B_{3}(f)[f'\varphi]-2B_{4}(f)[\varphi]
\end{array}
\right\}	\qquad\text{in ${\rm C}(\Ss).$}		
		\end{equation*}
		Since  $Z_n(f)[\varphi'] \in {\rm C}(\Ss\times\RR),$   $n=5,\, 6,$  the latter formulas combined with \eqref{eq:Z_pm1} and~\eqref{eq:lin1} (with~$\varphi$ replaced by $f'\varphi$) yield
		\begin{equation}\label{eq:lin2}
		\begin{aligned}
			\set{Z_{3}(f)[\varphi]+2Z_{4}(f)[f'\varphi]}^\pm\circ\Xi &= B_{3}(f)[\varphi]+2B_{4}(f)[f'\varphi]\mp \frac{f'^2}{\omega^2}\varphi,\\
			\set{Z_{3}(f)[f'\varphi]-2Z_{4}(f)[\varphi]}^\pm\circ\Xi &= B_{3}(f)[f'\varphi]-2B_{4}(f)[\varphi]\mp \frac{f'}{\omega^2}\varphi.
		\end{aligned}
		\end{equation}	
	We now  replace $\varphi$ by $\varphi/\omega^2$ in \eqref{eq:lin2}$_1$ and by $(f'\varphi )/\omega^2$ in \eqref{eq:lin2}$_2$ to obtain, after taking the sum of the two relations,
	that 
		\begin{equation}\label{eq:lin3}
		\begin{aligned}
			\set{Z_{3}(f)[\varphi]}^\pm\circ\Xi &= B_{3}(f)[\varphi]\mp \frac{2f'^2}{\omega^4}\varphi,\\
			\set{Z_{4}(f)[\varphi]}^\pm \circ\Xi  &= B_{4}(f)[\varphi]\pm \frac{f'-f'^3}{2\omega^4}\varphi,
		\end{aligned}
		\end{equation}
		with \eqref{eq:lin3}$_2$ being a direct consequence of \eqref{eq:lin3}$_1$  and \eqref{eq:lin2}$_2$.
		This proves \eqref{eq:Z_pm2} and completes the proof.
	\end{proof}

		As a further  preparatory result we establish the following lemma which is related to the logarithmic term in $\mathcal{U}$, see \eqref{eq:GPi} and \eqref{eq:UPdef}. 
	
\begin{Lemma}\label{Lem:C2} Given~${\varphi \in\Lp{2}{\Ss}}$,   let $	Z_0(f)[\varphi]:(\Ss\times\RR)\setminus\Gamma\to\RR$ 
be given by
\begin{equation}\label{eq:def_Z0}
			Z_0(f)[\varphi](x) := \frac{1}{2\pi}\int_{-\pi}^\pi \ln \Big(\sin^2\Big(\frac{r_1}{2}\Big)+\sinh^2\Big(\frac{r_2}{2}\Big)\Big)\varphi(s)\dx{s}.
		\end{equation}
Then, 	$Z_0(f)[\varphi]\in {\rm C}^\infty((\Ss\times\RR)\setminus\Gamma)$ and~${\nabla \big( Z_0(f)[\varphi]\big) =( Z_1(f)[\varphi],  Z_2(f)[\varphi])}$.
Additionally, if~${\varphi\in \Hp{1}{\Ss},}$  we have $Z_0(f)[\varphi]\in{\rm C}(\Ss\times\RR)$  and $Z_0(f)[\varphi]^\pm \in\rmC^{1}(\overline{\Omega^\pm},\RR^2)$, with 
\begin{equation}\label{eq:Z_0}
			\set{Z_{0}(f)[\varphi]}^\pm\circ\Xi = B_{0}(f)[\varphi],
		\end{equation}
		where $B_0(f)$ is defined in \eqref{eq:B0_alt} (see also \eqref{eq:GPi}).
\end{Lemma}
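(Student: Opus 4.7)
The plan is to exploit the direct relationship $Z_0(f)[\varphi](x)=-2\int G_\pi(r)\varphi(s)\,ds$ with the periodic Laplace kernel from~\eqref{eq:GPi}, together with the already-established Lemma~\ref{Lem:C1} for $Z_1,\dots,Z_4$. First, for $\varphi\in\Lp{2}{\Ss}$ and $x_0\in(\Ss\times\RR)\setminus\Gamma$, the vector $r=x-(s,f(s))$ stays uniformly bounded away from $0$ on a neighborhood of $x_0$, so the integrand and all its $x$-derivatives are bounded uniformly in $s\in\Ss$; the standard theorem on differentiation of parameter integrals then yields $Z_0(f)[\varphi]\in\rmC^\infty((\Ss\times\RR)\setminus\Gamma)$ with termwise differentiation permitted. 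Since $\mcP^k=\partial_k G_\pi$ by~\eqref{eq:UP}, and using the algebraic identities
\begin{equation*}
\sin^2\tfrac{y}{2}=\tfrac{t_{[y]}^2}{1+t_{[y]}^2}\qquad\text{and}\qquad\sinh^2\tfrac{z}{2}=\tfrac{T_{[z]}^2}{1-T_{[z]}^2}
\end{equation*}
to reconcile the two forms of the kernel in~\eqref{eq:GPi} and~\eqref{eq:UPdef}, I identify the partial derivatives with the kernels of $Z_1,Z_2$, giving $\nabla Z_0(f)[\varphi]=(Z_1(f)[\varphi],Z_2(f)[\varphi])$.

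Second, for $\varphi\in\Hp{1}{\Ss}\hookrightarrow\Lp{\infty}{\Ss}$, continuity of $Z_0(f)[\varphi]$ on all of $\Ss\times\RR$ follows by dominated convergence: for $x$ in a bounded neighborhood of any $x_0\in\Ss\times\RR$ and $s$ close to $x_1$, one has a uniform estimate $|\ln(\sin^2(r_1/2)+\sinh^2(r_2/2))|\leq C(1+|\ln|s-x_1||)$, which is a fixed $L^1$ majorant (independent of $x$), and together with the boundedness of $\varphi$ this permits passage to the limit. No jump appears across $\Gamma$ because the logarithmic kernel is only weakly singular, in contrast with the Cauchy-type kernels underlying $Z_1,\dots,Z_4$ that produce the trace jumps $\pm\varphi/\omega^2$ etc.\ in Lemma~\ref{Lem:C1}. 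Combined with the first step, this gives $Z_0(f)[\varphi]^\pm\in\rmC(\overline{\Omega^\pm})\cap\rmC^\infty(\Omega^\pm)$ with $\nabla Z_0(f)[\varphi]^\pm=(Z_1(f)[\varphi]^\pm,Z_2(f)[\varphi]^\pm)$ in $\Omega^\pm$; since the right-hand side extends continuously to $\overline{\Omega^\pm}$ by Lemma~\ref{Lem:C1}, one upgrades to $Z_0(f)[\varphi]^\pm\in\rmC^1(\overline{\Omega^\pm})$.

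Finally, for the trace identity~\eqref{eq:Z_0}, I would evaluate the kernel at $x=\Xi(\xi)$ (so that $r=(\xi-s,f(\xi)-f(s))$), rewrite the logarithm via the identity above, and perform the change of variables $s\mapsto\xi-s$ (valid by $2\pi$-periodicity); the definition~\eqref{eq:def_Z0} of $Z_0(f)[\varphi](\Xi(\xi))$ then reduces exactly to~\eqref{eq:B0_alt} for $B_0(f)[\varphi](\xi)$. The only delicate part of the argument is the continuity across $\Gamma$ in the second step — the statement that the periodic logarithmic single-layer potential of a Lipschitz curve is continuous despite having a log-singular kernel — but this is handled cleanly by the explicit $L^1$ majorant above.
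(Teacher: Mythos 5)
Your proposal is correct and follows essentially the same route as the paper: smoothness off $\Gamma$ by differentiation under the integral sign, $\rmC^1(\overline{\Omega^\pm})$ by combining $\nabla Z_0(f)[\varphi]=(Z_1(f)[\varphi],Z_2(f)[\varphi])$ with the continuous extension of $Z_1,Z_2$ up to $\overline{\Omega^\pm}$ from Lemma~\ref{Lem:C1}, and continuity together with the trace identity~\eqref{eq:Z_0} across $\Gamma$ by dominated convergence applied to the weakly singular logarithmic kernel. The only cosmetic point is that the stated majorant $C(1+|\ln|s-x_1||)$ still depends on $x_1$; performing the change of variables $s\mapsto x_1-s$ first produces the genuinely fixed majorant $|\ln(\sin^2(s/2))|+C$ with the $x$-dependence moved into the bounded factor and into $\varphi$, after which your dominated-convergence argument goes through verbatim.
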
	
\begin{proof}
Arguing as in the proof of Theorem~\ref{Thm:FT_unique}, we obtain that ~$Z_0(f)[\varphi]^\pm\in  {\rm C}^\infty(\Omega^\pm)$, with gradient~${\nabla \big( Z_0(f)[\varphi]\big) =( Z_1(f)[\varphi],  Z_2(f)[\varphi])}$.
Since  $Z_n(f)[\varphi]^\pm\in  {\rm C}^1(\overline{\Omega^\pm})$, $n=1,\, 2$, for~${\varphi\in \Hp{1}{\Ss}}$, cf. Lemma~\ref{Lem:C1}, we deduce that $Z_0(f)[\varphi]^\pm \in\rmC^{1}(\overline{\Omega^\pm},\RR^2)$.
Lebesgue's dominated convergence theorem  ensures moreover that both one-sided limits of $Z_0(f)[\varphi]$ in $\Xi(\xi)$  exist for all $\xi\in\Ss$ and coincide with $B_{0}(f)[\varphi](\xi)$.
 This proves~\eqref{eq:Z_0} and the continuity property $Z_0(f)[\varphi]\in{\rm C}(\Ss\times\RR)$.
\end{proof}

Related to the asymptotic behavior of the operators defined above,  we establish the following lemma.

\begin{Lemma}\label{Lem:C3} 
Given~${\varphi \in\Lp{2}{\Ss}}$,     for $ x_2 \to\pm\infty$ we have
\begin{align}
         &Z_5(f)[\varphi]^\pm\longrightarrow 0,\label{eq:AB5}\\[1ex]
		 &Z_6(f)[\varphi]^\pm\mp x_2\langle \varphi\rangle\longrightarrow \mp \langle f\varphi\rangle,\label{eq:AB6} \\[1ex]
		&Z_0(f)[\varphi]^\pm\mp x_2\langle \varphi\rangle\longrightarrow \mp\langle f\varphi\rangle-\langle \varphi\rangle\ln 4.\label{eq:AB0}
		\end{align}
\end{Lemma}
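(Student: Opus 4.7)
The plan is to expand each integrand as a limiting value plus a remainder that vanishes uniformly in $s\in[-\pi,\pi]$ as $x_2\to\pm\infty$, and then to integrate against $\varphi\in\Lp{2}{\Ss}\hookrightarrow\Lp{1}{\Ss}$. Uniformity in $s$ is available because $f\in\Hp{3}{\Ss}\hookrightarrow\rmC(\Ss)$ is bounded, so $r_2=x_2-f(s)\to\pm\infty$ uniformly in $s$ as $x_2\to\pm\infty$.

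For \eqref{eq:AB5}, the AM--GM inequality $\ts{r_1}^2+\T{r_2}^2\geq 2|\ts{r_1}\T{r_2}|$ gives $|\ts{r_1}|/(\ts{r_1}^2+\T{r_2}^2)\leq 1/(2|\T{r_2}|)$, so the kernel of $Z_5$ is bounded uniformly in $s$ by $|r_2|(1-\T{r_2}^2)/(2|\T{r_2}|)$, which decays exponentially as $|r_2|\to\infty$; passing to the limit under the integral yields $Z_5(f)[\varphi]^\pm\to 0$.

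For \eqref{eq:AB6}, I would rely on the algebraic identities
\[
r_2\,\frac{\T{r_2}(1+\ts{r_1}^2)}{\ts{r_1}^2+\T{r_2}^2}-r_2=r_2\,\frac{(1-\T{r_2})(\T{r_2}-\ts{r_1}^2)}{\ts{r_1}^2+\T{r_2}^2},\quad
r_2\,\frac{\T{r_2}(1+\ts{r_1}^2)}{\ts{r_1}^2+\T{r_2}^2}+r_2=r_2\,\frac{(1+\T{r_2})(\T{r_2}+\ts{r_1}^2)}{\ts{r_1}^2+\T{r_2}^2},
\]
obtained by expanding and collecting terms. Since $r_2(1\mp\T{r_2})=2r_2/(e^{\pm r_2}+1)\to 0$ as $r_2\to\pm\infty$ exponentially, and $\T{r_2}^2\geq 1/2$ forces $(1+\ts{r_1}^2)/(\ts{r_1}^2+\T{r_2}^2)\leq 3$ for $|r_2|$ large (by separate treatment of $\ts{r_1}^2\gtrless 1/2$), the correction terms vanish uniformly in $s$, leaving $\pm r_2\varphi(s)=\pm(x_2-f(s))\varphi(s)$ as the leading contribution. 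Integrating then gives $Z_6(f)[\varphi]^\pm=\pm x_2\langle\varphi\rangle\mp\langle f\varphi\rangle+o(1)$.

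For \eqref{eq:AB0}, I would use the identity $\sin^2(r_1/2)+\sinh^2(r_2/2)=\tfrac{1}{2}(\cosh r_2-\cos r_1)$ and factor $e^{|r_2|}/2$ out of $\cosh r_2-\cos r_1$ to obtain
\[
\ln\!\big(\sin^2(r_1/2)+\sinh^2(r_2/2)\big)=|r_2|-\ln 4+\ln\!\big(1-2e^{-|r_2|}\cos r_1+e^{-2|r_2|}\big),
\]
the last summand vanishing uniformly in $r_1$ as $|r_2|\to\infty$. Since $|r_2|=\pm(x_2-f(s))$ for $x_2\to\pm\infty$, integrating against $\varphi$ delivers \eqref{eq:AB0}. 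The analysis is otherwise routine; the only subtlety I anticipate is the uniformity in $s\in[-\pi,\pi]$ of each expansion, which for $Z_6$ rests on the uniform bound $(1+\ts{r_1}^2)/(\ts{r_1}^2+\T{r_2}^2)\leq 3$ that prevents the blow-up of $\ts{r_1}=\tan(r_1/2)$ near $r_1=\pi+2\pi\ZZ$ from spoiling the argument.
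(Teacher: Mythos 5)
Your proposal is correct and follows essentially the same route as the paper. For \eqref{eq:AB5} the paper simply cites Lebesgue's dominated convergence theorem (your AM--GM estimate is one way to justify the domination); for \eqref{eq:AB6} the paper rewrites the left-hand side exactly as your first identity, obtaining $\frac{1}{2\pi}\int_{-\pi}^\pi r_2(1\mp\T{r_2})\frac{\T{r_2}\mp\ts{r_1}^2}{\ts{r_1}^2+\T{r_2}^2}\varphi\,\dx{s}$ and passing to the limit by dominated convergence; and for \eqref{eq:AB0} the paper produces the remainder integrand $\ln\big(4e^{\mp r_2}\sin^2(r_1/2)+e^{\mp 2r_2}-2e^{\mp r_2}+1\big)$, which coincides with your $\ln(1-2e^{-|r_2|}\cos r_1+e^{-2|r_2|})$. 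The only cosmetic difference is that you argue for uniformity in $s$ where the paper invokes dominated convergence directly; both are valid.
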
	
\begin{proof} The property \eqref{eq:AB5} is a simple consequence of Lebesgue's dominated convergence theorem, which implies, via 
\begin{align*}
Z_6(f)[\varphi]^\pm(x)\mp x_2\langle \varphi\rangle\pm \langle f\varphi\rangle=\frac{1}{ 2\pi}\int_{-\pi}^\pi r_2(1\mp \T{r_2})\frac{\T{r_2}\mp\ts{r_1}^2}{\ts{r_1}^2+\T{r_2}^2} \varphi(s) \dx{s},
\qquad x\in\Omega^\pm,
\end{align*}
  also \eqref{eq:AB6}. 
  Finally, with respect to \eqref{eq:AB0}, we note that  since 
\[
-\frac{1}{ 2\pi}\int_{-\pi}^\pi \ln (4^{\mp1}e^{r_2})\varphi\dx{s}=\langle f\varphi\rangle+(\pm \ln 4-x_2)\langle \varphi\rangle,\qquad x\in\Omega^\pm,
\]
 Lebesgue's dominated convergence theorem yields
\begin{align*}
&Z_0(f)[\varphi]^\pm(x)\pm\big[\langle f\varphi\rangle+(\pm \ln 4-x_2)\langle \varphi\rangle\big]\\
&=\frac{1}{ 2\pi}\int_{-\pi}^\pi  \Big[\ln\Big(\sin^2\Big(\frac{r_1}{2}\Big)+\frac{e^{r_2}-2+e^{-r_2}}{4}\Big)\mp \ln (4^{\mp1}e^{r_2})\Big]\varphi(s)\dx{s}\\
&=\frac{1}{ 2\pi}\int_{-\pi}^\pi \ln\Big(4e^{\mp r_2}\sin^2\Big(\frac{r_1}{2}\Big)+ e^{\mp 2r_2 }-2e^{\mp r_2}+1\Big) \varphi(s)\dx{s}\underset{x_2\to\pm\infty}\longrightarrow0.\qedhere
\end{align*}
\end{proof}

We are now in a position to study the behavior of the velocity  $v$ defined in~\eqref{eq:vq}-\eqref{eq:vqdef} close to the interface and in the far field (under the assumptions of Theorem~\ref{Thm:FT_unique}).

	\begin{Lemma}\label{Lem:v_bd}
	We have  $v\in\mathrm{C}(\Ss\times\RR,\RR^2)$, $v^\pm\in\mathrm{C}^{1}(\overline{\Omega^\pm},\RR^2)$, and 
\begin{align}
&\mu\big([\nabla v+(\nabla v)^\top]\wt\nu\big)\circ\Xi=\omega^{-1}(G\cdot\tau)\tau \qquad\text{on $\Ss$},\label{eq:stressv}\\[1ex]
&\text{$v^\pm (x)\longrightarrow \Big(\mp \frac{\langle f G_1\rangle}{2\mu},0\Big)$ \qquad for $x_{2}\to \pm\infty$.}\label{eq:ascond}
\end{align}	
	\end{Lemma}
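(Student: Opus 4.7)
The plan is to express the integrals in \eqref{eq:vqdef} in terms of the auxiliary operators $Z_0, Z_5, Z_6$ and their derivatives $Z_1,\ldots,Z_4$ from Appendix~\ref{Sec:C}, and then to read off every claim directly from Lemma~\ref{Lem:C1}, Lemma~\ref{Lem:C2}, and Lemma~\ref{Lem:C3}. Inspecting the matrix \eqref{eq:UPdef} entry by entry (with $r=x-(s,f(s))$), the logarithmic piece of $\mathcal{U}^{k}_{i}(r)$ produces $Z_0(f)$, while the two $r_2$-weighted rational entries are precisely the kernels of $Z_5(f)$ and $Z_6(f)$. Matching prefactors yields
\begin{equation*}
v_G=\frac{1}{4\mu}\begin{pmatrix} Z_0(f)[G_1]+Z_6(f)[G_1]-Z_5(f)[G_2]\\[1ex] -Z_5(f)[G_1]+Z_0(f)[G_2]-Z_6(f)[G_2]\end{pmatrix}.
\end{equation*}
Since $G\in\Hp{1}{\Ss}^{2}$, Lemma~\ref{Lem:C2} gives $Z_0(f)[G_k]\in\rmC(\Ss\times\RR)$ and Lemma~\ref{Lem:C1} gives $Z_5(f)[G_k], Z_6(f)[G_k]\in\rmC(\Ss\times\RR)$, whence $v\in\rmC(\Ss\times\RR,\RR^{2})$.

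For the $\rmC^{1}$-regularity up to $\Gamma$ I would differentiate under the integral sign using \eqref{eq:deriv_U}, which displays every $\partial_{j}\mathcal{U}^{k}_{i}(r)$ as a linear combination of the kernels of $Z_1(f),\ldots,Z_4(f)$. A sample computation gives $\partial_{1}v_{G,1}=\tfrac{1}{4\mu}\bigl[(Z_1-2Z_4)(f)[G_1]+Z_3(f)[G_2]\bigr]$, and the remaining three partial derivatives admit entirely analogous representations. By Lemma~\ref{Lem:C1} each of these extends continuously up to $\Gamma$, so $v^{\pm}\in\rmC^{1}(\overline{\Omega^{\pm}},\RR^{2})$.

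To derive \eqref{eq:stressv} I would insert the boundary-value formulas \eqref{eq:Z_pm2} into the representations above and simplify using $1+f'^{2}=\omega^{2}$. Each of the four jumps $[\partial_{j}v_{i}]\circ\Xi$ then turns out to be a scalar multiple of $G\cdot\tau=\omega^{-1}(G_1+f'G_2)$; explicitly, $[\partial_{1}v_{1}]\circ\Xi=-f'(G\cdot\tau)/(\mu\omega^{3})$, $[\partial_{2}v_{1}]\circ\Xi=(G\cdot\tau)/(\mu\omega^{3})$, $[\partial_{1}v_{2}]\circ\Xi=-f'^{2}(G\cdot\tau)/(\mu\omega^{3})$, and $[\partial_{2}v_{2}]\circ\Xi=f'(G\cdot\tau)/(\mu\omega^{3})$. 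Contracting the resulting symmetrized jump matrix against $\tnu\circ\Xi=\omega^{-1}(-f',1)$ and applying $1+f'^{2}=\omega^{2}$ once more, the eight signed contributions collapse into $\omega^{-2}(G\cdot\tau)(1,f')/\mu=\omega^{-1}(G\cdot\tau)\tau/\mu$, which is precisely \eqref{eq:stressv} after multiplication by $\mu$. This algebraic collapse is the main technical obstacle: although every individual identity is elementary, exposing the purely tangential structure of the right-hand side requires careful bookkeeping of all eight boundary contributions produced by \eqref{eq:Z_pm2}.

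Finally, \eqref{eq:ascond} follows from Lemma~\ref{Lem:C3}. Because $\langle G_1\rangle=0$, the linearly growing parts of $Z_0(f)[G_1]^{\pm}$ and $Z_6(f)[G_1]^{\pm}$ vanish and both limits equal $\mp\langle fG_1\rangle$, while $Z_5(f)[G_2]^{\pm}\to 0$; inserting these into the first row of the decomposition above gives $v_{G,1}^{\pm}\to\mp\langle fG_1\rangle/(2\mu)$. In the second row the divergent terms $\pm x_2\langle G_2\rangle$ appearing in $Z_0(f)[G_2]^{\pm}$ and $Z_6(f)[G_2]^{\pm}$ cancel against each other, leaving $v_{G,2}^{\pm}\to-\langle G_2\rangle\ln 4/(4\mu)$, which is exactly neutralized by the constant $(0,\langle G_2\rangle\ln 4/(4\mu))$ added to $v_G$ in \eqref{eq:vq}. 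This proves $v^{\pm}(x)\to(\mp\langle fG_1\rangle/(2\mu),0)$ as $x_2\to\pm\infty$ and completes the plan.
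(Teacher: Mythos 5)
Your proposal is correct and follows essentially the same route as the paper: you decompose $v_G$ via $Z_0,Z_5,Z_6$ exactly as in the paper's equation~\eqref{eq:vG_z}, differentiate under the integral sign (equivalently, use $\nabla Z_0=(Z_1,Z_2)$ together with the gradient formulas for $Z_5,Z_6$), extract the jumps $[\nabla v_G]\circ\Xi=\frac{G\cdot\tau}{\mu\omega^3}\bigl(\begin{smallmatrix}-f'&1\\-f'^2&f'\end{smallmatrix}\bigr)$ from~\eqref{eq:Z_pm2}, contract against $\tnu$, and read the asymptotics off Lemma~\ref{Lem:C3} using $\langle G_1\rangle=0$ and the cancellation with the additive constant in~\eqref{eq:vq}. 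All stated jump values, the contraction, and the limit computation check out against the paper's version.
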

	\begin{proof} 	
Recalling \eqref{eq:vqdef}, we write
		\begin{equation}\label{eq:vG_z}
			v_G=\frac{1}{4\mu}
			\begin{pmatrix}
			\big(Z_0(f)+Z_6(f)\big)[G_1]-Z_5(f)[G_2]\\[1ex]
			\big(Z_0(f)-Z_6(f)\big)[G_2]-Z_5(f)[G_1]
			\end{pmatrix}^\top\qquad\text{in $(\Ss\times\RR)\setminus\Gamma,$}
	\end{equation}
	and Lemma \ref{Lem:C1} and Lemma~\ref{Lem:C2} ensure that indeed~$v_G\in\mathrm{C}(\Ss\times\RR,\RR^2)$ and
	 \begin{equation}\label{eq:v_g}
			\big\{v_G\big\}^\pm\circ\Xi=\frac{1}{4\mu}
			\begin{pmatrix}
			\big(B_0(f)+B_6(f)\big)[G_1]-B_5(f)[G_2]\\[1ex]
			\big(B_0(f)-B_6(f)\big)[G_2]-B_5(f)[G_1]
			\end{pmatrix}^\top.
	\end{equation}
	
Noticing also that
\begin{equation*}
\left.
\begin{array}{lll}
{\nabla \big( Z_5(f)[\varphi]\big) =( -Z_3(f)[\varphi],  Z_1(f)[\varphi]- 2Z_4(f)[\varphi])}\\
{\nabla \big( Z_6(f)[\varphi]\big) =( -2Z_4(f)[\varphi],  Z_2(f)[\varphi]+Z_3(f)[\varphi])}
\end{array}
\right\}\qquad\text{in $\Ss\times\RR\setminus\Gamma$,}
\end{equation*}
we infer from Lemma~\ref{Lem:C1} and Lemma~\ref{Lem:C2} that $v_G^\pm\in\rmC^{1}(\overline{\Omega^\pm},\RR^2)$ and the formula \eqref{eq:Z_pm2} leads us to
		 \begin{equation*}
		 [\nabla v_G]\circ\Xi=\begin{pmatrix}
		 [\partial_1 v_{G,1}]\circ\Xi&[\partial_2 v_{G,1}]\circ\Xi\\[1ex]
		 [\partial_1 v_{G,2}]\circ\Xi&[\partial_2 v_{G,2}]\circ\Xi
		 \end{pmatrix}
		 =\frac{G\cdot \tau}{\mu \omega^3}
		 \begin{pmatrix}
	-f'&1\\[1ex]
	-f'^2&f'
		 \end{pmatrix},
		 \end{equation*}
		 hence
		  \begin{equation*}
		\mu \big([\nabla v_G+(\nabla v_G)^\top]\wt\nu\big)\circ\Xi=\omega^{-1}(G\cdot\tau)\tau,
		 \end{equation*}
 and  \eqref{eq:stressv} follows.
 
 Moreover, in view of Lemma~\ref{Lem:C3}, we have
 \[
 v_G^\pm (x)\longrightarrow \Big(\mp \frac{\langle fG_1\rangle}{2\mu},- \frac{\langle G_2\rangle\ln 4}{4\mu}\Big) \qquad \text{for $x_{2}\to \pm\infty$,}
 \]
 which proves \eqref{eq:ascond}.
\end{proof}

	The following observation,  together with~\eqref{eq:v_g} is used when formulating the Stokes problem \eqref{eq:STOKES} as an evolution problem for $f$, cf. \eqref{eq:ev_eq},
	 as it provides an expression for the trace of $v_G$ on $\Gamma$,  in the particular case when~$G=F'$ for some function $F=(F_1,F_2)$, which involves the function $F$
	(and not its derivative), see \eqref{eq:v_z} below.
	
	\begin{Remark}\label{R:C1}
	 Assume that~$G=F'$ for some function $F=(F_1,F_2)\in\Hp{2}{\Ss}$.
Then, observing that ~${[s\mapsto \mcU(x-(s,f(s)))]:\Ss\to\RR^{2\times2}}$ is continuously differentiable, integration by parts  in  \eqref{eq:vqdef} leads to  the following representation
	\begin{equation}\label{eq:vqdef_alt}
		 {v_G^\pm}(x) =\frac{1}{\mu}\int_{-\pi}^\pi F(s)\left(\partial_{1}\begin{pmatrix}
			{\mcU^1}\\
			{\mcU^2}
		\end{pmatrix}(r)+f'(s)\partial_{2}\begin{pmatrix}
			{\mcU^1}\\
			{\mcU^2}
		\end{pmatrix}(r)\right)\dx{s}, \qquad x\in \Omega^\pm.
	\end{equation}
	 In view of   \eqref{eq:deriv_U} and  \eqref{eq:defZ}, we conclude from Lemma~\ref{Lem:C1} that 
		 \begin{equation}\label{eq:v_z}
			\big\{v_G\big\}^\pm\circ\Xi=\frac{1}{4\mu}
			\begin{pmatrix}
			\big(B_1-2B_4\big)(f)[F_1-f' F_2]+\big(2B_2+B_3)(f)[f' F_1]+B_3(f)[F_2]\\[1ex]
			 B_1(f)[F_2-f' F_1]+B_3(f)[F_1-f' F_2]+2B_4(f)[f' F_1+F_2],
			\end{pmatrix}^\top.
		\end{equation}
	\end{Remark}

	Finally, we  consider the pressure $q$.
	
	\begin{Lemma}\label{Lem:q_bd}
	We have $q^\pm\in\rmC(\overline{\Omega^\pm})$ and 
		\begin{align*}
			&[q]\circ\Xi=-\omega^{-1}G\cdot \nu \qquad\text{on $\Ss$},\\[1ex]
			&q^\pm (x)\longrightarrow  \mp\frac{\langle G_2\rangle}{2} \quad \text{for $x_{2}\to \pm\infty.$}
		\end{align*}
	\end{Lemma}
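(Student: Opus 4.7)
The plan is to follow the same template used in Lemma~\ref{Lem:v_bd}: reduce the pressure $q=q_G$ to the operators $Z_1,Z_2$ of Lemma~\ref{Lem:C1}, then read off continuity, the jump, and the far-field behaviour. Inspecting \eqref{eq:UPdef} and comparing with \eqref{eq:defZ}, I would first write
\[
q_G=-\tfrac12 Z_1(f)[G_1]-\tfrac12 Z_2(f)[G_2]\qquad\text{in }(\Ss\times\RR)\setminus\Gamma,
\]
so that all three claims for $q=q_G$ follow from corresponding claims for $Z_1(f)[G_1]$ and $Z_2(f)[G_2]$.

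Since $G\in\Hp{1}{\Ss}^2$, the last statement of Lemma~\ref{Lem:C1} yields $Z_i(f)[G_i]^\pm\in\rmC(\overline{\Omega^\pm})$ for $i=1,2$, hence $q^\pm\in\rmC(\overline{\Omega^\pm})$. For the jump condition I would use the trace formula \eqref{eq:Z_pm2}: taking the differences $\{Z_i\}^+-\{Z_i\}^-$ the regular $B_i(f)$-parts cancel and one is left with
\[
[Z_1(f)[G_1]]\circ\Xi=-\frac{2f'}{\omega^2}G_1,\qquad [Z_2(f)[G_2]]\circ\Xi=\frac{2}{\omega^2}G_2,
\]
so that $[q]\circ\Xi=\omega^{-2}(f'G_1-G_2)$. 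Recalling $\nu=\omega^{-1}(-f',1)^\top$ from \eqref{nutauzomega}, we have $-\omega^{-1}G\cdot\nu=\omega^{-2}(f'G_1-G_2)$, which matches.

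For the far-field limit, I would work directly from the explicit kernels in \eqref{eq:defZ}, using $r=(r_1,r_2)=(x_1-s,x_2-f(s))$. As $x_2\to\pm\infty$, uniformly in $s\in(-\pi,\pi)$, one has $T_{[r_2]}\to\pm1$, hence $1-T_{[r_2]}^2\to 0$ exponentially and $t_{[r_1]}^2+T_{[r_2]}^2\geq 1/2$ for $|x_2|$ large. For $Z_1$, combining $|t_{[r_1]}|(t_{[r_1]}^2+1/2)^{-1}\leq 1/\sqrt{2}$ with $1-T_{[r_2]}^2\to 0$ gives a uniform bound in $s$ that tends to zero, so $Z_1(f)[G_1]^\pm\to 0$ by dominated convergence. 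For $Z_2$, the integrand $T_{[r_2]}(1+t_{[r_1]}^2)/(t_{[r_1]}^2+T_{[r_2]}^2)$ is bounded by a constant independent of $s$ for $|x_2|$ large and converges pointwise to $\pm 1$, so dominated convergence yields $Z_2(f)[G_2]^\pm\to\pm\langle G_2\rangle$. Inserting these into the formula for $q_G$ gives $q^\pm\to \mp\langle G_2\rangle/2$, as required.

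No real obstacle is expected: the only mildly delicate step is the uniform domination for the convergence of $Z_2(f)[G_2]$ (one must keep the denominator away from zero, which is automatic once $|x_2|$ exceeds $\|f\|_\infty$ by a definite amount). The arguments use only the explicit kernel structure and the already established Lemmas~\ref{Lem:C1}-\ref{Lem:C3}, together with $\nu=\omega^{-1}(-f',1)^\top$; the hypothesis $\langle G_1\rangle=0$ is not needed in this lemma, as the decay of the $Z_1$-kernel alone suffices.
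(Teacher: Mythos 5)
Your proposal is correct and follows essentially the same route as the paper: decompose $q_G=-\tfrac12\big(Z_1(f)[G_1]+Z_2(f)[G_2]\big)$, read off continuity and the jump from Lemma~\ref{Lem:C1} (specifically \eqref{eq:Z_pm2}), and get the far-field limit by dominated convergence with the explicit bounds you describe. Your side remark that $\langle G_1\rangle=0$ is not used here is accurate (it is only needed for the far-field of the velocity via $Z_0$ and $Z_6$ in Lemma~\ref{Lem:C3}).
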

	\begin{proof}
		Since
		\[
q=-\frac{Z_1(f)[G_1]+Z_2(f)[G_2]}{2},		
		\]
		Lemma~\ref{Lem:C1} yields $q^\pm\in\rmC^1(\overline{\Omega^\pm})$ together with $[q]\circ\Xi=-\omega^{-1}G\cdot \nu.$
		Moreover, a simple application of Lebesgue's dominated convergence theorem shows that 
		\[
q^\pm (x)\longrightarrow \mp\frac{\langle G_2\rangle}{2} \qquad \text{for $x_{2}\to \pm\infty,$}		
		\]
		which completes the proof.
	\end{proof}

\bibliographystyle{siam}
\bibliography{references}
\end{document}